\documentclass[twoside,11pt]{article}

\usepackage[english]{babel}
\usepackage{amsmath}
\usepackage{amsthm}
\usepackage{amsfonts}
\usepackage{amssymb}
\usepackage{graphicx}
\usepackage{colortbl,dcolumn}
\usepackage{paralist}  

\allowdisplaybreaks[4]
       \newtheorem{lemma}{\bf Lemma}[section]
       \newtheorem{theorem}{\bf Theorem}[section]
       \newtheorem{proposition}{\bf Proposition}[section]

       \newtheorem{remark}{\bf Remark}[section]

       \numberwithin{equation}{section}

\newcommand{\R}{\mathbb{R}}

\newcommand{\be}{\begin{equation}}
\newcommand{\ee}{\end{equation}}

\begin{document}
\title{{\Large Nonlinear stability of composite waves of traveling wave and rarefaction wave for a parabolic-hyperbolic system arising from chemotaxis}
\footnotetext{\small *Corresponding author.}
\footnotetext{\small E-mail addresses: liust533@nenu.edu.cn (S. Liu), lijy645@nenu.edu.cn (J. Li)}
}
\author{{Sitong Liu$^{1}$, Jingyu Li$^{2, *}$}\\[2mm]
\small\it $^1$School of Mathematics and Statistics, Jiangxi Normal University,\\
		\small\it    Nanchang, 330022, P.R.China \\
		\small\it $^2$School of Mathematics and Statistics, Northeast Normal University,\\
\small\it   Changchun 130024, PR China}
\date{ }
\maketitle

\begin{quote}
\small
\emergencystretch=2em
\textbf{Abstract:}
We study the nonlinear stability of a composite wave consisting of a traveling wave and a
rarefaction wave for a
parabolic-hyperbolic system arising from chemotaxis. We prove that if the initial value is a small $H^1$-type perturbation of  composite wave, then the system admits a global solution that converges
toward the composite wave with an absolutely continuous shift. The proof combines the weighted
relative-entropy mechanism for viscous shocks with the energy structure of
rarefaction waves. A key ingredient is the inclusion of a rarefaction modulation factor
in the weighted relative entropy; its derivatives combine with the
terms generated by the rarefaction profile to produce a rarefaction dissipation.
Moreover, the spatial separation of the two waves yields time-integrable
interaction errors caused by the non-exact superposition. Since diffusion acts
only on the density component, the full $H^1$ estimate is closed by exploiting
the coupling structure of the system to recover the missing dissipation of the
hyperbolic component.

\indent \textbf{Key words}: Chemotaxis; parabolic-hyperbolic system; composite wave of traveling wave and rarefaction wave; nonlinear stability; relative entropy.

\indent \textbf{AMS(2020) Subject Classification}: 35B35, 35L65, 35Q92, 92C17

\end{quote}

\section{Introduction}\label{introduction}
In this paper, we study the large-time behavior of solutions to the Cauchy problem for the
one-dimensional parabolic-hyperbolic system
\begin{equation}\label{1.1}
\begin{cases}
n_t-(nq)_x=n_{xx},& x\in\mathbb R,\ t>0,\\
q_t-n_x=0,& x\in\mathbb R,\ t>0,
\end{cases}
\end{equation}
subject to
\begin{equation}\label{i d}
(n,q)(x,0)=(n_0,q_0)(x)\longrightarrow(n_\pm,q_\pm)
\qquad\mbox{as }x\to\pm\infty.
\end{equation}
The system \eqref{1.1} is derived from the PDE--ODE hybrid chemotaxis model with
logarithmic sensitivity
\begin{equation}\label{chemotaxis model}
\begin{cases}
n_t=n_{xx}-\big(n(\ln c)_x\big)_x,\\
c_t=-nc+\beta c,
\end{cases}
\end{equation}
which was introduced by Levine et al. \cite{H.A. Levine 00} in the modeling of the
interaction between vascular endothelial cells and the vascular endothelial growth factor
during the initiation of tumor angiogenesis.  Here $n>0$ denotes the cell density and
$c>0$ the concentration of chemical signal.  The logarithmic sensitivity reflects
the Weber--Fechner law and, at the same time, produces a singularity at $c=0$; see, for
example, \cite{W. Alt 87,F.W. Dahlquist 72,E.F. Keller 71}.  The Hopf--Cole transformation
introduced by Levine and Sleeman \cite{H.A. Levine 97},
\[
q:=-(\ln c)_x=-\frac{c_x}{c},
\]
removes this singularity and transforms \eqref{chemotaxis model} into \eqref{1.1}.

The Cauchy problem for \eqref{1.1} has been studied extensively around constant
background states.  Guo et al. \cite{GXZZ} established global strong solutions for large
initial data.  Zhang et al. \cite{Y. Zhang 13} constructed global classical solutions near
positive constants and derived algebraic decay, while Li et al. \cite{D.Li 15} developed
entropy estimates yielding global well-posedness for large classical solutions.  Related
multidimensional well-posedness results can be found in
\cite{D.Li 15,DL,Hao}.  When the end states in \eqref{i d} are different, however, the
large-time dynamics are governed by nonconstant wave patterns associated with the
Riemann problem for the inviscid system
\begin{equation}\label{conservation law}
\begin{cases}
n_t-(nq)_x=0,\\
q_t-n_x=0,
\end{cases}
\end{equation}
with
\begin{equation}\label{n0,q0}
(n,q)(x,0)=
\begin{cases}
(n_-,q_-),&x<0,\\
(n_+,q_+),&x>0.
\end{cases}
\end{equation}
Writing \eqref{conservation law} as
\begin{equation}\label{equivalent system}
\binom{n}{q}_t+
\begin{pmatrix}
-q&-n\\
-1&0
\end{pmatrix}
\binom{n}{q}_x=0,
\end{equation}
we obtain the characteristic speeds
\begin{equation}\label{1.8}
\lambda_1(n,q)
=
\frac{-q-\sqrt{q^2+4n}}2
<
\frac{-q+\sqrt{q^2+4n}}2
=
\lambda_2(n,q),
\end{equation}
and the corresponding right eigenvectors
\[
r_k(n,q)=(-\lambda_k(n,q),1)^T,\qquad k=1,2.
\]
Moreover,
\begin{equation}\label{genuinely nonlinear}
\begin{aligned}
\nabla\lambda_1(n,q)\cdot r_1(n,q)
&=-1-\frac{q}{\sqrt{q^2+4n}}\neq0,\\
\nabla\lambda_2(n,q)\cdot r_2(n,q)
&=-1+\frac{q}{\sqrt{q^2+4n}}\neq0, \text{ if } n>0.
\end{aligned}
\end{equation}
Thus, in the region $n>0$, the inviscid system is strictly hyperbolic
and both characteristic fields are genuinely nonlinear.

The Riemann structure of \eqref{conservation law} contains shocks, rarefaction waves and
their superpositions.  Wang and Hillen \cite{Wang08} constructed shock waves for the
inviscid system and the corresponding traveling waves of \eqref{1.1}.  The
one-dimensional stability theory for such traveling waves under zero-mass perturbation was subsequently developed in
\cite{T.Li 09,Jin 13,20JDE}; Li, Wang and Zhang \cite{J.Li 13} also treated a composite wave of
two traveling waves.  An important advance was made by Choi, Kang, Kwon and Vasseur
\cite{K. Choi 20 M3}, who introduced a weighted relative entropy and a time-dependent
shift and proved an $a$-contraction property for a small traveling wave, allowing
arbitrarily large perturbations at the level of the relative entropy.  This contraction
estimate was then used by Choi, Kang and Vasseur \cite{Choi 20} to obtain global
well-posedness for large $H^1$ perturbations of a small traveling wave.

Two more recent developments are especially relevant to the present work.  First, Liu,
Li and Xu \cite{Liu} established nonlinear stability of planar traveling waves for the
three-dimensional version of \eqref{1.1} under general perturbations.  Their argument
combines the relative-entropy method with a time-dependent shift in the basic estimate
and exploits intrinsic cancellations of the chemotaxis system in the higher-order
estimates, thereby removing the zero-mass restriction that is inherent in the classical
anti-derivative framework.  Second, Liu and Li \cite{LiuRare2026} established the
asymptotic stability of strong rarefaction waves for \eqref{1.1}, including both a single
rarefaction wave and a superposition of two rarefaction waves.  In that setting the wave
strength is not required to be small.  The key ingredients are the monotonicity of the
smooth rarefaction approximation in both space and time and the use of the coupling
structure of \eqref{1.1} to recover the missing dissipation of the hyperbolic component.

These two mechanisms do not simply superpose when a rarefaction wave and a viscous shock
are present simultaneously.  This is the main issue addressed in the present paper.  We
consider a 1-rarefaction wave connecting $(n_-,q_-)$ to an intermediate state
$(n_m,q_m)$ and a 2-viscous shock wave (i.e. traveling wave) connecting $(n_m,q_m)$ to $(n_+,q_+)$.  We prove that,
when the two wave strengths and the initial perturbation are small, the
resulting composite wave of traveling wave and rarefaction wave is asymptotically stable under general
$H^1$-type perturbations.

To formulate the result, define the $k$-rarefaction curve through a state
$(n_-,q_-)$ by
\begin{equation}\label{Rk}
R_k(n_-,q_-)
=
\left\{
(n,q)\in\mathbb R^2:
h_k(n,q)=h_k(n_-,q_-),\
\lambda_k(n,q)\ge\lambda_k(n_-,q_-)
\right\},
\end{equation}
where $h_k$ is a $k$-Riemann invariant.  Following
\cite{F.He 24,T.Li 22}, we use
\begin{equation}\label{RI}
\begin{aligned}
h_1(n,q)
&=
(\sqrt{q^2+4n}+q)(\sqrt{q^2+4n}-2q)^2,\\
h_2(n,q)
&=
(\sqrt{q^2+4n}-q)(\sqrt{q^2+4n}+2q)^2,
\end{aligned}
\end{equation}
which satisfy
\begin{equation}\label{RId}
\nabla h_k\cdot r_k=0,\qquad k=1,2.
\end{equation}
For the shock family, we write $S_2(n_R,q_R)$ for the local backward
2-shock curve consisting of left states $(n_L,q_L)$ for which there exists a speed
$\sigma>0$ satisfying the Rankine--Hugoniot relations and the Lax condition.  The
associated viscous profile solves
\begin{equation}\label{2-viscous shock}
\begin{cases}
-\sigma(\tilde n^S)'-(\tilde n^S\tilde q^S)'=(\tilde n^S)'',\\
-\sigma(\tilde q^S)'-(\tilde n^S)'=0,\\
(\tilde n^S,\tilde q^S)(-\infty)=(n_L,q_L),\\
(\tilde n^S,\tilde q^S)(+\infty)=(n_R,q_R).
\end{cases}
\end{equation}

Our main theorem is the following.

\begin{theorem}\label{composite wave theorem}
For a given constant state $(n_+,q_+)\in\mathbb R_+\times\mathbb R$, there exist
constants $\delta_0,\delta_1>0$ such that the following holds.  Let
\[
(n_m,q_m)\in S_2(n_+,q_+),
\qquad
(n_m,q_m)\in R_1(n_-,q_-),
\]
and assume
\[
|n_+-n_m|+|n_m-n_-|\le\delta_0.
\]
Let $(n^r,q^r)(x/t)$ be the 1-rarefaction wave of \eqref{conservation law}
connecting $(n_-,q_-)$ to $(n_m,q_m)$, and let
$(\tilde n^S,\tilde q^S)(x-\sigma t)$ be the 2-viscous shock profile
connecting $(n_m,q_m)$ to $(n_+,q_+)$.  Assume that the initial data satisfy
\begin{equation}\label{initial data}
\sum_{\pm}
\|(n_0-n_\pm,q_0-q_\pm)\|_{L^2(\mathbb R_\pm)}
+
\|(n_{0x},q_{0x})\|_{L^2(\mathbb R)}
\leq\delta_1,
\end{equation}
where $\mathbb R_-:=(-\infty,0)$ and $\mathbb R_+:=(0,\infty)$.
Then \eqref{1.1} admits a unique global solution $(n,q)$.  Moreover, there exists an
absolutely continuous shift $\mathbf X(t)$ such that
\begin{equation}\label{1D solution space}
\begin{aligned}
&n(t,x)-
\left(
n^r(x/t)
+\tilde n^S(x-\sigma t-\mathbf X(t))
-n_m
\right)
\in C((0,\infty);H^1(\mathbb R)),\\
&q(t,x)-
\left(
q^r(x/t)
+\tilde q^S(x-\sigma t-\mathbf X(t))
-q_m
\right)
\in C((0,\infty);H^1(\mathbb R)),\\
&n_{xx}(t,x)
-\tilde n^S_{xx}(x-\sigma t-\mathbf X(t))
\in L^2(0,\infty;L^2(\mathbb R)).
\end{aligned}
\end{equation}
In addition,
\begin{equation}\label{Limsup}
\begin{aligned}
\sup_{x\in\mathbb R}\Big|
(n,q)(t,x)
-&\big(
n^r(x/t)+\tilde n^S(x-\sigma t-\mathbf X(t))-n_m,\\
&\quad
q^r(x/t)+\tilde q^S(x-\sigma t-\mathbf X(t))-q_m
\big)
\Big|
\longrightarrow0 .
\end{aligned}
\end{equation}
as $t\to\infty$, and
\begin{equation}\label{LimX'}
\lim_{t\to\infty}\frac{\mathbf X(t)}{t}=0.
\end{equation}
\end{theorem}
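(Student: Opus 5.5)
The plan is the standard continuation argument: local existence plus a priori estimates, with the basic estimate built on the weighted relative entropy with shift of \cite{K. Choi 20 M3,Liu} and the rarefaction energy structure of \cite{LiuRare2026}.

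\textbf{Ansatz and error terms.} Replace $(n^r,q^r)(x/t)$ by a smooth approximate 1-rarefaction wave $(\bar n^R,\bar q^R)(t,x)$. Following \cite{LiuRare2026}, build it from a Burgers-type solution $w_t+ww_x=0$ with smoothed data of strength $\delta_R$. It then satisfies $\lambda_1(\bar n^R,\bar q^R)_x>0$, monotonicity in $x$ and $t$, and the standard decay bounds $\|\partial_x^k(\bar n^R,\bar q^R)\|_{L^p}\lesssim \min\{\delta_R,\delta_R^{1/p}(1+t)^{-1+1/p}\}$. Define the ansatz
\[
(\bar n,\bar q)(t,x)=(\bar n^R,\bar q^R)(t,x)+(\tilde n^S,\tilde q^S)(x-\sigma t-\mathbf X(t))-(n_m,q_m).
\]
Plugging it into \eqref{1.1} produces three kinds of error:
\begin{itemize}
\item the viscous error of the rarefaction, $(\bar n^R)_{xx}$;
\item the interaction terms, such as $(\bar n^R-n_m)(\tilde q^S)_x$ and $(\tilde n^S-n_m)(\bar q^R)_x$;
\item the shift term $-\dot{\mathbf X}(\tilde n^S,\tilde q^S)'$.
\end{itemize}
The two waves separate: the rarefaction occupies $x\le\lambda_1(n_m,q_m)t$, while the shock sits near $x=\sigma t$ with $\sigma>\lambda_2(n_m,q_m)>\lambda_1(n_m,q_m)$, and the shock profile decays exponentially. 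With a priori control $|\dot{\mathbf X}|\lesssim \|(n-\bar n,q-\bar q)\|_{L^\infty}$, the interaction terms are bounded by $C\delta e^{-c\delta_S t}$ in $L^1\cap L^2$, which is time-integrable. The viscous rarefaction error is handled through the decay bounds via $L^1_t$-integrability of $\|(\bar n^R)_{xx}\|_{L^2}^{4/3}$-type quantities, as in \cite{LiuRare2026}.

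\textbf{Zeroth order: weighted relative entropy.} Choose the convex entropy $\eta(n,q)=n\ln n-n+q^2/2$ of \eqref{conservation law}, up to the normalisation used in \cite{Liu}. Consider
\[
\int a(t,x)\,\eta\big((n,q)\,|\,(\bar n,\bar q)\big)\,dx,
\]
where the weight is
\[
a(t,x)=1+\frac{\lambda}{\delta_S}\big(\tilde q^S(\xi)-q_m\big)\ \ (\xi=x-\sigma t-\mathbf X(t))\;+\;\text{(rarefaction modulation factor)},
\]
with the modulation factor of the form $\mu(\bar q^R-q_-)/\delta_R$, or a function of $\bar n^R$, whose sign is chosen so that its time and space derivatives reinforce the rarefaction good term.

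Define the shift by the usual ODE $\dot{\mathbf X}=-\frac{M}{\delta_S}\big[\int a\,\partial_\xi(\ldots)\,\tilde q^S_x\,dx\big]$, which creates the good term $\frac{\delta_S}{M}|\dot{\mathbf X}|^2$. The time derivative of the entropy then splits into the following pieces:
\begin{itemize}
\item the shock part, treated by the Poincaré-type inequality of Kang--Vasseur on the weighted shock layer, giving the shock dissipation $G^S=\int|a_x|\,|q-\bar q|^2$;
\item the rarefaction part, which yields $G^R=\int(\bar q^R)_x|(n-\bar n,q-\bar q)|^2$ from genuine nonlinearity, with the modulation factor absorbing the cross terms from the non-symmetrisable flux;
\item the diffusion $D=\int a\,n\,|(\ln n-\ln\bar n)_x|^2$;
\item the errors.
\end{itemize}
Smallness of $\delta_0$, $\delta_1$ closes the estimate
\[
\sup_t\|(n-\bar n,q-\bar q)\|_{L^2}^2+\int_0^\infty\big(\delta_S|\dot{\mathbf X}|^2+G^S+G^R+D\big)\,dt\le C\big(\delta_1^2+\delta_0^{1/3}\big).
\]

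\textbf{Higher order and conclusion.} For the $H^1$ estimate, differentiate the system. The $n_x$ estimate uses the parabolic equation directly and gives $\int_0^\infty\|(n-\bar n)_{xx}\|^2$. The missing dissipation of $(q-\bar q)_x$ is recovered from the coupling: multiply the first equation's perturbation by $-(q-\bar q)_x$. Using $q_t=n_x$, the term $(n\,(q-\bar q))_x$ contains $n(q-\bar q)_x$, which yields $\int n\,|(q-\bar q)_x|^2$ after adding $\frac{d}{dt}\int(n-\bar n)(q-\bar q)_x$-type cross terms, as in \cite{Liu,LiuRare2026}.

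With uniform-in-time $H^1$ bounds and $\frac{d}{dt}\|(n-\bar n,q-\bar q)_x\|^2\in L^1(0,\infty)$, we get $\|(n-\bar n,q-\bar q)_x\|\to0$. Gagliardo--Nirenberg then gives \eqref{Limsup}; replacing $\bar n^R$ by $n^r(x/t)$ costs only $O((1+t)^{-1})$ in $L^\infty$. For the shift, $|\dot{\mathbf X}|\lesssim\|(n-\bar n,q-\bar q)\|_{L^\infty}\to0$, which gives \eqref{LimX'}.

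\textbf{Main obstacle.} I expect the hardest step to be the zeroth-order estimate, specifically making the rarefaction terms coexist with the shock weight. The hyperbolic terms from the rarefaction have no sign a priori: the flux $nq$ is not the gradient of $\eta$ in the symmetric way, so cross terms of the form $(\bar n^R)_x(n-\bar n)(q-\bar q)$ appear. The choice of modulation factor has to turn these into a positive form in $G^R$ without destroying the shock Poincaré argument. I would first try a modulation factor that is a function of the 1-Riemann invariant $h_2(\bar n^R,\bar q^R)$, whose derivative is proportional to $(\bar q^R)_x$. I would then check by explicit $2\times2$ quadratic-form computation at the state $(n_m,q_m)$ that positivity holds for small $\delta_R$.
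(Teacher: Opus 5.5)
\textbf{The decisive step is missing.} Your outline follows the paper's architecture: Burgers-based smooth rarefaction, shifted composite ansatz, weighted relative entropy with the shock weight and the Kang--Vasseur Poincar\'e inequality, wave-separation interaction estimates, recovery of $\int_0^t\|\partial_\xi(q-\tilde q)\|^2$ through $\frac{d}{dt}\int(n-\tilde n)\partial_\xi(q-\tilde q)$, and the $W^{1,1}$ decay argument. But the one step specific to this system is left as a plan, and the plan starts from a false premise.

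Write $u=n-\tilde n$, $v=q-\tilde q$, let $N,Q$ be the rarefaction in the shock frame, and $\Lambda=\lambda_1(N,Q)$. For \eqref{1.1} one has $A(U|\tilde U)=(-(n-\tilde n)(q-\tilde q),0)^T$. So in the unmodulated identity the rarefaction contributes \emph{only} the indefinite cross term $\mathbf B_9=\int a^{-\mathbf X}\frac{N_\xi}{\tilde n}uv\,d\xi$. There is no diagonal term $G^R$ coming from genuine nonlinearity. Nor is $\mathbf B_9$ perturbative, since $\int_0^\infty\|N_\xi\|_{L^\infty}dt=\infty$. The modulation must therefore create all of the rarefaction dissipation and neutralize $\mathbf B_9$ at the same time.

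Take $b=f(N)$, multiplicative as in the paper; an additive version behaves the same away from the shock. The $b$-dependent quadratic terms are then
\[
\int_{\mathbb R}a^{-\mathbf X}\Big[f'(N)\big(NQ_\xi+(Q-\tilde q)N_\xi\big)\Pi(n|\tilde n)+f'(N)\Lambda^2Q_\xi\,\frac{v^2}{2}+N_\xi\Big(\frac{f(N)}{\tilde n}-f'(N)\Big)uv\Big]\,d\xi .
\]
Here the exact relations $N_t-\sigma N_\xi=(NQ)_\xi=\Lambda^2Q_\xi$ and $N_\xi=-\Lambda Q_\xi$ are what give the flux term $-\tilde q\,b_\xi\Pi(n|\tilde n)$ a sign, since $\tilde q$ itself has none. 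The leading part is $-\int a^{-\mathbf X}|Q_\xi|\big[\tfrac{f'}{2}(u+\Lambda v)^2+\tfrac{|\Lambda|f}{\tilde n}uv\big]d\xi$, which is coercive exactly when $0<f<2\tilde nf'$. The paper chooses $f=N/n_m$. This kills the cross coefficient up to the shock-localized factor $\frac{N_\xi}{n_m}(\frac N{\tilde n}-1)$, as shown in Lemma~\ref{rarefaction-modulation-L34}.

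\textbf{Your candidate is at the wrong scale.} A ``function of $\bar n^R$'' is the right class, but your planned $2\times2$ positivity check is not enough. The normalization $\mu(\bar q^R-q_-)/\delta_R$ with $\mu$ fixed gives $f'=\mu/(\delta_R|\Lambda|)$. A large $f'$ improves coercivity only in the direction $u+\Lambda v$. Meanwhile the remainders generated by $b_\xi$, such as $-\int a^{-\mathbf X}b_\xi\Pi(n|\tilde n)v$ and $-\int a^{-\mathbf X}b_\xi\,n\ln\frac n{\tilde n}\,\partial_\xi\ln\frac n{\tilde n}$, cost $Cf'\chi_1\int|N_\xi|u^2$. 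After Cauchy--Schwarz against $\mathbf D$, the second also costs $Cf'^2\delta_R\int|N_\xi|u^2$. These must be absorbed by the $f'$-independent coercivity $\frac{f}{\tilde n}u^2$ on the line $u+\Lambda v=0$. Closing the bootstrap forces $\delta_R\ll\chi_1$, because the source $C\delta_R^{1/3}$ must be $\ll\chi_1^2$. So the natural estimates close only if $f'\chi_1\ll1$. The workable choice is $f'=O(1)$, i.e.\ a modulation of amplitude $O(\delta_R)$ such as $b-1=(N-n_m)/n_m$.

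\textbf{Further inaccuracies.}
\begin{enumerate}
\item The shock part does not yield $\int|a_x||q-\bar q|^2$. After the Poincar\'e step, $\mathbf G_1$ is used up cancelling $\mathbf B_1+\mathbf B_2$ at order $\lambda$. What survives, with $S=(\tilde n^S)^{-\mathbf X}$, is $\int|S_\xi|u^2$ without the factor $\lambda/\delta_S$, plus $\frac{\lambda}{\delta_S}\int|S_\xi||v+\varphi(n)|^2$. The large factor sits only on the characteristic combination, so every later bound must go through $|v|\le|v+\varphi(n)|+C|u|$.
\item The interaction errors must carry the shock strength, $C\delta_R\delta_Se^{-c\delta_St}$ as in Lemma~\ref{Interaction estimates}. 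A bound $C\delta e^{-c\delta_St}$ only integrates to $C\delta/\delta_S$.
\item The Lax condition is $\sigma<\lambda_2(n_m,q_m)$, not $\sigma>\lambda_2(n_m,q_m)$. The wave separation comes from $\sigma>0>\lambda_1$.
\end{enumerate}
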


We next explain the main novelties and the proof strategy.
The first difficulty is that the two components of the composite wave have very different
dissipative mechanisms.  The traveling wave is spatially localized and has a neutral
translation mode.  Its stability is therefore naturally measured by a weighted relative
entropy with a time-dependent shift, as in \cite{K. Choi 20 M3,Liu}.  In contrast, the
smooth rarefaction wave spreads in space and its favorable terms come from monotonicity of
the profile, as in \cite{LiuRare2026}.  If one applies the shock contraction estimate to
the composite profile without modification, the derivatives of the rarefaction profile
generate additional terms that are not controlled by the shock dissipation alone.

To overcome this obstruction, we introduce, in addition to the shock weight
$a^{-\mathbf X}$, a rarefaction modulation factor
\[
b(\xi,t):=\frac{N(\xi,t)}{n_m},
\qquad
N(\xi,t)=\tilde n^R(\xi+\sigma t,t),
\]
and use a modulated relative entropy of the form
\[
\int_{\mathbb R}
a^{-\mathbf X}b
\left[
\Pi(n|\tilde n)+\frac12|q-\tilde q|^2
\right]\,d\xi.
\]
The derivatives of $b$ do not constitute merely an error.  After being combined with the
rarefaction term already present in the relative entropy identity, they produce the
coercive rarefaction dissipation
\[
\mathcal G_R(t)
=
\frac1{n_m}
\int_{\mathbb R}a^{-\mathbf X}
\left[
N|Q_\xi|\Pi(n|\tilde n)
+
\frac{|N_t-\sigma N_\xi|}{2}|q-\tilde q|^2
\right]d\xi,
\]
where $Q(\xi,t)=\tilde q^R(\xi+\sigma t,t)$.  This is the mechanism that
allows the shock contraction and rarefaction monotonicity to coexist in a single
$L^2$ estimate.  At the same time, the shock part controls a characteristic combination
of the perturbations rather than the two components separately; preserving this structure
is essential for avoiding a loss of the factor $\lambda/\delta_S$.

The second difficulty is that the superposition of the rarefaction and the traveling wave is only an
approximate solution.  The perturbation equations contain source terms
\[
F_1=-N_{\xi\xi},
\qquad
F_2=
-\partial_\xi
\left[
(N-n_m)(T-q_m)+(S-n_m)(Q-q_m)
\right].
\]
The two waves propagate in opposite directions: the rarefaction wave has negative
characteristic speed, whereas the traveling wave to the right.  Their spatial separation
therefore yields exponentially decaying interaction terms.  We exploit this separation in
both $L^1$- and $L^2$-based estimates and show that the mixed source terms are integrable
in time.  These interaction estimates are absent in the traveling wave contraction theory of
\cite{K. Choi 20 M3} and in the rarefaction wave analysis of
\cite{LiuRare2026}.

The third difficulty comes from the degenerate dissipation of \eqref{1.1}: diffusion acts
only on $n$, while the $q$-equation is hyperbolic.  In the basic estimate, the derivative
$q_\xi-\tilde q_\xi$ is expressed through the parabolic component and its derivatives;
the time-derivative term produced by this substitution is treated by integration by parts
in time.  In the first-order estimate, the same coupling is used to recover
$\int_0^t\|\partial_\xi(q-\tilde q)\|^2\,d\tau$ from the strong dissipation of
the $n$-equation.  This part is inspired by the structural replacement used for strong
rarefaction waves in \cite{LiuRare2026}, but here it must be compatible with the
traveling wave weight, the time-dependent shift and the composite-wave source terms.

These ingredients yield a closed a priori estimate consisting of the $H^1$ norm of the
perturbation, the parabolic dissipation, the shift dissipation, the traveling wave weighted
dissipation and the new rarefaction dissipation $\mathcal G_R$.  A standard continuation
argument then gives global existence.  Finally, the time integrability of the first and
second derivatives implies decay of the perturbation in $L^\infty$, while the shift
equation gives $\dot{\mathbf X}(t)\to0$ and hence
$\mathbf X(t)/t\to0$.

It is particularly useful to compare our argument with the composite-wave result of
Kang, Vasseur and Wang \cite{KangVasseurWang23} for the one-dimensional barotropic
Navier--Stokes equations.  They proved the time-asymptotic stability of a composite wave
consisting of a 1-rarefaction wave and a 2-viscous shock.  Their proof replaces the
classical anti-derivative approach for shocks by the $a$-contraction with shifts in the
BD effective variables
\[
h=u-(\ln v)_x.
\]
The weight and the shift are attached to the shock component, while the errors caused by
the non-exact superposition are controlled by spatial separation of the two waves.  In
their weighted relative-entropy identity, the rarefaction contribution itself has the
favorable sign: the good part contains
\[
\int_{\mathbb R} a\,u^R_\xi\,p(v|\tilde v)\,d\xi ,
\]
which is nonnegative because $a>0$, $u^R_\xi>0$, and the relative pressure
$p(v|\tilde v)$ is nonnegative.  Hence this term enters the energy identity with a
dissipative sign.

This point is substantially different for the chemotaxis system considered here.  In the
unmodulated weighted relative-entropy identity, the rarefaction derivative does not
produce a sign-definite coercive term.  In particular, it gives the mixed contribution
\[
\mathbf B_9(t)
=
\int_{\mathbb R}
a^{-\mathbf X}
\frac{N_\xi}{\tilde n}
(n-\tilde n)(q-\tilde q)\,d\xi ,
\]
whose sign is indefinite.  Thus, in contrast with the barotropic Navier--Stokes case, the
rarefaction contribution is initially a bad term rather than part of the good
dissipation.  A main new ingredient of the present paper is to introduce the additional
rarefaction modulation
\[
b(\xi,t)=\frac{N(\xi,t)}{n_m}
\]
in the weighted relative entropy.  The derivatives $b_t$ and $b_\xi$ then combine with
$\mathbf B_9^{(b)}$ so that
\[
\mathbf B_9^{(b)}+\mathcal Q_b
=
-\mathcal G_R+\sum_{j=1}^4\mathcal R_{R,j},
\]
where $\mathcal G_R$ is positive and the four remainders are either absorbable or
integrable in time.  This rarefaction conversion mechanism is isolated in
Lemma~\ref{rarefaction-modulation-L34}.  It is the key point that allows the
shock-contraction structure and the rarefaction dynamics to be combined for
\eqref{1.1}.

There is also a structural difference in the recovery of the missing derivative
dissipation.  The barotropic Navier--Stokes argument uses the BD effective variable to
obtain the basic estimate and then returns to the classical variables.  Here the
hyperbolic component $q$ has no direct diffusion, and we instead exploit the exact
coupling in \eqref{1.1} to express the spatial derivative of the $q$-perturbation through
the parabolic component; the resulting time derivative is handled by integration by
parts in time.  Thus the present proof follows the same broad
$a$-contraction-with-shift philosophy, but both the rarefaction coercivity and the
recovery of the hyperbolic dissipation require mechanisms specific to the chemotaxis
system.

The rest of the paper is organized as follows.  In Section~\ref{preliminary}, we introduce
the relative quantities, construct the smooth approximate 1-rarefaction wave, recall the
2-viscous shock profile, define the weight and the shift, and derive the perturbation
system for the shifted composite profile.  Section~\ref{Sect.3} is devoted to the
a priori estimates and the proof of Theorem~\ref{composite wave theorem}.  The basic
estimate combines the weighted relative entropy for the shock with the rarefaction
modulation described above; the higher-order estimates then recover the full $H^1$
dissipation and complete the nonlinear stability argument.

\section{Preliminaries}\label{preliminary}
In this section, we collect the relative-entropy quantities used below,
construct a smooth approximation of the 1-rarefaction wave, recall the
2-viscous shock profile, and define the time-dependent shift
$\mathbf X(t)$ and the corresponding composite profile.

\subsection{Relative quantities}
For a smooth scalar or vector-valued function $f$, its relative function
with respect to two states $u$ and $v$ is defined by
\begin{align}\label{fu,v}
f(u|v):=f(u)-f(v)-\nabla f(v)\cdot(u-v).
\end{align}
For
\[
\Pi(n):=n\ln n-n,\qquad n>0,
\]
we write
\begin{equation}\label{4.Pin1n2}
\Pi(n_1|n_2)
=
\Pi(n_1)-\Pi(n_2)-\Pi'(n_2)(n_1-n_2).
\end{equation}

\begin{lemma}\label{4.local inequality}
Fix $n_->0$.  There exist constants $\delta_*>0$ and $C>0$ such that
if
\[
0<\delta<\delta_*,
\qquad
\left|\frac{n_1}{n_2}-1\right|<\delta,
\qquad
\frac{n_-}{2}<n_2<2n_-,
\]
then
\begin{equation}\label{PI>}
\Pi(n_1|n_2)
\ge
\frac{n_2}{2}
\left[
\left(\frac{n_1}{n_2}-1\right)^2
-\frac13
\left(\frac{n_1}{n_2}-1\right)^3
\right],
\end{equation}
and
\begin{equation}\label{PI<}
\Pi(n_1|n_2)
\le
\frac{n_2}{2}
\left[
\left(\frac{n_1}{n_2}-1\right)^2
-\frac13
\left(\frac{n_1}{n_2}-1\right)^3
\right]
+
C\delta
\left|\frac{n_1}{n_2}-1\right|^3.
\end{equation}
\end{lemma}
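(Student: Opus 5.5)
Set $x:=n_1/n_2-1$, so that $|x|<\delta$ and $n_1=n_2(1+x)$. Since $\Pi'(n)=\ln n$, the definition \eqref{4.Pin1n2} gives the exact identity
\[
\Pi(n_1|n_2)=n_1\ln n_1-n_1-n_2\ln n_2+n_2-\ln n_2\,(n_1-n_2)
=n_2\big[(1+x)\ln(1+x)-x\big]=:n_2\,\phi(x).
\]
The lemma is thus a one-variable Taylor statement about $\phi$ near $x=0$. The factor $n_2$ is exact and carries no error, so the bound $\frac{n_-}{2}<n_2<2n_-$ is only needed to make the constant $C$ depend on $n_-$ alone after converting the remainder.

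For $|x|<1$ we expand $\phi$ as a power series:
\[
\phi(x)=\sum_{k\ge2}\frac{(-1)^k}{k(k-1)}x^k=\frac{x^2}{2}-\frac{x^3}{6}+\frac{x^4}{12}-\frac{x^5}{20}+\cdots .
\]
Alternatively, $\phi''(x)=\frac1{1+x}$ and $\phi'''(x)=-\frac1{(1+x)^2}$, so Taylor's formula with integral remainder gives the same expansion. Therefore
\[
\phi(x)-\frac12\Big(x^2-\frac13x^3\Big)=\sum_{k\ge4}\frac{(-1)^k}{k(k-1)}x^k=:r(x).
\]

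For the lower bound \eqref{PI>}, it suffices to show $r(x)\ge0$ for $|x|<\delta_*$. The fourth derivative is $\phi^{(4)}(x)=\frac{2}{(1+x)^3}>0$ for $x>-1$. Hence $r(x)=\int_0^x\frac{(x-s)^3}{6}\phi^{(4)}(s)\,ds\ge0$ for every $x>-1$, since $(x-s)^3$ and the orientation of the integral have matching signs. No sign case analysis is needed. Multiplying by $n_2>0$ yields \eqref{PI>}.

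For the upper bound \eqref{PI<}, the same remainder formula gives
\[
0\le r(x)\le \frac{x^4}{24}\sup_{|s|\le|x|}\frac{2}{(1+s)^3}\le \frac{x^4}{12(1-\delta_*)^3}.
\]
With $\delta_*\le\frac12$ this gives $r(x)\le\frac23|x|^4\le\frac23\delta|x|^3$. Then $n_2r(x)\le\frac43 n_-\,\delta|x|^3$, so \eqref{PI<} holds with $C=\frac43n_-$.

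There is no real obstacle here. The only point needing care is the sign of the remainder for negative $x$, and the positivity of $\phi^{(4)}$ on $(-1,\infty)$ settles it.
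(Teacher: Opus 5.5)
Your proof is correct and follows the paper's argument. Both reduce to $\Pi(n_1|n_2)=n_2\big[(1+w)\ln(1+w)-w\big]$ and Taylor-expand to third order, then use that the fourth-order remainder, driven by $\phi^{(4)}=2/(1+w)^3$, is nonnegative and $O(|w|^4)\le C\delta|w|^3$ for $|w|<\delta_*\le\frac12$. Using the integral form of the remainder instead of the paper's Lagrange form $\frac{w^4}{12(1+\theta w)^3}$ is an inessential difference.
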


\begin{proof}
Set
\[
w:=\frac{n_1}{n_2}-1.
\]
Then
\[
\Pi(n_1|n_2)
=
n_2\big[(1+w)\ln(1+w)-w\big].
\]
For $|w|<\delta_*<1/2$, Taylor's formula gives
\[
(1+w)\ln(1+w)-w
=
\frac{w^2}{2}-\frac{w^3}{6}
+\frac{w^4}{12(1+\theta w)^3}
\]
for some $\theta\in(0,1)$.  The remainder is nonnegative and is bounded
above by $C|w|^4\le C\delta|w|^3$.  This proves
\eqref{PI>}--\eqref{PI<}.
\end{proof}

\subsection{Rarefaction wave}
We consider the 1-rarefaction wave connecting the left state
$(n_-,q_-)$ to the intermediate state $(n_m,q_m)$.  In accordance with
the definition \eqref{Rk}, throughout this paper we assume
\begin{equation}\label{R1-orientation-prelim}
(n_m,q_m)\in R_1(n_-,q_-),
\end{equation}
so that
\[
h_1(n_m,q_m)=h_1(n_-,q_-),
\qquad
\lambda_1(n_-,q_-)
<
\lambda_1(n_m,q_m).
\]
Let
\[
\lambda_-:=\lambda_1(n_-,q_-),
\qquad
\lambda_m:=\lambda_1(n_m,q_m).
\]
Since $n>0$, both $\lambda_-$ and $\lambda_m$ are negative.

Consider the Riemann problem for the inviscid Burgers equation
\begin{equation}\label{inviscid Burgers}
\begin{cases}
w^r_t+w^r w^r_x=0,\\
w^r(x,0)=w^r_0(x):=
\begin{cases}
\lambda_-,&x<0,\\
\lambda_m,&x>0.
\end{cases}
\end{cases}
\end{equation}
For $t>0$, its rarefaction fan is
\begin{equation}\label{wr'}
w^r(x,t)=
\begin{cases}
\lambda_-,&x\le\lambda_-t,\\
x/t,&\lambda_-t\le x\le\lambda_mt,\\
\lambda_m,&x\ge\lambda_mt.
\end{cases}
\end{equation}
The 1-rarefaction wave $(n^r,q^r)$ is defined by
\begin{equation}\label{nr,qr}
\lambda_1((n^r,q^r)(x,t))=w^r(x,t),
\qquad
h_1((n^r,q^r)(x,t))=h_1(n_-,q_-).
\end{equation}
Thus
\[
\lambda_-
\le
\lambda_1((n^r,q^r)(x,t))
\le
\lambda_m,
\qquad
(n^r,q^r)(x,t)\in R_1(n_-,q_-).
\]

Let
\[
\delta_R:=|n_m-n_-|.
\]
On the fixed compact set of states considered here,
\begin{equation}\label{deltaR-equivalence-prelim}
|q_m-q_-|
+
|\lambda_m-\lambda_-|
\sim\delta_R.
\end{equation}
To construct a smooth approximation, consider
\begin{equation}\label{smooth}
\begin{cases}
w_t+ww_x=0,\\
w(x,0)=w_0(x),
\end{cases}
\end{equation}
with
\begin{equation}\label{w0}
w_0(x)
:=
\frac{\lambda_m+\lambda_-}{2}
+
\frac{\lambda_m-\lambda_-}{2}\tanh x.
\end{equation}

By the characteristic method, the solution $w(x,t)$ of \eqref{smooth}
has the following standard properties.

\begin{lemma}[cf. Matsumura--Nishihara \cite{Matsumura 86}, Lemma 2.1]
\label{lem2.1ref}
The problem \eqref{smooth} has a unique smooth global solution satisfying:
\begin{enumerate}
\item[(1).]
\[
\lambda_-<w(x,t)<\lambda_m,\qquad
w_x(x,t)>0,
\qquad (x,t)\in\mathbb R\times[0,\infty).
\]

\item[(2).]
For every $1\le p\le\infty$,
\begin{equation}\label{wx,wxx}
\begin{aligned}
\|w_x(t)\|_{L^p}
&\le
C_p\min\{\delta_R,\delta_R^{1/p}t^{-1+1/p}\},\\
\|w_{xx}(t)\|_{L^p}
&\le
C_p\min\{\delta_R,t^{-1}\},
\qquad t>0.
\end{aligned}
\end{equation}

\item[(3).]
For $x\le\lambda_-t$,
\begin{equation}\label{w-w-}
|w(x,t)-\lambda_-|
+
|w_x(x,t)|
\le
C\delta_R
e^{-c|x-\lambda_-t|}.
\end{equation}

\item[(4).]
For $x\ge0$,
\begin{equation}\label{w-w+}
|w(x,t)-\lambda_m|
+
|w_x(x,t)|
\le
C\delta_R
e^{-c(x+|\lambda_m|t)}.
\end{equation}

\item[(5).]
\[
\lim_{t\to\infty}
\sup_{x\in\mathbb R}|w(x,t)-w^r(x,t)|=0.
\]
\end{enumerate}
\end{lemma}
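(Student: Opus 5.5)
The plan is the characteristic method for Burgers' equation with the monotone initial datum $w_0$ from \eqref{w0}. Since $w_0'(x)=\frac{\lambda_m-\lambda_-}{2}\operatorname{sech}^2x>0$, the characteristics $x=x_0+w_0(x_0)t$ never cross. The map $x_0\mapsto x_0+w_0(x_0)t$ has derivative $1+w_0'(x_0)t\ge1$, so it is a smooth increasing diffeomorphism of $\mathbb R$ for every $t\ge0$. Inverting it gives a unique smooth global solution $w(x,t)=w_0(x_0(x,t))$. Item (1) follows at once: $w$ takes values in the range $(\lambda_-,\lambda_m)$ of $w_0$, and
\[
w_x=\frac{w_0'(x_0)}{1+w_0'(x_0)t}>0 .
\]

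For (2), I will use this formula for $w_x$. The pointwise bounds are $w_x\le \min\{\|w_0'\|_\infty,1/t\}\le C\min\{\delta_R,t^{-1}\}$. Monotonicity gives $\|w_x\|_{L^1}=\lambda_m-\lambda_-\le C\delta_R$ by \eqref{deltaR-equivalence-prelim}. Interpolating, $\|w_x\|_{L^p}^p\le\|w_x\|_\infty^{p-1}\|w_x\|_{L^1}$, which yields $\min\{\delta_R,\delta_R^{1/p}t^{-1+1/p}\}$.

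For $w_{xx}$, differentiate once more:
\[
w_{xx}=\frac{w_0''(x_0)}{(1+w_0'(x_0)t)^3}.
\]
Since $|w_0''|\le Cw_0'$, we get $|w_{xx}|\le Cw_0'/(1+w_0't)^3$. This is bounded by $C\delta_R$, and also by $C t^{-1}\cdot w_0'/(1+w_0't)^2$. For the $L^p$ norm I change variables $dx=(1+w_0't)\,dx_0$. For $p=1$ this gives
\[
\int |w_{xx}|\,dx\le C\int \frac{w_0'}{(1+w_0't)^2}\,dx_0\le C\min\{\delta_R,\,t^{-1}\!\int\! \tfrac{w_0't}{(1+w_0't)^2}dx_0\}.
\]
Here I expect the main technical point: getting $t^{-1}$ rather than $t^{-1}\log$. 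I plan to use that $w_0'\sim\delta_R e^{-2|x_0|}$. Substituting $s=w_0'(x_0)t$, so that $dx_0\sim ds/s$, the integral $\int s(1+s)^{-2}\,ds/s$ is bounded, which gives the $L^1$ bound $Ct^{-1}$. The general $p$ then follows by interpolating with the $L^\infty$ bound $\min\{\delta_R,t^{-1}\}$.

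For (3) and (4), the foot $x_0$ of the characteristic is localized. If $x\le\lambda_-t$, then $x_0=x-w_0(x_0)t\le x-\lambda_-t\le0$. Hence $|w-\lambda_-|=|w_0(x_0)-\lambda_-|\le C\delta_R e^{-2|x_0|}$ and $|w_x|\le w_0'(x_0)\le C\delta_Re^{-2|x_0|}$, with $|x_0|\ge|x-\lambda_-t|$. If $x\ge0$, then $x_0=x-w_0(x_0)t\ge x+|\lambda_m|t$ because $w_0<\lambda_m<0$. The same tanh tail bounds give \eqref{w-w+}.

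Finally, for (5) I compare with the fan \eqref{wr'} in three regions. In the region $x\le\lambda_-t$, the bound from (3) handles the difference. In the region $x\ge\lambda_mt$, the analogous argument gives $x_0\ge x-\lambda_mt\ge0$. Inside the fan, $x/t=x_0/t+w_0(x_0)$. So $|w-x/t|=|x_0|/t$ whenever $|x_0|\le\sqrt t$. For $|x_0|>\sqrt t$, both $w$ and $x/t$ lie within $Ce^{-2\sqrt t}+$ (distance to the endpoint) of the corresponding endpoint. In either case the difference is $O(t^{-1/2})+O(\delta_Re^{-2\sqrt t})\to0$ uniformly in $x$.
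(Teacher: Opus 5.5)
Your route is the paper's: the characteristic representation $w=w_0(x_0)$, $x=x_0+w_0(x_0)t$. You obtain (3)--(4) from $x_0\le x-\lambda_-t\le 0$ and $x_0\ge x+|\lambda_m|t$ exactly as the paper does, and the paper simply calls (1), (2), (5) standard. Most of your extra detail is right, but two steps need repair.

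In (5), the claim that ``the bound from (3) handles the difference'' fails as written. On $\{x\le\lambda_-t\}$ the bound $C\delta_Re^{-c|x-\lambda_-t|}$ equals $C\delta_R$ at the edge $x=\lambda_-t$ and does not decay in $t$; the same happens at $x=\lambda_mt$. So it gives no uniform convergence. Your fan argument fixes this if you run it for every $x$ rather than only inside the fan. Note that $w^r(x,t)$ is $x/t$ clamped to $[\lambda_-,\lambda_m]$ and $w\in(\lambda_-,\lambda_m)$. The identity $w-x/t=-x_0/t$ then gives $|w-w^r|\le|x_0|/t\le t^{-1/2}$ whenever $|x_0|\le\sqrt t$. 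If $x_0>\sqrt t$, then $x/t>w$, so $w^r\in(w,\lambda_m]$ and $|w-w^r|\le\lambda_m-w_0(x_0)\le C\delta_Re^{-2\sqrt t}$. The case $x_0<-\sqrt t$ is symmetric.

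In (2), the pointwise claim $|w_{xx}|\le Ct^{-1}w_0'/(1+w_0't)^2$ is false. It would force $\|w_{xx}\|_{L^\infty}\le C\delta_Rt^{-1}$, whereas the supremum is of order $t^{-1}$ once $\delta_Rt$ is large. What is true, and is all you need for the $L^\infty$ endpoint, is $w_0'/(1+w_0't)^3=t^{-1}s/(1+s)^3\le t^{-1}$ with $s=w_0'(x_0)t$.

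Also, the substitution $dx_0\sim ds/s$ degenerates at $x_0=0$, where $w_0''=0$. Instead use $e^{-2|x_0|}\le\cosh^{-2}x_0\le 4e^{-2|x_0|}$ and substitute $\tilde s=(\lambda_m-\lambda_-)te^{-2|x_0|}$ on each half-line. This gives $\int\frac{w_0't}{(1+w_0't)^2}\,dx_0\le C\int_0^\infty(1+\tilde s)^{-2}\,d\tilde s$. Hence you get the logarithm-free bound $\|w_{xx}\|_{L^1}\le C\min\{\delta_R,t^{-1}\}$, and interpolation finishes (2) as you planned.
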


\begin{proof}
The characteristic representation is
\[
w(x,t)=w_0(x_0),\qquad
x=x_0+w_0(x_0)t.
\]
If $x\le\lambda_-t$, then
$x_0\le x-\lambda_-t\le0$.  Since
$w_0(x_0)-\lambda_-\le C\delta_R e^{2x_0}$ and
$w'_0(x_0)\le C\delta_R e^{2x_0}$, \eqref{w-w-} follows.
If $x\ge0$, then
$x_0=x-w_0(x_0)t\ge x-\lambda_mt=x+|\lambda_m|t$.
Using the right tail of $\tanh x$ gives \eqref{w-w+}.
The remaining assertions are the standard characteristic estimates.
\end{proof}

By the implicit function theorem, the map
\[
(n,q)\longmapsto(\lambda_1(n,q),h_1(n,q))
\]
is locally invertible along the compact rarefaction segment because
$\nabla\lambda_1$ and $\nabla h_1$ are linearly independent.  We define
the smooth approximate rarefaction by
\begin{equation}\label{nR}
(\tilde n^R,\tilde q^R)(x,t)\in R_1(n_-,q_-),
\qquad
\lambda_1((\tilde n^R,\tilde q^R)(x,t))=w(x,t).
\end{equation}

\begin{lemma}\label{lem2.1}
The smooth profile $(\tilde n^R,\tilde q^R)$ defined by \eqref{nR}
has the following properties:
\begin{enumerate}
\item[(1).]
It satisfies the inviscid system
\begin{equation}\label{nRqR}
\begin{cases}
\tilde n^R_t-(\tilde n^R\tilde q^R)_x=0,\\
\tilde q^R_t-\tilde n^R_x=0.
\end{cases}
\end{equation}

\item[(2).]
\[
\lim_{t\to\infty}
\sup_{x\in\mathbb R}
|(\tilde n^R,\tilde q^R)(x,t)-(n^r,q^r)(x,t)|
=0.
\]

\item[(3).]
For every $0<t_0<T<\infty$,
\[
(\tilde n^R,\tilde q^R)(\cdot,t)
-(n^r,q^r)(\cdot,t)
\in C([t_0,T];H^1(\mathbb R)).
\]
\end{enumerate}
\end{lemma}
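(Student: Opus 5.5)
For (1), the plan is to exploit the Riemann invariant structure. Write $U=(\tilde n^R,\tilde q^R)$ and $A(U)$ for the Jacobian matrix in \eqref{equivalent system}. By construction, $h_1(U)$ is constant, so $\nabla h_1(U)\cdot U_x=\nabla h_1(U)\cdot U_t=0$. Since $\nabla h_1\cdot r_1=0$ by \eqref{RId} and $\nabla h_1\neq0$, both $U_x$ and $U_t$ are parallel to $r_1(U)$. I will therefore write $U_x=\alpha\, r_1(U)$ and $U_t=\beta\, r_1(U)$. Differentiating $\lambda_1(U)=w$ gives
\[
\alpha\,\nabla\lambda_1\cdot r_1=w_x,\qquad \beta\,\nabla\lambda_1\cdot r_1=w_t .
\]
By \eqref{genuinely nonlinear} the factor $\nabla\lambda_1\cdot r_1$ is nonzero, so $\beta=-w\alpha$ because $w_t=-ww_x$. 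Then
\[
U_t+A(U)U_x=(\beta+\alpha\lambda_1(U))r_1=(\beta+w\alpha)r_1=0,
\]
and this is exactly \eqref{nRqR} written in conservative form. Smoothness of $U$ comes from the local inverse of $(n,q)\mapsto(\lambda_1,h_1)$ on a neighborhood of the compact rarefaction segment, combined with the smoothness of $w$ from Lemma \ref{lem2.1ref}.

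For (2), both $(\tilde n^R,\tilde q^R)$ and $(n^r,q^r)$ are images of $(w,h_1(n_-,q_-))$ and $(w^r,h_1(n_-,q_-))$ under the same smooth inverse map $\Phi$. The map $\Phi$ is Lipschitz on the compact set $[\lambda_-,\lambda_m]\times\{h_1(n_-,q_-)\}$. Hence
\[
|(\tilde n^R,\tilde q^R)-(n^r,q^r)|\le C|w-w^r|,
\]
and Lemma \ref{lem2.1ref}(5) gives the claim.

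For (3), the same Lipschitz bound reduces the $L^2$ part to showing $w-w^r\in C([t_0,T];L^2)$. Outside the fan, $w^r$ is constant, and parts (3)–(4) of Lemma \ref{lem2.1ref} give exponential tails. For $x\le\lambda_-t$ this is direct. For $x\ge\lambda_m t$ (with $x<0$), I will instead use the characteristic representation: $x_0\ge x-\lambda_m t$, and $\lambda_m-w_0(x_0)\le C\delta_R e^{-2x_0}$. Inside the fan the difference is bounded on an interval of length $|\lambda_m-\lambda_-|t$. So $w-w^r\in L^2$ uniformly for $t\in[t_0,T]$.

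The derivative term needs more care. Here $\partial_x\Phi(w^r,\cdot)=\Phi'(w^r)w^r_x$, and $w^r_x$ is the bounded, piecewise-constant function $\mathbf 1_{\{\lambda_-t<x<\lambda_mt\}}/t$. I will write
\[
\partial_x(\Phi(w)-\Phi(w^r))=\Phi'(w)(w_x-w^r_x)+(\Phi'(w)-\Phi'(w^r))w^r_x .
\]
Each piece is in $L^2$: $w_x$ decays exponentially outside the fan, and $w^r_x$ is compactly supported and bounded for $t\ge t_0$.

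Continuity in time is the main technical point, because $w^r_x$ jumps across the fan edges. I expect to handle it by dominated convergence. For each fixed $x$, the functions $t\mapsto w(x,t)$, $w_x(x,t)$, $w^r(x,t)$ are continuous, and $t\mapsto w^r_x(x,t)$ is continuous off the two lines $x=\lambda_\pm t$ (lines of measure zero in $x$ at each time). The common bounds are uniform exponential tails plus a uniformly bounded compact support for $t\in[t_0,T]$. These give an $L^2$ majorant, so the $H^1$ norm of the difference at time $s$ converges to that at time $t$ as $s\to t$. I expect this continuity-in-time estimate near the fan edges to be the only delicate step; the rest is routine.
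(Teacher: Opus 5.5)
Your proposal is correct and follows the paper's proof. Part (1) is the same computation as the paper's formula $\Phi_1'(w)=r_1/(\nabla\lambda_1\cdot r_1)$, which gives $U_t+A(U)U_x=\frac{w_t+\lambda_1 w_x}{\nabla\lambda_1\cdot r_1}\,r_1=0$, and parts (2)--(3) likewise rest on Lipschitz/smooth dependence through the inverse map $\Phi_1$ together with Lemma~\ref{lem2.1ref}. Your treatment of (3) is more careful than the paper's: you cover the strip $\lambda_m t\le x<0$, which Lemma~\ref{lem2.1ref}(4) does not reach, via the characteristic representation, and you justify continuity in time across the fan edges, where $w^r_x$ jumps, by dominated convergence with a uniform $L^2$ majorant on $[t_0,T]$.
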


\begin{proof}
Let $\Phi_1$ denote the local inverse of
$(\lambda_1,h_1)$ on the rarefaction segment, with the second component
fixed at $h_1(n_-,q_-)$.  Then
\[
(\tilde n^R,\tilde q^R)=\Phi_1(w),
\qquad
\Phi_1'(w)
=
\frac{r_1}{\nabla\lambda_1\cdot r_1}.
\]
Therefore
\begin{equation}\label{2.13}
\binom{\tilde n^R}{\tilde q^R}_t
+
\begin{pmatrix}
-\tilde q^R&-\tilde n^R\\
-1&0
\end{pmatrix}
\binom{\tilde n^R}{\tilde q^R}_x
=
\frac{w_t+\lambda_1w_x}
{\nabla\lambda_1\cdot r_1}\,r_1
=0,
\end{equation}
which proves (1).  Assertion (2) follows from Lemma
\ref{lem2.1ref}-(5) and smooth invertibility of $\Phi_1$.
For each $t\ge t_0>0$, both $w-w^r$ and
$\partial_x(w-w^r)$ belong to $L^2(\mathbb R)$; this follows from the
finite rarefaction fan and the exponential tails
\eqref{w-w-}--\eqref{w-w+}.  Their dependence on $t$ is continuous on
$[t_0,T]$.  Smoothness of $\Phi_1$ then gives (3).
\end{proof}

\begin{lemma}\label{properties of nRqR}
Let
\[
\Lambda(x,t)
:=
\lambda_1(\tilde n^R(x,t),\tilde q^R(x,t)).
\]
The smooth approximate rarefaction satisfies:
\begin{enumerate}
\item[(1).]
\[
\tilde n^R_x<0,\qquad
\tilde q^R_x<0,\qquad
\tilde n^R_t<0,\qquad
\tilde q^R_t<0
\qquad (x\in\mathbb R,\ t>0).
\]

\item[(2).]
For every $1\le p\le\infty$,
\begin{equation}\label{nRx}
\begin{aligned}
\|(\tilde n^R_x,\tilde q^R_x)(t)\|_{L^p}
&\le
C_p\min\{\delta_R,
\delta_R^{1/p}t^{-1+1/p}\},\\
\|(\tilde n^R_{xx},\tilde q^R_{xx})(t)\|_{L^p}
&\le
C_p\min\{\delta_R,t^{-1}\},
\qquad t>0.
\end{aligned}
\end{equation}

\item[(3).]
For $x\le\lambda_-t$,
\begin{equation}\label{nR-n}
\begin{aligned}
&|(\tilde n^R,\tilde q^R)(x,t)-(n_-,q_-)|\\
&\qquad
+|(\tilde n^R_x,\tilde q^R_x)(x,t)|
\le
C\delta_R e^{-c|x-\lambda_-t|}.
\end{aligned}
\end{equation}

\item[(4).]
For $x\ge0$,
\begin{equation}\label{nR-nx>0}
\begin{aligned}
&|(\tilde n^R,\tilde q^R)(x,t)-(n_m,q_m)|\\
&\qquad
+|(\tilde n^R_x,\tilde q^R_x)(x,t)|
\le
C\delta_R e^{-c(x+|\lambda_m|t)}.
\end{aligned}
\end{equation}

\item[(5).]
The following identities are exact:
\begin{equation}\label{rare-exact-prelim}
\tilde n^R_x=-\Lambda\tilde q^R_x,
\qquad
\tilde q^R_t=\tilde n^R_x=-\Lambda\tilde q^R_x,
\qquad
\tilde n^R_t=\Lambda^2\tilde q^R_x.
\end{equation}
In particular,
\[
|\tilde n^R_t|
+|\tilde q^R_t|
\le
C|(\tilde n^R_x,\tilde q^R_x)|.
\]
\end{enumerate}
\end{lemma}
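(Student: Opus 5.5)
The proof rests on the representation $(\tilde n^R,\tilde q^R)=\Phi_1(w)$ from the proof of Lemma~\ref{lem2.1}, with
\[
\Phi_1'(w)=\frac{r_1}{\nabla\lambda_1\cdot r_1},
\qquad
r_1=(-\lambda_1,1)^T,
\]
combined with the properties of $w$ in Lemma~\ref{lem2.1ref}. The plan has two parts: determine the signs of the components of $\Phi_1'$, and bound $\Phi_1$ and its derivatives uniformly on the compact rarefaction segment.

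\emph{Step 1: signs of $\Phi_1'$.} By \eqref{genuinely nonlinear},
\[
\nabla\lambda_1\cdot r_1=-1-\frac{q}{\sqrt{q^2+4n}}.
\]
Since $n>0$ gives $|q|<\sqrt{q^2+4n}$, this quantity is strictly negative, and it is bounded away from $0$ on the compact segment. Hence the $q$-component of $\Phi_1'$, namely $1/(\nabla\lambda_1\cdot r_1)$, is negative. The $n$-component is $-\lambda_1/(\nabla\lambda_1\cdot r_1)$. On the segment $\lambda_1\le\lambda_m<0$, so $-\lambda_1>0$ and this component is negative as well.

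\emph{Step 2: the sign claims in (1).} Lemma~\ref{lem2.1ref}(1) gives $w_x>0$, so both $\tilde n^R_x$ and $\tilde q^R_x$ are negative. The Burgers equation gives $w_t=-ww_x$. Since $w<\lambda_m<0$, we get $w_t>0$, and therefore $\tilde n^R_t<0$ and $\tilde q^R_t<0$.

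\emph{Step 3: the identities in (5).} Reading off the components of $\Phi_1'$ gives $\partial_w\tilde n^R=-\Lambda\,\partial_w\tilde q^R$. Multiplying by $w_x$ yields $\tilde n^R_x=-\Lambda\tilde q^R_x$, and multiplying by $w_t$ yields $\tilde n^R_t=-\Lambda\tilde q^R_t$. The second equation of \eqref{nRqR} gives $\tilde q^R_t=\tilde n^R_x=-\Lambda\tilde q^R_x$. Substituting this into the previous relation gives $\tilde n^R_t=\Lambda^2\tilde q^R_x$. Since $\Lambda$ is bounded on the segment, the pointwise bound on $|\tilde n^R_t|+|\tilde q^R_t|$ follows.

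\emph{Step 4: the $L^p$ bounds in (2).} The first derivatives satisfy $|(\tilde n^R_x,\tilde q^R_x)|\le C w_x$, so the first line of \eqref{nRx} follows directly from \eqref{wx,wxx}. For the second derivatives, differentiate again:
\[
\partial_x^2\Phi_1(w)=\Phi_1''(w)\,w_x^2+\Phi_1'(w)\,w_{xx}.
\]
The $w_{xx}$ term is handled by \eqref{wx,wxx}. For the quadratic term, I would use
\[
\|w_x^2\|_{L^p}\le\|w_x\|_{L^\infty}\|w_x\|_{L^p}\le C\min\{\delta_R,t^{-1}\}\cdot C\delta_R,
\]
which is bounded by $C\min\{\delta_R,t^{-1}\}$ since $\delta_R\le\delta_0$ is bounded.

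\emph{Step 5: the tail estimates (3) and (4).} These follow from the Lipschitz continuity of $\Phi_1$, together with $\Phi_1(\lambda_-)=(n_-,q_-)$ and $\Phi_1(\lambda_m)=(n_m,q_m)$, applied to \eqref{w-w-}--\eqref{w-w+}. The derivative tails follow from $|(\tilde n^R_x,\tilde q^R_x)|\le Cw_x$.

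The main point needing care is the uniformity of the constants. Every bound above uses that $\Phi_1,\Phi_1',\Phi_1''$ are bounded and that $\nabla\lambda_1\cdot r_1$ stays away from $0$. This holds because the states on the rarefaction segment remain in a compact subset of $\{n>0\}$, which I would ensure by taking $\delta_0$ small.
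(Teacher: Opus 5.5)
Your proposal is correct and follows essentially the same route as the paper. Both arguments read off the signs and the identity $\tilde n^R_x=-\Lambda\tilde q^R_x$ from $\Phi_1'(w)=r_1/(\nabla\lambda_1\cdot r_1)$ together with $w_x>0$, and both obtain (2)--(4) from Lemma~\ref{lem2.1ref} using the smoothness of $\Phi_1$ and the expansion $\Phi_1''(w)w_x^2+\Phi_1'(w)w_{xx}$. The one small difference is that you get the time-derivative identities from the $q$-equation in \eqref{nRqR}, whereas the paper uses $w_t=-ww_x$ with $w=\Lambda$ directly; both are equally valid.
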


\begin{proof}
From \eqref{nR},
\[
\binom{\tilde n^R_x}{\tilde q^R_x}
=
\frac{w_x}{\nabla\lambda_1\cdot r_1}r_1.
\]
Since $w_x>0$, $\lambda_1<0$, and
$\nabla\lambda_1\cdot r_1<0$ on the compact rarefaction segment, both
components are negative and
\[
\tilde n^R_x=-\Lambda\tilde q^R_x.
\]
Also
\[
\binom{\tilde n^R_t}{\tilde q^R_t}
=
-\Lambda
\binom{\tilde n^R_x}{\tilde q^R_x},
\]
because $w_t=-ww_x$ and $w=\Lambda$.  This proves
\eqref{rare-exact-prelim} and the signs in (1).  The estimates in
(2)--(4) follow from Lemma \ref{lem2.1ref} and the smoothness of the
inverse map $\Phi_1$; for second derivatives one uses
$\Phi_1''(w)w_x^2+\Phi_1'(w)w_{xx}$.
\end{proof}

\subsection{Traveling wave}
We now consider the 2-viscous shock connecting the left state
$(n_m,q_m)$ to the right state $(n_+,q_+)$, with
\[
(n_m,q_m)\in S_2(n_+,q_+).
\]
The Rankine--Hugoniot conditions are
\begin{equation}\label{R-H condition'}
\begin{cases}
-\sigma(n_+-n_m)-(n_+q_+-n_mq_m)=0,\\
-\sigma(q_+-q_m)-(n_+-n_m)=0.
\end{cases}
\end{equation}
On this 2-shock branch,
\begin{equation}\label{lax}
n_m>n_+>0,\qquad
q_m<q_+,
\end{equation}
and the Lax inequalities are
\[
\lambda_2(n_+,q_+)<\sigma<\lambda_2(n_m,q_m).
\]
The corresponding inviscid shock is
\begin{equation}\label{nsqs}
(n^s,q^s)(x,t)=
\begin{cases}
(n_m,q_m),&x<\sigma t,\\
(n_+,q_+),&x>\sigma t.
\end{cases}
\end{equation}
Let
\[
\delta_S:=n_m-n_+>0.
\]
From \eqref{R-H condition'},
\[
q_+-q_m=\frac{\delta_S}{\sigma},
\qquad
\sigma^2+q_m\sigma-n_+=0,
\]
hence
\begin{equation}\label{Wave speed}
\sigma
=
\frac{-q_m+\sqrt{q_m^2+4n_+}}{2}>0.
\end{equation}

With $\xi=x-\sigma t$, the viscous shock profile satisfies
\begin{equation}\label{traveling wave}
\begin{cases}
-\sigma\tilde n^S_\xi
-(\tilde n^S\tilde q^S)_\xi
=\tilde n^S_{\xi\xi},\\
-\sigma\tilde q^S_\xi-\tilde n^S_\xi=0,\\
(\tilde n^S,\tilde q^S)(-\infty)=(n_m,q_m),\\
(\tilde n^S,\tilde q^S)(+\infty)=(n_+,q_+).
\end{cases}
\end{equation}
After integration and use of \eqref{R-H condition'}, one obtains
\begin{equation}\label{nS'}
\tilde n^S_\xi
=
\frac{(\tilde n^S-n_m)(\tilde n^S-n_+)}{\sigma},
\end{equation}
together with
\begin{equation}\label{shock-q-relation-prelim}
\tilde q^S-q_m
=
-\frac{\tilde n^S-n_m}{\sigma},
\qquad
\tilde q^S_\xi
=
-\frac1\sigma\tilde n^S_\xi.
\end{equation}
Fixing the translation by
\[
\tilde n^S(0)=\frac{n_m+n_+}{2},
\]
the profile is explicitly
\begin{equation}\label{nSqS}
\tilde n^S(\xi)
=
n_+
+
\frac{\delta_S}
{1+e^{\frac{\delta_S}{\sigma}\xi}},
\qquad
\tilde q^S(\xi)
=
q_m+\frac{n_m-\tilde n^S(\xi)}{\sigma}.
\end{equation}
Thus
\begin{equation}\label{nS'<0}
\tilde n^S_\xi<0,
\qquad
\tilde q^S_\xi=-\frac1\sigma\tilde n^S_\xi>0.
\end{equation}

\begin{lemma}\label{properties of the 2-viscous shock wave}
Let
\begin{equation}\label{-sigma}
\sigma_m
:=
\frac{-q_m+\sqrt{q_m^2+4n_m}}{2}
=
\lambda_2(n_m,q_m).
\end{equation}
There exist $\delta_1>0$ and $C>0$, independent of $\delta_S$, such
that for $0<\delta_S<\delta_1$,
\begin{equation}\label{-sigma/2}
0<\frac{\sigma_m}{2}
\le
\sigma_m-C\delta_S
\le
\sigma
<
\sigma_m.
\end{equation}
Moreover, for every $\xi\in\mathbb R$,
\begin{equation}\label{N'}
-\frac{\delta_S^2}{\sigma}
e^{-\frac{\delta_S}{\sigma}|\xi|}
\le
\tilde n^S_\xi(\xi)
\le
-\frac{\delta_S^2}{4\sigma}
e^{-\frac{\delta_S}{\sigma}|\xi|},
\end{equation}
and
\begin{equation}\label{N''}
\begin{aligned}
&|(\tilde n^S,\tilde q^S)(\xi)-(n_m,q_m)|
\le
C\delta_S e^{-c\delta_S|\xi|},
\qquad \xi<0,\\
&|(\tilde n^S,\tilde q^S)(\xi)-(n_+,q_+)|
\le
C\delta_S e^{-c\delta_S|\xi|},
\qquad \xi>0,\\
&|(\tilde n^S_{\xi\xi},\tilde q^S_{\xi\xi})|
\le
C\delta_S
|(\tilde n^S_\xi,\tilde q^S_\xi)|,\\
&|(\tilde n^S_{\xi\xi\xi},\tilde q^S_{\xi\xi\xi})|
\le
C\delta_S^2
|(\tilde n^S_\xi,\tilde q^S_\xi)|,\\
&\|(\tilde n^S_\xi,\tilde q^S_\xi)\|_{L^1}
\le C\delta_S,\qquad
\|(\tilde n^S_\xi,\tilde q^S_\xi)\|_{L^2}
\le C\delta_S^{3/2},\\
&\|(\tilde n^S_\xi,\tilde q^S_\xi)\|_{L^\infty}
\le C\delta_S^2.
\end{aligned}
\end{equation}
\end{lemma}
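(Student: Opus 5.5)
The plan is to read everything off the explicit formula \eqref{nSqS}, the Riccati equation \eqref{nS'}, and the relation $\tilde q^S_\xi=-\tilde n^S_\xi/\sigma$ from \eqref{shock-q-relation-prelim}; every estimate for $\tilde q^S$ then follows from the one for $\tilde n^S$.

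\emph{The speed bounds \eqref{-sigma/2}.} I compare \eqref{Wave speed} with \eqref{-sigma}. The map $n\mapsto(-q_m+\sqrt{q_m^2+4n})/2$ is strictly increasing, and its derivative $1/\sqrt{q_m^2+4n}$ is bounded on a compact neighbourhood of $(n_+,q_+)$. Since $n_+=n_m-\delta_S<n_m$, this gives
\[
0<\sigma_m-\sigma\le C\delta_S .
\]
Choosing $\delta_1$ small enough that $C\delta_1\le\sigma_m/2$ yields the lower bound $\sigma_m/2\le\sigma_m-C\delta_S$. Here $\sigma_m>0$ is bounded below uniformly because $n_m>0$. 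The constant $C$ depends only on $(n_+,q_+)$ and a neighbourhood of it. This is uniform because $(n_m,q_m)$ lies within $O(\delta_1)$ of $(n_+,q_+)$ along $S_2$.

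\emph{The pointwise bound \eqref{N'}.} Set $k=\delta_S/\sigma$. Differentiating \eqref{nSqS} gives
\[
\tilde n^S_\xi=-\frac{\delta_S^2}{\sigma}\,\frac{e^{k\xi}}{(1+e^{k\xi})^2}.
\]
For $\xi\ge0$ I write $e^{k\xi}/(1+e^{k\xi})^2=e^{-k\xi}/(1+e^{-k\xi})^2$, so by symmetry the quotient equals $e^{-k|\xi|}/(1+e^{-k|\xi|})^2$. This lies between $\tfrac14e^{-k|\xi|}$ and $e^{-k|\xi|}$, which is exactly \eqref{N'}.

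\emph{Tails and higher derivatives \eqref{N''}.} For $\xi<0$,
\[
n_m-\tilde n^S=\delta_S\,\frac{e^{k\xi}}{1+e^{k\xi}}\le\delta_S e^{-k|\xi|}.
\]
For $\xi>0$, $\tilde n^S-n_+\le\delta_S e^{-k\xi}$. The $q$-components follow from \eqref{nSqS}. By \eqref{-sigma/2}, $k\ge\delta_S/\sigma_m=:c\,\delta_S$ with $c$ independent of $\delta_S$.

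Differentiating \eqref{nS'} gives
\[
\tilde n^S_{\xi\xi}=\frac{2\tilde n^S-n_m-n_+}{\sigma}\,\tilde n^S_\xi .
\]
Since $|2\tilde n^S-n_m-n_+|\le\delta_S$, this yields $|\tilde n^S_{\xi\xi}|\le(\delta_S/\sigma)|\tilde n^S_\xi|$. Differentiating once more,
\[
\tilde n^S_{\xi\xi\xi}=\frac{2(\tilde n^S_\xi)^2}{\sigma}+\frac{2\tilde n^S-n_m-n_+}{\sigma}\,\tilde n^S_{\xi\xi}.
\]
I bound the first term using $|\tilde n^S_\xi|\le\delta_S^2/(4\sigma)$, which is the maximum from the explicit formula, and the second by the previous step. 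This gives the $C\delta_S^2|\tilde n^S_\xi|$ bound.

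The norms follow directly from the explicit formula:
\begin{itemize}
\item $\|\tilde n^S_\xi\|_{L^1}=\delta_S$ by monotonicity;
\item $\|\tilde n^S_\xi\|_{L^\infty}\le\delta_S^2/(4\sigma)$;
\item $\|\tilde n^S_\xi\|_{L^2}^2\le\|\tilde n^S_\xi\|_{L^\infty}\|\tilde n^S_\xi\|_{L^1}\le C\delta_S^3$.
\end{itemize}

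The only real obstacle is ensuring that all constants are independent of $\delta_S$. This rests entirely on the uniform lower bound $\sigma\ge\sigma_m/2$ from the first step, together with $\sigma$ being bounded above. So I establish \eqref{-sigma/2} first and use $1/\sigma\le 2/\sigma_m$ throughout.
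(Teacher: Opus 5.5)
Your proposal is correct and follows the paper's proof essentially step for step. You get the speed comparison from the explicit formula for $\sigma$ (via the mean value theorem, where the paper writes $\sigma_m-\sigma$ out directly), \eqref{N'} from the explicit derivative of \eqref{nSqS}, the higher-derivative bounds by differentiating \eqref{nS'}, and the $q$-estimates from $\tilde q^S_\xi=-\tilde n^S_\xi/\sigma$. The only cosmetic difference is that you obtain the $L^2$ bound by interpolating between $L^1$ and $L^\infty$ rather than integrating \eqref{N'} directly.
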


\begin{proof}
The comparison \eqref{-sigma/2} follows from
\[
\sigma_m-\sigma
=
\frac{2\delta_S}
{\sqrt{q_m^2+4n_m}+\sqrt{q_m^2+4n_+}}
\]
and the positivity of the fixed end states.  From \eqref{nSqS},
\[
\tilde n^S_\xi
=
-\frac{\delta_S^2}{\sigma}
\frac{e^{\frac{\delta_S}{\sigma}\xi}}
{\left(1+e^{\frac{\delta_S}{\sigma}\xi}\right)^2}.
\]
For $y>0$,
\[
\frac14\min\{y,y^{-1}\}
\le
\frac{y}{(1+y)^2}
\le
\min\{y,y^{-1}\},
\]
which gives \eqref{N'}.  The first two estimates in \eqref{N''} follow
from the explicit profile and \eqref{shock-q-relation-prelim}.
Differentiating \eqref{nS'} once and twice yields the second- and
third-derivative bounds.  The $L^p$ estimates follow by integrating
\eqref{N'} and using \eqref{shock-q-relation-prelim}.
\end{proof}

\subsection{Construction of the shift function $\mathbf X(t)$}
Define
\begin{equation}\label{4.a}
a(\xi)
:=
1+\frac{\lambda}{\delta_S}
\big(n_m-\tilde n^S(\xi)\big),
\qquad
\lambda:=\sqrt{\delta_S}.
\end{equation}
Then
\[
1<a(\xi)<1+\lambda,
\]
and
\begin{equation}\label{4.a'}
a_\xi(\xi)
=
-\frac{\lambda}{\delta_S}\tilde n^S_\xi(\xi)>0,
\qquad
\int_{\mathbb R}a_\xi\,d\xi=\lambda.
\end{equation}

We work in the shock frame
\[
\xi=x-\sigma t.
\]
The system \eqref{1.1} becomes
\begin{equation}\label{1D orginal model' rewrite}
\begin{cases}
n_t-\sigma n_\xi-(nq)_\xi=n_{\xi\xi},\\
q_t-\sigma q_\xi-n_\xi=0.
\end{cases}
\end{equation}

For a time-dependent shift $\mathbf X(t)$, set
\[
(\tilde n^S)^{-\mathbf X}(\xi,t)
:=
\tilde n^S(\xi-\mathbf X(t)),
\qquad
(\tilde q^S)^{-\mathbf X}(\xi,t)
:=
\tilde q^S(\xi-\mathbf X(t)),
\]
and
\[
a^{-\mathbf X}(\xi,t)
:=
a(\xi-\mathbf X(t)),
\qquad
a_\xi^{-\mathbf X}(\xi,t)
:=
a_\xi(\xi-\mathbf X(t)).
\]
The smooth shifted composite profile is
\begin{equation}\label{n-x}
\begin{aligned}
\tilde n^{-\mathbf X}(\xi,t)
&=
\tilde n^R(\xi+\sigma t,t)
+(\tilde n^S)^{-\mathbf X}(\xi,t)-n_m,\\
\tilde q^{-\mathbf X}(\xi,t)
&=
\tilde q^R(\xi+\sigma t,t)
+(\tilde q^S)^{-\mathbf X}(\xi,t)-q_m.
\end{aligned}
\end{equation}

We define $\mathbf X(t)$ by
\begin{equation}\label{X}
\begin{cases}
\begin{aligned}
\dot{\mathbf X}(t)
={}&
\frac{M}{\lambda}
\left[
\int_{\mathbb R}
a^{-\mathbf X}a_\xi^{-\mathbf X}
\frac{n-\tilde n^{-\mathbf X}}
{\tilde n^{-\mathbf X}}\,d\xi\right.\\
&\left.\hspace{2.5cm}
+
\int_{\mathbb R}
a^{-\mathbf X}a_\xi^{-\mathbf X}
\frac{\varphi(n)}{\sigma}\,d\xi
\right]
=:F(t,\mathbf X(t)),
\end{aligned}\\
\mathbf X(0)=0,
\end{cases}
\end{equation}
where
\[
M
:=
\frac{6\sigma_m^3n_m}
{(\sigma_m^2+n_m)^2},
\]
and
\begin{equation}\label{Varphi(n)}
\varphi(n)
:=
\frac1\sigma
\left[
\Pi(n|\tilde n^{-\mathbf X})
+
\left(
1+
\frac{\delta_S}{\lambda}
\frac{a^{-\mathbf X}}{\tilde n^{-\mathbf X}}
\right)
(n-\tilde n^{-\mathbf X})
\right].
\end{equation}

On any time interval on which $n$ and $\tilde n^{-\mathbf X}$ remain
in a fixed compact subset of $(0,\infty)$, the map
$X\mapsto F(t,X)$ is locally Lipschitz.  This follows from the smooth
exponential localization of the shock profile and its derivatives.
Hence standard ODE theory yields a unique absolutely continuous
solution $\mathbf X(t)$ on the interval of existence of the PDE
solution.

Under the small perturbation regime used in Proposition
\ref{4 A priori estimates}, Lemma \ref{4.local inequality} gives
\[
|\varphi(n)|
\le
C|n-\tilde n^{-\mathbf X}|.
\]
Using \eqref{4.a'} and
$\int_{\mathbb R}a_\xi^{-\mathbf X}d\xi=\lambda$, we obtain
\begin{equation}\label{4.supF}
|\dot{\mathbf X}(t)|
=
|F(t,\mathbf X(t))|
\le
C
\|n-\tilde n^{-\mathbf X}\|_{L^\infty}.
\end{equation}
In particular, on every finite interval on which the perturbation is
bounded in $H^1$,
\begin{equation}\label{Xt}
|\mathbf X(t)|
\le Ct.
\end{equation}

For later use, introduce
\[
N(\xi,t):=\tilde n^R(\xi+\sigma t,t),
\qquad
Q(\xi,t):=\tilde q^R(\xi+\sigma t,t).
\]
By \eqref{nRqR},
\begin{equation}\label{nRqR'}
\begin{cases}
N_t-\sigma N_\xi-(NQ)_\xi=0,\\
Q_t-\sigma Q_\xi-N_\xi=0.
\end{cases}
\end{equation}
Combining \eqref{nRqR'} with the shifted shock equations, the composite
profile \eqref{n-x} satisfies
\begin{equation}\label{approximate combination}
\left\{
\begin{aligned}
&(\tilde n^{-\mathbf X})_t
-\sigma(\tilde n^{-\mathbf X})_\xi
+\dot{\mathbf X}(\tilde n^S)_{\xi}^{-\mathbf X}
-(\tilde n^{-\mathbf X}\tilde q^{-\mathbf X})_\xi\\
&\qquad
=
(\tilde n^{-\mathbf X})_{\xi\xi}+F_1+F_2,\\
&(\tilde q^{-\mathbf X})_t
-\sigma(\tilde q^{-\mathbf X})_\xi
+\dot{\mathbf X}(\tilde q^S)_{\xi}^{-\mathbf X}
-(\tilde n^{-\mathbf X})_\xi
=0.
\end{aligned}
\right.
\end{equation}
Here
\[
(\tilde n^S)_{\xi}^{-\mathbf X}(\xi,t)
:=
\tilde n^S_\xi(\xi-\mathbf X(t)),
\qquad
(\tilde q^S)_{\xi}^{-\mathbf X}(\xi,t)
:=
\tilde q^S_\xi(\xi-\mathbf X(t)),
\]
and
\begin{equation}\label{F1F2}
\begin{aligned}
F_1
&:=
(\tilde n^S)_{\xi\xi}^{-\mathbf X}
-(\tilde n^{-\mathbf X})_{\xi\xi}
=
-N_{\xi\xi},\\
F_2
&:=
\big[
NQ
+(\tilde n^S)^{-\mathbf X}(\tilde q^S)^{-\mathbf X}
-\tilde n^{-\mathbf X}\tilde q^{-\mathbf X}
\big]_\xi\\
&=
-\partial_\xi
\left[
(N-n_m)\big((\tilde q^S)^{-\mathbf X}-q_m\big)
+\big((\tilde n^S)^{-\mathbf X}-n_m\big)(Q-q_m)
\right].
\end{aligned}
\end{equation}
Subtracting \eqref{approximate combination} from
\eqref{1D orginal model' rewrite} yields
\begin{equation}\label{4.perturbed system}
\left\{
\begin{aligned}
&(n-\tilde n^{-\mathbf X})_t
-\sigma(n-\tilde n^{-\mathbf X})_\xi
-\dot{\mathbf X}(\tilde n^S)_{\xi}^{-\mathbf X}
-(nq-\tilde n^{-\mathbf X}\tilde q^{-\mathbf X})_\xi\\
&\qquad
=
(n-\tilde n^{-\mathbf X})_{\xi\xi}-F_1-F_2,\\
&(q-\tilde q^{-\mathbf X})_t
-\sigma(q-\tilde q^{-\mathbf X})_\xi
-\dot{\mathbf X}(\tilde q^S)_{\xi}^{-\mathbf X}
-(n-\tilde n^{-\mathbf X})_\xi
=0.
\end{aligned}
\right.
\end{equation}

\subsection{Composite wave of a rarefaction wave and a viscous shock}
Given the end states $(n_\pm,q_\pm)$, we assume that there exists a
unique intermediate state $(n_m,q_m)$ such that
\begin{equation}\label{R1S2}
(n_m,q_m)\in R_1(n_-,q_-),
\qquad
(n_m,q_m)\in S_2(n_+,q_+).
\end{equation}
The corresponding inviscid-viscous composite wave is
\begin{equation}\label{composite wave}
\left(
n^r(x/t)+\tilde n^S(x-\sigma t)-n_m,\,
q^r(x/t)+\tilde q^S(x-\sigma t)-q_m
\right),
\qquad t>0,
\end{equation}
where $(n^r,q^r)$ is defined by \eqref{nr,qr} and
$(\tilde n^S,\tilde q^S)$ is the viscous shock profile above.

\section{Proof of main results}
\label{Sect.3}
In this section, we investigate the stability of the composite wave for the system \eqref{1.1}-\eqref{i d} and prove Theorem \ref{composite wave theorem}. Applying the standard iteration method, one can readily derive the local well-posedness of the system \eqref{1D orginal model' rewrite}. The routine and tedious proof details are omitted for brevity.

\begin{proposition}[Local existence]\label{Local existence}
Let $\underline{n}$ and $\underline{q}$ be smooth monotone functions such that
\begin{equation}\label{smooth monotone}
  \underline{n}(x)=n_\pm, \ \underline{q}(x)=q_\pm,\  \text{for}\ \pm\geq 1.
\end{equation}
For any constants \( M_0, M_1, \kappa_0, \bar{\kappa}_0, \kappa_1, \bar{\kappa}_1 \), with \( M_1 > M_0 > 0 \) and  \( \bar{\kappa}_1 > \bar{\kappa}_0 > \kappa_1 > 0 \), there exists a constant \( T_0 > 0 \) such that if
\begin{align*}
& \|n_0 - \underline{n}\|_{H^1(\mathbb{R})} + \|q_0 - \underline{q}\|_{H^1(\mathbb{R})} \leq M_0, \\
 & 0 < \kappa_0 \leq n_0(x) \leq \bar{\kappa}_0, \quad \forall x \in \mathbb{R},
\end{align*}
then system \eqref{1D orginal model' rewrite} has a unique solution \((n, q)\) on \([0, T_0]\) such that
\begin{align*}
  & n - \underline{n} \in C([0, T_0]; H^1(\mathbb{R}))\cap L^2(0, T_0; H^2(\mathbb{R})), \\
   & q - \underline{q} \in C([0, T_0]; H^1(\mathbb{R})),
\end{align*}
and
\begin{align*}
  \sup_{0\leq t\leq T_0}\left\|(n-\underline{n},
q-\underline{q})(\cdot, t)
\right\|_{H^{1}}\leq M_{1}.
\end{align*}
Moreover,
\begin{align}\label{n}
\kappa_1\leq n(x,t)\leq \bar{\kappa}_1, \quad \forall (x,t) \in \mathbb{R}\times [0, T_0].
\end{align}
\end{proposition}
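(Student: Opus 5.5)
The plan is a contraction-mapping (iteration) argument in the natural energy space. First I would rewrite the problem in perturbation variables $u:=n-\underline n$, $v:=q-\underline q$. Since $\underline n,\underline q$ are smooth and constant outside $[-1,1]$, the system \eqref{1D orginal model' rewrite} becomes
\[
u_t-\sigma u_\xi-u_{\xi\xi}=\big((u+\underline n)(v+\underline q)\big)_\xi+\sigma\underline n_\xi+\underline n_{\xi\xi},\qquad v_t-\sigma v_\xi-u_\xi=\sigma\underline q_\xi+\underline n_\xi .
\]
The forcing terms are compactly supported smooth functions, so they lie in every $H^k$. I would work in the space
\[
X_T:=\Big\{(u,v):u\in C([0,T];H^1)\cap L^2(0,T;H^2),\ v\in C([0,T];H^1),\ \sup_t\|(u,v)\|_{H^1}\le M_1',\ \kappa_1\le u+\underline n\le\bar\kappa_1\Big\},
\]
where $M_1'$ is chosen so that $\|(n-\underline n,q-\underline q)\|_{H^1}\le M_1$.

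\textbf{Step 1: the linearized map.} Given $(\hat u,\hat v)\in X_T$, define $(u,v)$ by solving two linear problems. The first is the heat equation
\[
u_t-\sigma u_\xi-u_{\xi\xi}=\big((\hat u+\underline n)(\hat v+\underline q)\big)_\xi+\text{forcing}.
\]
The right side lies in $L^2(0,T;L^2)$, with norm at most $C(M_1')T^{1/2}$, by the $H^1\subset L^\infty$ embedding. The second is the transport equation
\[
v_t-\sigma v_\xi=\hat u_\xi+\text{forcing}.
\]
Standard parabolic $H^1$ energy estimates give
\[
\sup_t\|u\|_{H^1}^2+\int_0^T\|u_{\xi\xi}\|^2\le \|u_0\|_{H^1}^2+C(M_1')T .
\]
For $v$, I would instead feed in the new $u$, i.e. use $v_t-\sigma v_\xi=u_\xi+\dots$, which is an explicit solution along characteristics. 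Then
\[
\|v(t)\|_{H^1}\le\|v_0\|_{H^1}+\int_0^t\|u_\xi\|_{H^1}\,d\tau\le \|v_0\|_{H^1}+C\,t^{1/2}\|u\|_{L^2H^2}+Ct .
\]
This is exactly why the $L^2H^2$ regularity of $u$ is needed: the $q$-component gains no smoothing and must borrow one derivative from $u$.

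\textbf{Step 2: self-map and lower/upper bounds.} Taking $T_0$ small depending on $M_0,M_1,\kappa$'s, the above bounds keep the $H^1$ norms below $M_1'$. For the pointwise bounds I would use $u\in C([0,T];L^2)$ with $u_t\in L^2(0,T;L^2)$ and bounded $H^1$ norm. Interpolation then gives
\[
\|u(t)-u_0\|_{L^\infty}\le C\|u(t)-u_0\|^{1/2}\|u(t)-u_0\|_{H^1}^{1/2}\le C(M_1)\,T^{1/4}.
\]
Since $\kappa_1<\kappa_0$ and $\bar\kappa_0<\bar\kappa_1$, a small $T_0$ preserves $\kappa_1\le n\le\bar\kappa_1$, which is \eqref{n}.

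\textbf{Step 3: contraction, in a weaker norm.} This is the main obstacle, because the derivative loss in the $q$-equation prevents contraction in the top norm. For two iterates, the differences $(\delta u,\delta v)$ satisfy linear equations with forcing $\big(\hat n\,\delta\hat v+\hat q\,\delta\hat u\big)_\xi$. I would estimate
\[
\sup_t\big(\|\delta u\|^2+\|\delta v\|^2\big)+\int_0^T\|\delta u_\xi\|^2 .
\]
In this norm the $u$-energy estimate absorbs the derivative by integration by parts onto $\delta u_\xi$. The $v$-equation then needs $\|\delta u_\xi\|_{L^1_tL^2}\le T^{1/2}\|\delta u_\xi\|_{L^2L^2}$. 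Together these give a contraction factor $C(M_1')T^{1/2}<1$ in the $L^\infty L^2\times(L^2H^1)$ metric.

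The ball $X_T$ is closed in this weaker metric by weak-$*$ lower semicontinuity, so the Banach fixed-point theorem yields a unique solution. Continuity in time with values in $H^1$, rather than merely weak continuity, follows from the energy equality for $u$ and from the characteristic representation for $v$. Uniqueness in the stated class follows from the same difference estimate.
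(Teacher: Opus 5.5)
Your argument is correct and is exactly the ``standard iteration method'' the paper invokes while omitting all details: a parabolic $H^1$ and $L^2(0,T;H^2)$ estimate for $n-\underline n$, the transport equation for $q-\underline q$ solved along characteristics with the new $u_\xi$ as source, and contraction in the weaker $L^\infty_tL^2$ metric (plus $L^2_tH^1$ for the $n$-component). Two small points. First, define the iteration set with $L^\infty(0,T;H^1)$ bounds rather than requiring membership in $C([0,T];H^1)\cap L^2(0,T;H^2)$, so that it is genuinely closed in the weak metric; you already recover continuity in time afterwards. Second, your use of $\kappa_1<\kappa_0$ is the intended hypothesis and is needed for the lower bound at $t=0$, even though the paper's stated ordering $\bar\kappa_1>\bar\kappa_0>\kappa_1>0$ does not literally include it.
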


To show the global existence result claimed in Theorem \ref{composite wave theorem}, in view of the local existence result and the standard continuation argument,
it suffices to establish the following \emph{a priori} estimate.

\begin{proposition}[\emph{A priori} estimate]\label{4 A priori estimates}
For a given end state 
  $(n_+,q_+)\in \mathbb{R}^+\times\mathbb{R}$, suppose that$(n,q)$ is the solution to \eqref{1D orginal model' rewrite} on $[0,T]$ for $T>0$, and  $(\tilde{n}^{-\mathbf{X}},\tilde{q}^{-\mathbf{X}})$ is defined in \eqref{n-x} with the shift function $\mathbf{X}(t)$. 
  Then there exist positive constants $\delta_0\leqq 1$, $\chi_{1}\leqq 1$ and $C_{0}$ independent of $T$, such that if both the rarefaction and shock waves strength satisfy $\delta_R,\ \delta_S<\delta_0$,
\begin{equation*}
\begin{aligned}
&n(t,x)-\tilde{n}^{-\mathbf{X}}\in C([0,T];H^{1}(\mathbb{R}))\cap L^{2}(0,T;H^{2}(\mathbb{R})),\\
&q(t,x)-\tilde{q}^{-\mathbf{X}}\in C([0,T];H^{1}(\mathbb{R})),
\end{aligned}
\end{equation*}
and
\begin{equation}\label{4.assumption}
\sup_{0\leq t\leq T}
\left\|(n-\tilde{n}^{-\mathbf{X}},q-\tilde{q}^{-\mathbf{X}})(\cdot, t)
\right\|_{H^{1}}\leq \chi_{1},
\end{equation}
then the following estimates hold\textup{:}
\begin{equation}\label{4.priori estimate}
\begin{aligned}
&\sup_{0\leq t\leq T}
\left\|(n-\tilde{n}^{-\mathbf{X}},q-\tilde{q}^{-\mathbf{X}})(\cdot,t)
\right\|_{H^{1}}^{2}
+\delta_S \int_{0}^{T}\left|\dot{\mathbf{X}}(t)\right|^{2}dt\\
&+\int_{0}^{T}
(\left\|\partial _{\xi}(n-\tilde{n}^{-\mathbf{X}})(\cdot,t)\right\|_{H^{1}}^{2}
+\left\|\partial _{\xi}(q-\tilde{q}^{-\mathbf{X}})(\cdot,t)\right\|^{2})dt\\
&\leq C_{0}\left\|(n_{0}-\tilde{n},q_{0}-\tilde{q})(\cdot)\right\|_{H^{1}}^{2}
+C_{0}\delta_R^{1/3}.
\end{aligned}
\end{equation}
and
\begin{equation}\label{4.dotX}
|\dot{\mathbf{X}}(t)|
\leq C_{0}
\|(n-\tilde{n}^{-\mathbf{X}})(\cdot,t)\|_{L^{\infty}}
,\quad \forall t\leq T.
\end{equation}
\end{proposition}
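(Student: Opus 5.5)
The proof has three layers. The first is a weighted, modulated relative-entropy ($L^2$) estimate with shift. The second is the recovery of the missing dissipation of the $q$-perturbation. The third is the first-order estimates. The bound \eqref{4.dotX} is already essentially \eqref{4.supF}: under \eqref{4.assumption} with $\chi_1$ small, Sobolev embedding keeps $n$ near $\tilde n^{-\mathbf X}$. Lemma~\ref{4.local inequality} then gives $|\varphi(n)|\le C|n-\tilde n^{-\mathbf X}|$, and $\int a_\xi=\lambda$ cancels the factor $M/\lambda$. So the work is in \eqref{4.priori estimate}.

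\textbf{Step 1 (basic estimate).} I would differentiate
\[
\int_{\mathbb R} a^{-\mathbf X} b\big[\Pi(n|\tilde n^{-\mathbf X})+\tfrac12|q-\tilde q^{-\mathbf X}|^2\big]d\xi,\qquad b=N/n_m,
\]
using \eqref{4.perturbed system}. The resulting terms group as follows.
\begin{itemize}
\item The shift term $\dot{\mathbf X}\,Y(t)$. By the choice \eqref{X}, $Y$ contains $-\frac{\lambda}{M}|\dot{\mathbf X}|^2$, giving the good term $-\delta_S|\dot{\mathbf X}|^2$ after the usual normalization.
\item The shock terms from $a_\xi$ and $\tilde n^S_\xi$. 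These are treated as in the $a$-contraction of \cite{K. Choi 20 M3,Liu}. One writes them in the characteristic combination $w=n/\tilde n-1$ (plus the $q$-part). A Poincaré-type inequality on $[0,1]$ in the variable $y=(n_m-\tilde n^S)/\delta_S$ shows that the bad quadratic term $\frac{\lambda}{\delta_S}\int a_\xi|w|^2$ is dominated by the diffusion $\int a b\,\tilde n|\partial_\xi w|^2$ plus the shift term. The constant $M$ is tuned exactly for this, using $\sigma\approx\sigma_m$ from \eqref{-sigma/2}.
\item The rarefaction terms. By Lemma~\ref{rarefaction-modulation-L34}, the indefinite $\mathbf B_9^{(b)}$ combines with the $b_t,b_\xi$ terms into $-\mathcal G_R$ plus remainders. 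The signs in Lemma~\ref{properties of nRqR}(1) and the exact identities \eqref{rare-exact-prelim} are what make $\mathcal G_R\ge0$.
\end{itemize}

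\textbf{Step 2 (interaction and source terms).} The sources $F_1=-N_{\xi\xi}$ and $F_2$ are estimated separately.
\begin{itemize}
\item For $F_1$, I would integrate by parts once and use $\|N_{\xi\xi}\|_{L^p}\le C\min\{\delta_R,t^{-1}\}$ from \eqref{nRx}. Interpolating the $L^1$ and $L^2$ bounds of $\|N_\xi\|$, $\|N_{\xi\xi}\|$ against the dissipation $\|\partial_\xi(n-\tilde n)\|$ gives a time integral of order $\delta_R^{1/3}$. This is where the $C_0\delta_R^{1/3}$ in \eqref{4.priori estimate} comes from.
\item For $F_2$, one factor is localized near the shock ($\xi\approx\mathbf X(t)$) and the other near the rarefaction ($x\lesssim\lambda_m t<0$). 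Since $\sigma>0$ and $|\mathbf X(t)|\le\sigma t/2$ while the perturbation is small, \eqref{nR-nx>0} and \eqref{N''} give a bound $C\delta_R\delta_S e^{-c\delta_S t}$ in $L^1\cap L^2$. This is integrable in time, uniformly after multiplying by $\delta_S^{-1}$.
\end{itemize}
All cubic terms are absorbed using $\chi_1$ small and Lemma~\ref{4.local inequality}.

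\textbf{Step 3 (hyperbolic dissipation).} To obtain $\int_0^T\|\partial_\xi(q-\tilde q)\|^2$, I would read the first equation of \eqref{4.perturbed system} as an identity for $\tilde n\,\partial_\xi(q-\tilde q)$ and multiply it by $\partial_\xi(q-\tilde q)$. Every term on the right is either dissipation of $n-\tilde n$ already controlled, or the time-derivative term $\int(n-\tilde n)_t\,\partial_\xi(q-\tilde q)$. The latter I would integrate by parts in time and in $\xi$, using the second equation to swap $\partial_t(q-\tilde q)$ for $\partial_\xi(n-\tilde n)+\sigma\partial_\xi(q-\tilde q)+\dot{\mathbf X}\tilde q^S_\xi$. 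Boundary terms are $\sup_t$ of $L^2$ quantities.

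\textbf{Step 4 (first-order estimates).} I would differentiate \eqref{4.perturbed system} in $\xi$ and test against $\partial_\xi(n-\tilde n)$ and $\tilde n\,\partial_\xi(q-\tilde q)$. This exploits the cancellation $-\int\tilde n\,\partial_\xi^2(n-\tilde n)\partial_\xi(q-\tilde q)-\int\partial_\xi(n-\tilde n)\,\partial_\xi(\tilde n\,\partial_\xi(q-\tilde q))\approx$ lower order, as in \cite{LiuRare2026}. This yields $\sup\|\partial_\xi(\cdot)\|^2+\int\|\partial_\xi^2(n-\tilde n)\|^2$, with errors controlled by Steps 1--3, $\delta_S|\dot{\mathbf X}|^2$, and integrable source norms.

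\textbf{Main obstacle.} I expect the hardest step to be Step 1: keeping the shock Poincaré structure intact while the modulation $b$ perturbs the weight. The issue is that $b\ne1$ near the shock only up to $O(\delta_R e^{-ct})$, and these errors must not cost a factor $\lambda/\delta_S$. I would first try to bound them by $\delta_R e^{-ct}\cdot\frac{\lambda}{\delta_S}\int a_\xi|w|^2$, which is small compared with the good terms when $\delta_R\ll1$.
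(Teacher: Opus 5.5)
Your architecture is the same as the paper's. It consists of the modulated entropy with $b=N/n_m$ and the conversion of $\mathbf B_9^{(b)}$ into $-\mathcal G_R$ (Lemma~\ref{rarefaction-modulation-L34}); the Poincar\'e argument in $y$ with $M$ tuned to cancel the mean; exponentially decaying interactions; recovery of $\int_0^t\|\partial_\xi(q-\tilde q)\|^2$ from the $n$-equation with an integration by parts in time; and a first-order estimate whose cross terms cancel exactly.

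\textbf{The gap.} It lies in how you treat what you call the main obstacle: the errors carrying $\rho:=b-1$ on the support of $a^{-\mathbf X}_\xi$. The same issue already affects the unmodulated remainder $\int a^{-\mathbf X}_\xi(|N-n_m|+|Q-q_m|)|w|^2d\xi$. Absorbing these into the good terms, as you propose, fails uniformly in the independent parameters $\delta_R,\delta_S$, for two reasons.

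First, it is not true that $\rho=O(\delta_Re^{-ct})$ wherever $a_\xi$ lives. The shock layer has width $O(1/\delta_S)$, so for $t\lesssim1/\delta_S$ the rarefaction sits inside it. On $\{\xi\le-\sigma t/2\}$ one only has $|\rho|\le C\delta_R$ and $a^{-\mathbf X}_\xi\le C\lambda\delta_Se^{-c\delta_St}$, hence $|\rho|a^{-\mathbf X}_\xi/|S_\xi|\sim\delta_R\lambda/\delta_S=\delta_R/\sqrt{\delta_S}$, which is not small.

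Second, even where $\rho$ is exponentially small, nothing controls $\int a_\xi|w|^2d\xi=\lambda\int_0^1w^2dy$.
\begin{itemize}
\item $\mathbf G_1$ is entirely spent cancelling $\mathbf B_1+\mathbf B_2$ at order $\lambda$, via $\sigma_m^2+q_m\sigma_m-n_m=0$.
\item $\mathbf G_2$ controls only the combination $q-\tilde q+\varphi(n)$.
\item What survives for $u$ alone is $G^s=\int|S_\xi||w|^2=(\delta_S/\lambda)\int a_\xi|w|^2=\delta_S\int_0^1w^2dy$.
\end{itemize}
So a bound $\delta_Re^{-ct}\int a_\xi|w|^2$ loses exactly the factor $\lambda/\delta_S$ you wanted to avoid. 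Your Step 1 has the same inversion. The term removed by Poincar\'e and the shift is $(\delta_S/\lambda)\int a_\xi|w|^2$. The quantity you wrote, $(\lambda/\delta_S)\int a_\xi|w|^2=\int_0^1w^2dy$, could never be dominated by a diffusion of size $\delta_S\int_0^1y(1-y)|\partial_yw|^2dy$.

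\textbf{The fix.} What is needed is time integrability plus Gronwall, not absorption. Splitting into $\Omega_\pm(t)$ gives
\[
\|\rho a^{-\mathbf X}_\xi\|_{L^\infty}+\|a^{-\mathbf X}_\xi(|N-n_m|+|Q-q_m|)\|_{L^\infty}\le\Gamma_S(t):=C\delta_R\delta_S^{3/2}\bigl(e^{-ct}+e^{-c\delta_St}\bigr).
\]
Moreover $\int_0^\infty\Gamma_S\,dt\le C\delta_R\sqrt{\delta_S}$, because the amplitude $\delta_S^{3/2}$ of $a_\xi$ compensates the slow decay rate $\delta_S$. One then bounds all these errors by $C\Gamma_S(t)\mathcal E_m(t)$ and closes with Gronwall, as in \eqref{Rrw2-L34-int}, \eqref{DeltaB12G12-L34} and \eqref{integrated-Gronwall-L34}.

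\textbf{Two smaller points.}
\begin{itemize}
\item For $F_1$ at the $L^2$ level, do not integrate by parts. Doing so pairs $N_\xi$ with $\partial_\xi(n-\tilde n)$, and $\int_0^\infty\|N_\xi\|^2dt$ diverges logarithmically, since $\|N_\xi\|^2\sim\delta_R/t$. Instead keep $N_{\xi\xi}$ and use $\|n-\tilde n\|_{L^\infty}\|N_{\xi\xi}\|_{L^1}$ with Gagliardo--Nirenberg. Then $\int_0^\infty\|N_{\xi\xi}\|_{L^1}^{4/3}dt\le C\delta_R^{1/3}$ is what produces the $\delta_R^{1/3}$.
\item Your Steps 3 and 4 are mutually dependent. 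Step 3 uses $\int_0^t\|\partial_\xi^2(n-\tilde n)\|^2$, which is produced only in Step 4. Step 4's error in turn contains $C(\chi_1+\delta_R+\delta_S)\int_0^t\|\partial_\xi(q-\tilde q)\|^2$. They close only because this coefficient is small, and you should say so explicitly.
\end{itemize}
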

By Lemma \ref{4.local inequality}, whenever  $|n-\tilde{n}^{-\mathbf{X}}|\ll 1$, there exist two positive constants $C_1$ and $C_2$ such that
\begin{equation}\label{4.phi n-ntilde}
C_1\left|n-\tilde{n}^{-\mathbf{X}}\right|^{2}
\leq \Pi(n|\tilde{n}^{-\mathbf{X}})
\leq C_2
\left|n-\tilde{n}^{-\mathbf{X}}\right|^{2}.
\end{equation}
Hence, to prove Proposition \ref{4 A priori estimates}, we first estimate the relative entropy $\Pi(n|\tilde{n}^{-\mathbf{X}})+
\frac{\left|q-\tilde{q}^{-\mathbf{X}}\right|^{2}}{2}$.

\emph{\bf{Notations.}} In what follows, we use the following notations for simplicity.
\begin{enumerate}
  \item $C$ denotes a positive constant which is independent of the small constants $\delta_0$, $\chi_1$, $\delta_S$, $\delta_R$, $\lambda$ and the time $T$.
  \item For any function  $f:\mathbb{R}^+\times \mathbb{R}\rightarrow \mathbb{R}$ and time-dependent shift  $\mathbf{X}(t)$, we set
      \begin{equation*}
        f^{\pm\mathbf{X}}(\xi,t):=f(\xi\pm \mathbf{X}(t),t).
      \end{equation*}
  \item We omit the dependence on  $\mathbf{X}(t)$ in \eqref{n-x} as follows:
  \begin{equation*}
  \begin{aligned}
        (\tilde{n}, \tilde{q})(\xi,t):= &\left(\tilde{n}^R(\xi+\sigma t,t)
 +\tilde{n}^S(\xi-\mathbf{X}(t))-n_m,\right.\\
&~~\left.\tilde{q}^R(\xi+\sigma t,t)
+\tilde{q}^S(\xi-\mathbf{X}(t))-q_m\right).
      \end{aligned}
      \end{equation*}
\end{enumerate}

\subsection{Relative entropy estimate}
In this subsection, we shall estimate the relative entropy function $\Pi(n|\tilde{n}^{-\mathbf{X}})
\frac{\left|q-\tilde{q}^{-\mathbf{X}}\right|^{2}}{2}$, which plays a crucial role in  deriving the $L^2$ energy estimate. To this end, we consider employing the classical relative entropy method, which originally introduced by Dafermos\cite{Dafermos 79} and  Diperna\cite{DiPerna79} in the context of establishing the $L^2$ stability and uniqueness of Lipschitz solutions to the  hyperbolic conservation laws  endowed with convex entropy.

To apply the relative entropy method, we rewrite \eqref{1.1} into the following general system of viscous conservation laws:
\begin{equation}\label{vis conservation}
  \partial_t U + \partial_\xi [A(U)] = \partial_\xi [M(U) \partial_\xi \nabla \eta(U)],
\end{equation}
where
\begin{equation}\label{U A(U)}
  \begin{aligned}
  &U := \begin{pmatrix} n \\ q \end{pmatrix}, \quad
A(U) := \begin{pmatrix} -nq - \sigma n \\ -n - \sigma q \end{pmatrix}, \quad
M(U) := \begin{pmatrix} n & 0 \\ 0 & 0 \end{pmatrix}, \\
  &\eta(U) := \frac{|q|^2}{2} + \Pi(n), \quad \text{with}\  \Pi(n) := n \ln n - n.
\end{aligned}
\end{equation}
Moreover,
\begin{equation}\label{eta'}
  \nabla \eta(U) = (\partial_n \eta(U) \; \partial_q \eta(U))
  = (\ln n \; q).
\end{equation}

Let
\begin{equation}\label{Utilde}
\widetilde{U}(\xi,t):=\begin{pmatrix}
\widetilde{n}(\xi,t) \\ \widetilde{q}(\xi,t)
 \end{pmatrix}
=\begin{pmatrix}\widetilde{n}^R(\xi,t)
 +(\widetilde{n}^S)^{-\mathbf{X}}(\xi)-n_m\\
\widetilde{q}^R(\xi,t)
+(\widetilde{q}^S)^{-\mathbf{X}}(\xi)-q_m
\end{pmatrix},
\end{equation}
then the above system \eqref{approximate combination} can be rewritten as
\begin{equation}\label{Utilde A(U)}
 \partial_t \widetilde{U} + \partial_\xi [A(\widetilde{U})] = \partial_\xi [M(\widetilde{U}) \partial_\xi \nabla \eta(\widetilde{U})]
 -\dot{\mathbf{X}}\partial_\xi
 ((\widetilde{U}^S)^{-\mathbf{X}})
 +\begin{pmatrix}
 F_1+F_2 \\ 0
 \end{pmatrix},
\end{equation}
where $F_1, F_2$ are defined in \eqref{F1F2}.

In view of the definition of the relative function in \eqref{fu,v}, for \( U_i = \binom{n_i}{q_i}, i = 1, 2 \), we have
\begin{equation}\label{A(U)}
  \begin{aligned}
  A(U_1 | U_2) =& A(U_1) - A(U_2) - \nabla A(U_2)(U_1 - U_2) \\
  =& \begin{pmatrix} -(n_1 - n_2)(q_1 - q_2) \\ 0 \end{pmatrix},
  \end{aligned}
\end{equation}
%
and
\begin{align}\label{eta}
  \eta(U_1 | U_2) = \eta(U_1) - \eta(U_2) - \nabla \eta(U_2)(U_1 - U_2) = \frac{|q_1 - q_2|^2}{2} + \Pi(n_1 | n_2),
\end{align}
where, since
 \(\Pi(n) = n \ln n - n\), one readily finds that
\begin{align}\label{Pin1n2'}
\Pi(n_1 | n_2) = n_1 \ln \left( \frac{n_1}{n_2} \right) - (n_1 - n_2).
\end{align}
We define the corresponding flux \( G(\cdot; \cdot) \) for our relative entropy \( \eta(\cdot | \cdot) \) by
\begin{equation}\label{GU1U2}
  \begin{aligned}
 G(U_1; U_2) :=& G(U_1) - G(U_2) - \nabla \eta(U_2)(A(U_1) - A(U_2))\\
=&-(q_1 - q_2)\Pi(n_1 | n_2) - q_2\Pi(n_1 | n_2) - (n_1 - n_2)(q_1 - q_2) - \sigma \eta(U_1 | U_2),
\end{aligned}
\end{equation}
where $G$ is the entropy flux of $\eta$, satisfying   \(\partial_i G(U) = \sum_{k=1}^2 \partial_k \eta(U) \partial_i A_k(U)\), \(1 \leq i \leq 2\), i.e.
\begin{align}\label{G}
G(U) := -qn \log n - \sigma \eta(U).
\end{align}

Below, we will estimate the relative entropy of the solution $U$ of \eqref{vis conservation} w.r.t. the shifted wave $\widetilde{U}$( see \eqref{Utilde}) as follows:
\begin{equation*}
  \eta(U(\xi, t)|\widetilde{U}(\xi, t))=\Pi(n|\tilde{n})+ \frac{\left|q-\tilde{q}\right|^{2}}{2}.
\end{equation*}

\begin{lemma}\label{relative entropy}
Let $a$ be the weight function defined by \eqref{4.a}. Let $U$ be a solution to \eqref{vis conservation}, and  $\widetilde{U}$ the shifted wave satisfying  \eqref{Utilde A(U)}. Then,
\begin{equation}\label{dt}
\begin{aligned}
\frac{d}{dt}\int_{\mathbb{R}}
a^{-\mathbf{X}}(\xi)
\left(\Pi(n|\tilde{n})
+\frac{\left|q-\tilde{q}\right|^{2}}{2}
\right)d\xi
=\mathbf{\dot{X}}(t)\mathbf{Y}(t)+\sum_{i=1}^{9}\mathbf{B_{i}}(t)-\mathbf{G}(t)-\mathbf{D}(t),
\end{aligned}
\end{equation}
where \allowbreak
\begin{align*}
\mathbf{Y}(t):=&-\int_{\mathbb{R}} a_{\xi}^{-\mathbf{X}} \eta(U | \widetilde{U}) d\xi+ \int_{\mathbb{R}} a^{-\mathbf{X}}\nabla^2 \eta(\widetilde{U})(U - \widetilde{U}) (\widetilde{U}^S)^{-\mathbf{X}}_\xi d\xi,\\
\mathbf{B_1}(t):=&\frac{\sigma}{2}\int_{\mathbb{R}}
a^{-\mathbf{X}}_{\xi}
\left|\varphi(n)\right|^{2}
d\xi,\\
\mathbf{B_{2}}(t):=&
-\int_{\mathbb{R}}
a^{-\mathbf{X}}_{\xi}\tilde{q}
\Pi(n|\tilde{n})
d\xi,\\
\mathbf{B_{3}}(t):=&
\int_{\mathbb{R}}a^{-\mathbf{X}}
(\tilde{n}^S)^{-\mathbf{X}}_{\xi\xi}
\frac{\Pi(n|\tilde{n})}{\tilde{n}}
d\xi,\\
\mathbf{B_{4}}(t):=&\int_{\mathbb{R}}
\left[a^{-\mathbf{X}}
\frac{(\tilde{n}^S)^{-\mathbf{X}}_{\xi}}
{\tilde{n}}-a^{-\mathbf{X}}_{\xi}\right]
n\ln \frac{n}{\tilde{n}}
\partial_{\xi}\left(\ln \frac{n}{\tilde{n}}\right)
d\xi,\\
\mathbf{B_{5}}(t):=&\int_{\mathbb{R}}
a^{-\mathbf{X}}
\frac{(\tilde{n}^R)_{\xi}}
{\tilde{n}}
n\ln \frac{n}{\tilde{n}}
\partial_{\xi}\left(\ln \frac{n}{\tilde{n}}\right)d\xi,\\
\mathbf{B_{6}}(t):=&\int_{\mathbb{R}}a^{-\mathbf{X}}
(\tilde{n}^R)_{\xi\xi}
\frac{\Pi(n|\tilde{n})}{\tilde{n}}d\xi,\\
\mathbf{B_{7}}(t):=&-\int_{\mathbb{R}}
a^{-\mathbf{X}}
\frac{n-\tilde{n}}{\tilde{n}}
F_1d\xi,\\
\mathbf{B_{8}}(t):=&-\int_{\mathbb{R}}
a^{-\mathbf{X}}
\frac{n-\tilde{n}}{\tilde{n}}
F_2d\xi,\\
\mathbf{B_{9}}(t):=&\int_{\mathbb{R}}
a^{-\mathbf{X}}
\frac{(\tilde{n}^R)_{\xi}}
{\tilde{n}}
(n-\tilde{n})(q-\tilde{q})d\xi,
\end{align*}
and \allowbreak
\begin{align*}
\mathbf{G}(t):=&\sigma\int_{\mathbb{R}}
a^{-\mathbf{X}}_{\xi}
\Pi\left(n|\tilde{n}\right)d\xi
+\frac{\sigma}{2}\int_{\mathbb{R}}
a^{-\mathbf{X}}_{\xi}
\left|q-\tilde{q}+\varphi(n)\right|^{2}
d\xi\\
\triangleq & \mathbf{G_{1}}(t)+\mathbf{G_{2}}(t),\\
\mathbf{D}(t):=&\int_{\mathbb{R}}
a^{-\mathbf{X}}n\left|\partial_{\xi}(\ln \frac{n}{\tilde{n}})\right|^{2}
d\xi,
\end{align*}
with $\varphi(n)$ given by \eqref{Varphi(n)}.
\end{lemma}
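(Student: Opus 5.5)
We prove Lemma \ref{relative entropy} by a direct computation in the spirit of the relative-entropy method of Dafermos and DiPerna. We differentiate under the integral and organize the resulting terms according to the structure of \eqref{vis conservation} and \eqref{Utilde A(U)}.

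\textbf{Step 1: the shift term.} Since $a^{-\mathbf X}(\xi,t)=a(\xi-\mathbf X(t))$, we have $\partial_t a^{-\mathbf X}=-\dot{\mathbf X}a_\xi^{-\mathbf X}$. This gives the first part of $\dot{\mathbf X}\mathbf Y$.

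\textbf{Step 2: the time derivative of the relative entropy.} Next we use the standard identity
\[
\partial_t\eta(U|\widetilde U)=\big(\nabla\eta(U)-\nabla\eta(\widetilde U)\big)\partial_tU-\nabla^2\eta(\widetilde U)(U-\widetilde U)\partial_t\widetilde U,
\]
and insert the two equations \eqref{vis conservation} and \eqref{Utilde A(U)}.

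\textbf{Step 3: the hyperbolic part.} The hyperbolic contributions are combined in the usual way into
\[
-\partial_\xi G(U;\widetilde U)-\partial_\xi\nabla\eta(\widetilde U)\,A(U|\widetilde U).
\]
Here $A(U|\widetilde U)=(-(n-\tilde n)(q-\tilde q),0)^T$ by \eqref{A(U)}, and $\partial_\xi\ln\tilde n=\tilde n_\xi/\tilde n$. Splitting $\tilde n_\xi=(\tilde n^S)^{-\mathbf X}_\xi+N_\xi$, the shock part is retained for later regrouping, while the rarefaction part is exactly $\mathbf B_9$. After multiplying by $a^{-\mathbf X}$ and integrating by parts, the flux term becomes $\int a_\xi^{-\mathbf X}G(U;\widetilde U)\,d\xi$, which is expanded via \eqref{GU1U2}.

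\textbf{Step 4: the source terms.} The $\dot{\mathbf X}$-source in \eqref{Utilde A(U)} yields the second part of $\mathbf Y$. The sources $F_1$ and $F_2$ appear tested against $\partial_n\eta$-differences, that is, against $(n-\tilde n)/\tilde n$, and give $\mathbf B_7$ and $\mathbf B_8$.

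\textbf{Step 5: the diffusion part.} Write
\[
n(\ln n)_\xi-\tilde n(\ln\tilde n)_\xi=n\,\partial_\xi\ln\frac n{\tilde n}+(n-\tilde n)\frac{\tilde n_\xi}{\tilde n}.
\]
Testing against $a^{-\mathbf X}\ln(n/\tilde n)$, the viscous contribution is
\[
-\int a^{-\mathbf X}\partial_\xi\!\Big(\ln\frac n{\tilde n}\Big)\Big[n\,\partial_\xi\ln\frac n{\tilde n}\Big]d\xi-\int a^{-\mathbf X}_\xi\ln\frac n{\tilde n}\,n\,\partial_\xi\ln\frac n{\tilde n}\,d\xi
\]
plus the terms involving $\tilde n_\xi/\tilde n$ and the second-derivative term from $\nabla^2\eta(\widetilde U)$. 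The first integral is $-\mathbf D$. The remaining terms involving $\tilde n_{\xi\xi}$ and $(n-\tilde n)\tilde n_\xi/\tilde n$ are rewritten by integrating by parts together with $\partial_\xi\Pi(n|\tilde n)=\ln(n/\tilde n)n_\xi-(n-\tilde n)\tilde n_\xi/\tilde n$. Splitting each $\tilde n$-derivative into its shock and rarefaction parts then produces $\mathbf B_3,\mathbf B_4$ (shock) and $\mathbf B_5,\mathbf B_6$ (rarefaction).

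\textbf{Step 6: regrouping the weight-flux terms (main obstacle).} The main bookkeeping step is the algebra of the $a_\xi^{-\mathbf X}$ terms coming from $G(U;\widetilde U)$. These are
\[
\sigma a_\xi\eta(U|\widetilde U),\qquad a_\xi(n-\tilde n)(q-\tilde q),\qquad a_\xi(q-\tilde q)\Pi(n|\tilde n),\qquad a_\xi\tilde q\,\Pi(n|\tilde n),
\]
together with the shock part of $\partial_\xi\ln\tilde n\,(n-\tilde n)(q-\tilde q)$. For the latter we use $a_\xi=-\frac{\lambda}{\delta_S}(\tilde n^S)^{-\mathbf X}_\xi$ to write
\[
a\,(\tilde n^S)_\xi/\tilde n=-\frac{\delta_S}{\lambda}\,\frac{a\,a_\xi}{\tilde n}.
\]
Collecting every term containing $(q-\tilde q)$ multiplied by $a_\xi$ then gives $-\sigma a_\xi(q-\tilde q)\varphi(n)$, with $\varphi$ exactly as in \eqref{Varphi(n)}. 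We then complete the square:
\[
-\frac\sigma2 a_\xi|q-\tilde q|^2-\sigma a_\xi(q-\tilde q)\varphi=-\frac\sigma2 a_\xi|q-\tilde q+\varphi|^2+\frac\sigma2 a_\xi|\varphi|^2 .
\]
This yields $-\mathbf G_2+\mathbf B_1$. The leftover $-\sigma a_\xi\Pi$ is $-\mathbf G_1$, and the term $-a_\xi\tilde q\,\Pi$ is $\mathbf B_2$. The delicate points are tracking signs, and verifying that the coefficient of $(n-\tilde n)$ in $\varphi$ matches the combination of the $a_\xi(n-\tilde n)(q-\tilde q)$ term and the shock part of $\mathbf B_9$-type term. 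I would check this last matching first, since it fixes the definition of $\varphi$.
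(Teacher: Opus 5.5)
Your proposal is correct and takes essentially the same route as the paper. Both use the Vasseur-type decomposition into the flux term $G(U;\widetilde U)$, the $A(U|\widetilde U)$ term, the diffusion terms and the source terms. Both split $\tilde n_\xi$ into its shock and rarefaction parts, so that the rarefaction part of the $A(U|\widetilde U)$ term gives $\mathbf B_9$. Both then use $(\tilde n^S)^{-\mathbf X}_\xi=-\frac{\delta_S}{\lambda}a^{-\mathbf X}_\xi$ to identify $\varphi(n)$ and complete the square to get $\mathbf B_1-\mathbf G_2$.

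The only difference is cosmetic and lies in the diffusion bookkeeping. The paper integrates $I_{5,2}$ by parts so that it cancels $I_4$, whereas you invoke the identity for $\partial_\xi\Pi(n|\tilde n)$; both lead to the same terms $\mathbf B_3,\dots,\mathbf B_6$.
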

\begin{remark}\label{R.4.1}
In \eqref{dt}, $\mathbf{G}(t)$ and $\mathbf{D}(t)$ are good terms, while $\sum\limits_{i=1}^{9}\mathbf{B_i}(t)$ consists of bad terms.
\end{remark}

\begin{proof}
By the definition of the relative entropy with \eqref{vis conservation} and \eqref{eta}, one first gets
\begin{align*}
&\frac{d}{dt} \int_{\mathbb{R}} a^{-\mathbf{X}}(\xi) \eta(U(\xi,t) | \widetilde{U}(\xi,t)) d\xi\\
=& -\dot{\mathbf{X}}(t) \int_{\mathbb{R}} a_{\xi}^{-\mathbf{X}} \eta(U | \tilde{U}) d\xi \\
&+ \int_{\mathbb{R}} a^{-\mathbf{X}} \left[ \left( \nabla \eta(U) - \nabla \eta(\widetilde{U}) \right) \partial_t U - \nabla^2 \eta(\widetilde{U})(U - \widetilde{U}) \partial_t \widetilde{U} \right] d\xi \\
=& -\dot{\mathbf{X}}(t) \int_{\mathbb{R}} a_{\xi}^{-\mathbf{X}} \eta(U | \widetilde{U}) d\xi \\
&+ \int_{\mathbb{R}} a^{-\mathbf{X}} \Bigg[ \left( \nabla \eta(U) - \nabla \eta(\widetilde{U}) \right) \left( -\partial_\xi A(U) + \partial_\xi \left( M(U) \partial_\xi \nabla \eta(U) \right) \right) \\
&\qquad- \nabla^2 \eta(\widetilde{U})(U - \widetilde{U}) \left( -\partial_\xi A(\widetilde{U}) + \partial_\xi \left( M(\widetilde{U}) \partial_\xi \nabla \eta(\widetilde{U}) \right) \right. \\
&\qquad\left. - \dot{\mathbf{X}} \partial_\xi \left( (\widetilde{U}^S)^{-\mathbf{X}} \right) + \begin{pmatrix} F_1+F_2\\0 \end{pmatrix}\right) \Bigg] d\xi.
\end{align*}
It follows from the definitions  \eqref{A(U)} and  \eqref{GU1U2} along with the same computation as in  \cite[Lemma 4]{Vasseur08} (see also \cite[Lemma 2.3]{M.-J. Kang 21}) that
\begin{equation*}
  \frac{d}{dt} \int_{\mathbb{R}} a^{-\mathbf{X}}(\xi)\eta( U(\xi, t)|\widetilde{U}(\xi, t))d\xi = \dot{\mathbf{X}}(t)\mathbf{Y}(t) + \sum_{i=1}^{6} I_i,
\end{equation*}
where,
\begin{align}\label{4 Dt}
  I_1 &:= -\int_{\mathbb{R}} a^{-\mathbf{X}}\partial_\xi G(U; \widetilde{U})d\xi, \nonumber\\
I_2 &:= -\int_{\mathbb{R}} a^{-\mathbf{X}}
\partial_\xi\nabla\eta(\widetilde{U})A(U|\widetilde{U})d\xi, \nonumber\\
I_3 &:= \int_{\mathbb{R}} a^{-\mathbf{X}}\left(\nabla\eta(U) - \nabla\eta(\widetilde{U})\right)
\partial_\xi\left(M(U)\partial_\xi(\nabla\eta(U) - \nabla\eta(\widetilde{U}))\right)d\xi, \\
I_4 &:= \int_{\mathbb{R}} a^{-\mathbf{X}}\left(\nabla\eta(U) - \nabla\eta(\widetilde{U})\right)
\partial_\xi\left((M(U) - M(\widetilde{U}))\partial_\xi\nabla\eta(\widetilde{U})\right)d\xi, \nonumber\\
I_5 &:= \int_{\mathbb{R}} a^{-\mathbf{X}}(\nabla\eta)(U|\widetilde{U})
\partial_\xi\left(M(\widetilde{U})\partial_\xi\nabla\eta(\tilde{U})\right)d\xi, \nonumber\\
I_6 &:= -\int_{\mathbb{R}} a^{-\mathbf{X}}\nabla^2\eta(\widetilde{U})(U - \widetilde{U})\begin{pmatrix} F_1+F_2 \\ 0 \end{pmatrix}d\xi.\nonumber
\end{align}
Performing an integration by parts, and using \eqref{A(U)} and \eqref{GU1U2},we are led to
\begin{align*}
  I_1=& \int_{\mathbb{R}} a^{-\mathbf{X}}_\xi G(U; \widetilde{U}) d\xi \\
  =& -\int_{\mathbb{R}} a^{-\mathbf{X}}_\xi (q - \tilde{q}) \Pi(n | \tilde{n}) d\xi - \int_{\mathbb{R}} a^{-\mathbf{X}}_\xi \tilde{q} \Pi(n | \tilde{n}) d\xi\\
  &- \int_{\mathbb{R}} a^{-\mathbf{X}}_\xi  (n - \tilde{n})(q - \tilde{q}) d\xi - \sigma \int_{\mathbb{R}} a^{-\mathbf{X}}_\xi  \eta(U | \widetilde{U}) d\xi.
\end{align*}
Applying \eqref{A(U)} again, together with \eqref{eta'}, yields
\begin{align*}
I_2 = \int_{\mathbb{R}} a^{-\mathbf{X}} \frac{\tilde{n}_\xi}{\tilde{n}} (n - \tilde{n})(q - \tilde{q}) d\xi.
\end{align*}
By integration by parts, we have
\begin{align*}
I_3 = &\int_{\mathbb{R}} a^{-\mathbf{X}} \ln \frac{n}{\tilde{n}} \partial_\xi\left(n\partial_\xi\left(\ln \frac{n}{\tilde{n}} \right) \right) d\xi\\
=&- \int_{\mathbb{R}} a^{-\mathbf{X}}_\xi n \left( \ln \frac{n}{\tilde{n}} \right) \partial_\xi \left( \ln \frac{n}{\tilde{n}} \right) d\xi- \int_{\mathbb{R}} a^{-\mathbf{X}} n \left| \partial_\xi \left( \ln \frac{n}{\tilde{n}} \right) \right|^2 d\xi,\\
I_4 = &\int_{\mathbb{R}} a^{-\mathbf{X}} \ln \frac{n}{\tilde{n}} \partial_\xi\left((n-\tilde{n})
\partial_\xi\ln \tilde{n}\right) d\xi\\
=&- \int_{\mathbb{R}} a^{-\mathbf{X}}_\xi\frac{n-\tilde{n}}{\tilde{n}}
\tilde{n}_\xi \ln \frac{n}{\tilde{n}} d\xi- \int_{\mathbb{R}} a^{-\mathbf{X}}\frac{n-\tilde{n}}{\tilde{n}}
\tilde{n}_\xi \partial_\xi\left(\ln \frac{n}{\tilde{n}}\right) d\xi.
\end{align*}
Using \eqref{eta'} and
\begin{equation}\label{eta''}
  \nabla^2 \eta(\widetilde{U}) = \begin{pmatrix}
  \frac{1}{\tilde{n}}& 0  \\
     0 & 1
                     \end{pmatrix},
\end{equation}
we obtain
\begin{align*}
I_5 = \int_{\mathbb{R}} a^{-\mathbf{X}} \frac{\tilde{n}_{\xi\xi}}{\tilde{n}} \Pi(n|\tilde{n})d\xi-\int_{\mathbb{R}} a^{-\mathbf{X}}\frac{n-\tilde{n}}{\tilde{n}}
\tilde{n}_{\xi\xi} \ln \frac{n}{\tilde{n}} d\xi\triangleq I_{5,1}+I_{5,2}.
\end{align*}
It follows from the definition of $(\tilde{n},\tilde{q})$ in  \eqref{n-x} that
\begin{align}\label{n'xi}
\tilde{n}_\xi=(\tilde{n}^R)_\xi
+(\tilde{n}^S)^{-\mathbf{X}}_\xi \ \text{and} \ \tilde{n}_{\xi\xi}=(\tilde{n}^R)_{\xi\xi}
+(\tilde{n}^S)^{-\mathbf{X}}_{\xi\xi},
\end{align}
which implies
\begin{align*}
I_{5,1} =\int_{\mathbb{R}} a^{-\mathbf{X}} \frac{(\tilde{n}^R)_{\xi\xi}}{\tilde{n}} \Pi(n|\tilde{n})d\xi
+\int_{\mathbb{R}} a^{-\mathbf{X}} \frac{(\tilde{n}^S)^{-\mathbf{X}}_{\xi\xi}}{\tilde{n}} \Pi(n|\tilde{n})d\xi.
\end{align*}
Integrating $I_{5,2}$ by parts and noting  \eqref{n'xi}, one can see that
\begin{align*}
I_{5,2} = &\int_{\mathbb{R}} a^{-\mathbf{X}}_\xi\frac{n-\tilde{n}}{\tilde{n}}
\tilde{n}_\xi \ln \frac{n}{\tilde{n}} d\xi+ \int_{\mathbb{R}} a^{-\mathbf{X}}\frac{n-\tilde{n}}{\tilde{n}}
\tilde{n}_\xi \partial_\xi\left(\ln \frac{n}{\tilde{n}}\right) d\xi\\
&+\int_{\mathbb{R}} a^{-\mathbf{X}}\frac{\tilde{n}_\xi}{\tilde{n}} n \left( \ln \frac{n}{\tilde{n}} \right) \partial_\xi \left( \ln \frac{n}{\tilde{n}} \right)d\xi\\
=&-I_{4}+\int_{\mathbb{R}} a^{-\mathbf{X}}\frac{(\tilde{n}^R)_\xi}{\tilde{n}} n \left( \ln \frac{n}{\tilde{n}} \right) \partial_\xi \left( \ln \frac{n}{\tilde{n}} \right)d\xi\\&+\int_{\mathbb{R}} a^{-\mathbf{X}}\frac{
(\tilde{n}^S)^{-\mathbf{X}}_\xi}{\tilde{n}} n \left( \ln \frac{n}{\tilde{n}} \right) \partial_\xi \left( \ln \frac{n}{\tilde{n}} \right)d\xi.
\end{align*}
A direct calculation by \eqref{eta''} yields
\begin{align*}
I_6 = -\int_{\mathbb{R}} a^{-\mathbf{X}} \frac{n-\tilde{n}}{\tilde{n}}F_1
d\xi-\int_{\mathbb{R}} a^{-\mathbf{X}}\frac{n-\tilde{n}}{\tilde{n}}
F_2 d\xi.
\end{align*}
Therefore, we have
\begin{equation}\label{I2-I6}
\begin{aligned}
\sum_{i=1}^{6} I_i =
&\sum_{i=2}^{9} \mathbf{B_i}(t)
- \mathbf{G_1}(t)-\mathbf{D}(t)\\
& -\int_{\mathbb{R}}
\left[ a^{-\mathbf{X}}_\xi\Pi(n| \tilde{n})
+(a^{-\mathbf{X}}_\xi
-a^{-\mathbf{X}} \frac{(\tilde{n}^S)^{-\mathbf{X}}_\xi}{\tilde{n}})(n- \tilde{n})\right](q- \tilde{q}) d\xi
-\sigma \int_{\mathbb{R}} a^{-\mathbf{X}}_\xi\frac{|q-\tilde{q}|^2}{2} d\xi.
\end{aligned}
\end{equation}
To handle the last two terms on the right side of \eqref{I2-I6}, we employ the elementary identity $-\alpha \theta^{2}-\beta \theta=-\alpha (\theta+\frac{\beta}{2\alpha})^{2}+\frac{\beta^{2}}{4\alpha}$ with $\alpha=\frac{1}{2}$, $\beta=\varphi(n)$ and $\theta=q-\tilde{q}$. It then follows from  $(\tilde{n}^S)^{-\mathbf{X}}_\xi
=-\frac{\delta_S}{\lambda}
a^{-\mathbf{X}}_\xi$ that the last two terms of \eqref{I2-I6} are equal to
\begin{equation}\label{I.1}
\begin{aligned}
&-\int_{\mathbb{R}}
\left[ a^{-\mathbf{X}}_\xi\Pi(n| \tilde{n})
+(a^{-\mathbf{X}}_\xi - a^{-\mathbf{X}} \frac{(\tilde{n}^S)^{-\mathbf{X}}_\xi}{\tilde{n}})(n- \tilde{n})\right](q- \tilde{q}) d\xi
-\sigma \int_{\mathbb{R}} a^{-\mathbf{X}}_\xi\frac{|q-\tilde{q}|^2}{2} d\xi\\
=&-\sigma \int_{\mathbb{R}} a^{-\mathbf{X}}_\xi\left[\varphi(n)(q- \tilde{q}) +\frac{|q-\tilde{q}|^2}{2}\right]d\xi\\
=&\frac{\sigma}{2}\int_{\mathbb{R}}
a^{-\mathbf{X}}_{\xi}
\left|\varphi(n)\right|^{2}d\xi
-\frac{\sigma}{2}\int_{\mathbb{R}}
a^{-\mathbf{X}}_{\xi}
\left (q-\tilde{q}+\varphi(n)\right)^{2}
d\xi\\
=&\mathbf{B_1}(t)-\mathbf{G_{2}}(t).
\end{aligned}
\end{equation}
Substituting \eqref{I.1} into \eqref{I2-I6}  gives \eqref{dt}. The proof of Lemma \ref{relative entropy} is complete.
\end{proof}

For $\mathbf{Y}$, we have from \eqref{eta} and  \eqref{eta''} that
  \begin{align*}
    \mathbf{Y}(t)= & -\int_{\mathbb{R}} a_{\xi}^{-\mathbf{X}} \eta(U | \widetilde{U}) d\xi+ \int_{\mathbb{R}} a^{-\mathbf{X}}\nabla^2 \eta(\widetilde{U})(U - \widetilde{U}) (\widetilde{U}^S)^{-\mathbf{X}}_\xi d\xi \\
    = &-\int_{\mathbb{R}}
a^{-\mathbf{X}}_{\xi}\left(\Pi(n|\tilde{n})
+\frac{\left|q-\tilde{q}\right|^{2}}{2}\right)
d\xi\\
&+\int_{\mathbb{R}}
a^{-\mathbf{X}}(\tilde{n}^S)^{-\mathbf{X}}_{\xi}
\frac{n-\tilde{n}}{\tilde{n}}d\xi
+\int_{\mathbb{R}}
a^{-\mathbf{X}}(\tilde{q}^S)^{-\mathbf{X}}_{\xi}
(q-\tilde{q})
d\xi.
  \end{align*}
In order to derive the nonlinear stability of the composite wave, we decompose the function $\mathbf{Y}(t)$ in Lemma \ref{relative entropy} as
$$\mathbf{Y}(t):=\sum_{i=1}^{7}\mathbf{Y}_{i}(t),$$
where
\allowdisplaybreaks[4]
\begin{align*}
&\mathbf{Y}_1(t):=-\frac{\delta_S}{\lambda}\int_{\mathbb{R}}
a^{-\mathbf{X}}a^{-\mathbf{X}}_{\xi}
\frac{n-\tilde{n}}{\tilde{n}}
d\xi,\\
&\mathbf{Y}_{2}(t):=-\frac{\delta_S}{\lambda}\int_{\mathbb{R}}
a^{-\mathbf{X}}a^{-\mathbf{X}}_{\xi}
\frac{\varphi(n)}{\sigma}
d\xi,\\
&\mathbf{Y}_{3}(t):=-\int_{\mathbb{R}}
a^{-\mathbf{X}}_{\xi}
\frac{\left|\varphi(n)\right|^{2}}{2}
d\xi,\\
&\mathbf{Y}_{4}(t):=-\int_{\mathbb{R}}
a^{-\mathbf{X}}_{\xi}
\Pi\left(n|\tilde{n}\right)
d\xi,\\
&\mathbf{Y}_{5}(t):=-\frac{1}{2}\int_{\mathbb{R}}
a^{-\mathbf{X}}_{\xi}
\left|q-\tilde{q}+\varphi(n)\right|^{2}
d\xi,\\
&\mathbf{Y}_{6}(t):=\int_{\mathbb{R}}
a^{-\mathbf{X}}_{\xi}\varphi(n)
\left(q-\tilde{q}+\varphi(n)\right)
d\xi,\\
&\mathbf{Y}_{7}(t):=\frac{\delta_S}{\sigma\lambda}\int_{\mathbb{R}}
a^{-\mathbf{X}}a^{-\mathbf{X}}_{\xi}
\left(q-\tilde{q}+\varphi(n)\right)
d\xi.
\end{align*}
Notice from \eqref{X}, the shift $\mathbf{X}(t)$ satisfies
\begin{equation}\label{X'}
\dot{\mathbf{X}}(t)=-\frac{M}{\delta_S}(\mathbf{Y}_1(t)
+\mathbf{Y}_{2}(t)),
\end{equation}
which yields
\begin{equation}\label{X'Y}
\dot{\mathbf{X}}(t)\mathbf{Y}(t)=
-\frac{\delta_S}{M}|\dot{\mathbf{X}}(t)|^{2}
+\dot{\mathbf{X}}(t)\sum_{i=3}^{7}\mathbf{Y}_{i}(t).
\end{equation}

We need the the following Poincar\'{e} type inequality (see \cite[Lemma 2.9]{M.-J. Kang 21}).

\begin{lemma}\label{Poincare}

For any $f:[0,1]\rightarrow\R$ satisfying $\int_0^1y(1-y)|f^\prime|^2dy<\infty$, it holds
\[\int_0^1\left|f-\int_0^1f(y)dy\right|^2dy\leq \frac{1}{2}\int_0^1y(1-y)|f^\prime|^2dy.\]

\end{lemma}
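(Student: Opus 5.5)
The plan is to reduce the inequality to the spectral theory of the Legendre operator on $[0,1]$. Set $w(y):=y(1-y)$ and consider the singular Sturm--Liouville operator $Lf:=-(w f')'$. I will use the shifted Legendre polynomials $P_k(y):=\mathcal P_k(2y-1)$, where $\mathcal P_k$ is the classical Legendre polynomial satisfying $((1-x^2)\mathcal P_k')'=-k(k+1)\mathcal P_k$. With $x=2y-1$ one has $1-x^2=4w(y)$ and $d/dx=\tfrac12 d/dy$, so a direct change of variables gives
\[
-(wP_k')'=k(k+1)P_k,\qquad k=0,1,2,\dots
\]
Thus $P_0\equiv1$, and the first nonzero eigenvalue is $2$. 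This value of $2$ is exactly where the constant $\tfrac12$ in Lemma~\ref{Poincare} comes from.

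First I prove the inequality for $f\in C^1([0,1])$. The family $\{P_k\}$ is orthogonal and complete in $L^2(0,1)$, since polynomials are dense. Write $f=\sum_k c_kP_k$ in $L^2$, so that $f-\int_0^1 f=\sum_{k\ge1}c_kP_k$ and
\[
\Big\|f-\int_0^1 f\Big\|_{L^2}^2=\sum_{k\ge1}c_k^2\|P_k\|^2 .
\]
Integrating by parts, with boundary terms vanishing because $w(0)=w(1)=0$, gives
\[
\int_0^1 wP_j'P_k'\,dy=k(k+1)\|P_k\|^2\delta_{jk}.
\]
So $\{P_k'\}_{k\ge1}$ is orthogonal in $L^2(w\,dy)$. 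Moreover, $\int_0^1 w f'P_k'\,dy=k(k+1)\int_0^1 fP_k\,dy=k(k+1)c_k\|P_k\|^2$, again by integration by parts using $f\in C^1$. Bessel's inequality in $L^2(w\,dy)$ applied to $f'$ with respect to the orthogonal family $\{P_k'\}$ then yields
\[
\int_0^1 w|f'|^2\,dy\ \ge\ \sum_{k\ge1}\frac{|\langle f',P_k'\rangle_w|^2}{\|P_k'\|_w^2}=\sum_{k\ge1}k(k+1)c_k^2\|P_k\|^2\ \ge\ 2\sum_{k\ge1}c_k^2\|P_k\|^2 .
\]
This is the claim, and using Bessel rather than Parseval avoids any completeness question in the weighted space.

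The only genuine obstacle is extending the result to general $f$ with $\int_0^1 w|f'|^2<\infty$. Such $f$ may blow up logarithmically at the endpoints, so even $f\in L^2$ must be derived rather than assumed. I would argue by approximation. For $\varepsilon\in(0,\tfrac14)$, let $f_\varepsilon(y):=f(\varepsilon+(1-2\varepsilon)y)$, which is absolutely continuous on $[0,1]$ with $f_\varepsilon'\in L^2$. Mollify $f_\varepsilon$ to get $C^1$ functions, which satisfy the inequality by the previous step; passing to the mollification limit, $f_\varepsilon$ satisfies it as well.

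A change of variables then shows $\int_0^1 w|f_\varepsilon'|^2\le C\int_0^1 w|f'|^2$ uniformly in $\varepsilon$. It also gives $\int_0^1 w|f_\varepsilon'|^2\to\int_0^1 w|f'|^2$, using the comparison $w(\varepsilon+(1-2\varepsilon)y)\ge (1-2\varepsilon)w(y)$ up to constants and dominated convergence. Hence the functions $f_\varepsilon-\int_0^1 f_\varepsilon$ are bounded in $L^2$. They converge pointwise to $f-\lim_{\varepsilon\to0}\int_0^1 f_\varepsilon$, where the means are controlled by evaluating at $y=\tfrac12$ together with the uniform bound. Fatou's lemma then gives $f\in L^2$ and the inequality for $f$. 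Finally, since $f\in L^2(0,1)$ is now known, $\int_0^1 f_\varepsilon\to\int_0^1 f$ by uniform integrability, and the constant in the limit is $\int_0^1 f$, which completes the proof.
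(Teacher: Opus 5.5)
Your argument is correct, and it is the standard Legendre spectral-gap proof: the operator $-(y(1-y)f')'$ has the shifted Legendre polynomials as eigenfunctions with eigenvalues $k(k+1)$, so the first nonzero eigenvalue is $2$. This is the mechanism behind the inequality the paper quotes from Kang--Vasseur without proof.

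One point in the limit step needs tightening. Before applying Fatou you only know that the means $\int_0^1 f_\varepsilon\,dy$ are bounded, not convergent. So pass to a subsequence, or skip identifying the limit constant $m$ altogether, since $\int_0^1|f-\bar f|^2\,dy\le\int_0^1|f-m|^2\,dy$ for every constant $m$. You should also state explicitly that $f$ is taken locally absolutely continuous on $(0,1)$. Your definition of $f_\varepsilon$ uses this, and without it the inequality fails (e.g.\ for the Cantor function).
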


\begin{lemma}\label{Leading order estimates}
Under the hypotheses of Proposition \ref{4 A priori estimates}, there exist
positive constants $M$ and $C$, independent of
$\delta_R,\delta_S,\chi_1$ and $t$, such that for every $t\in[0,T]$,
\begin{equation}\label{leading order estimates}
\begin{aligned}
&-\frac{\delta_S}{2M}|\dot{\mathbf X}(t)|^2
+\mathbf B_1(t)+\mathbf B_2(t)+\mathbf B_3(t)
-\mathbf G_1(t)-\frac34\mathbf D(t)\\
&\le
-C\int_{\mathbb R}
\left|(\tilde n^S)^{-\mathbf X}_\xi\right|
\left|\frac{n-\tilde n}{\tilde n}\right|^2\,d\xi
+C\int_{\mathbb R}a^{-\mathbf X}_\xi
\left|\frac{n-\tilde n}{\tilde n}\right|^3\,d\xi\\
&\quad
+\mathcal R_{\mathrm{rw},1}(t)
+\mathcal R_{\mathrm{rw},2}(t),
\end{aligned}
\end{equation}
where
\begin{equation}\label{rw-remainders-L32}
\begin{aligned}
\mathcal R_{\mathrm{rw},1}(t)
&:=
\frac{C}{\delta_S}
\left(
\int_{\mathbb R}
\left|(\tilde n^S)^{-\mathbf X}_\xi\right|
\left|\tilde n^R-n_m\right|
\left|\frac{n-\tilde n}{\tilde n}\right|\,d\xi
\right)^2,\\
\mathcal R_{\mathrm{rw},2}(t)
&:=
C\int_{\mathbb R}a^{-\mathbf X}_\xi
\left(
|\tilde n^R-n_m|+|\tilde q^R-q_m|
\right)
\left[
\left|\frac{n-\tilde n}{\tilde n}\right|^2
+\left|\frac{n-\tilde n}{\tilde n}\right|^3
\right]d\xi .
\end{aligned}
\end{equation}
Here and throughout this subsection,
$(\tilde n^R,\tilde q^R)$ is evaluated at $(\xi+\sigma t,t)$.
\end{lemma}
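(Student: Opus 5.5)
This is the Kang--Vasseur type leading-order estimate, localized to the shock layer. The plan is to reduce every term to quadratic forms in a single variable and then use the Poincaré inequality of Lemma~\ref{Poincare}.

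\textbf{Step 1: change of variables.} Set
\[
w:=\frac{n-\tilde n}{\tilde n},\qquad y:=\frac{n_m-(\tilde n^S)^{-\mathbf X}(\xi)}{\delta_S}\in(0,1).
\]
Then
\[
\frac{dy}{d\xi}=-\frac{(\tilde n^S)^{-\mathbf X}_\xi}{\delta_S}=\frac{a^{-\mathbf X}_\xi}{\lambda}>0 .
\]
By \eqref{nS'},
\[
\big|(\tilde n^S)^{-\mathbf X}_\xi\big|=\frac{\delta_S^2}{\sigma}\,y(1-y).
\]

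\textbf{Step 2: expand each term, keeping rarefaction errors aside.}

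For $\mathbf B_1$: by \eqref{Varphi(n)} and Lemma~\ref{4.local inequality}, $\varphi(n)=\frac{1}{\sigma}\big(1+\frac{\delta_S}{\lambda}\frac{a}{\tilde n}\big)\tilde n\,w+O(w^2)$. Since $\delta_S/\lambda=\lambda$ is small, the leading part is $\frac{\tilde n}{\sigma}w$.

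For $\mathbf B_2$ and $\mathbf G_1$: we have $\Pi(n|\tilde n)=\frac{\tilde n}{2}w^2+O(|w|^3)$.

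Throughout, wherever $\tilde n$ or $\tilde q$ appears I replace it by $(\tilde n^S)^{-\mathbf X}$, resp. $(\tilde q^S)^{-\mathbf X}$, and then $\tilde n^S$ by $n_m$ up to $O(\delta_S)$. I also use $\sigma=\sigma_m+O(\delta_S)$ from Lemma~\ref{properties of the 2-viscous shock wave}. Each replacement of $\tilde n$ by $\tilde n^S$ costs a factor $|\tilde n^R-n_m|+|\tilde q^R-q_m|$ multiplying $a_\xi(|w|^2+|w|^3)$; these contributions are exactly $\mathcal R_{\mathrm{rw},2}$.

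Combining $\mathbf B_1+\mathbf B_2-\mathbf G_1$ gives
\[
\int a_\xi\Big(\frac{n_m^2}{2\sigma_m}-\frac{q_m n_m}{2}-\frac{\sigma_m n_m}{2}\Big)w^2
=\frac{n_m}{2\sigma_m}\int a_\xi\,(n_m-\sigma_m^2-q_m\sigma_m)\,w^2 ,
\]
up to $O(\lambda+\delta_S)$ relative errors. Since $\sigma_m$ is a root of $\sigma^2+q_m\sigma-n_m=0$, this leading coefficient vanishes. This is why one must work at the next order, as in Kang--Vasseur. The $\delta_S$-order corrections come from $\tilde n^S-n_m=-\delta_S y$ and $\tilde q^S-q_m=\delta_S y/\sigma$, which produce terms like $\lambda\,\delta_S\int_0^1 (c_1y+c_2)w^2\,dy$. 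I expect this coefficient bookkeeping to be the main obstacle: the constant $M$ has to be matched so that what remains is $-\frac{\delta_S}{2M}|\dot{\mathbf X}|^2+C\lambda\delta_S\int_0^1 w^2\,dy$ minus a dissipation deficit.

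$\mathbf B_3$ is $O(\delta_S)\int|\tilde n^S_\xi|w^2$ by \eqref{N''}, so it is lower order.

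\textbf{Step 3: shift and diffusion terms.} From \eqref{X'}, $\dot{\mathbf X}=\frac{M}{\lambda}\int a\,a_\xi\,(w+\varphi/\sigma)$. After removing the rarefaction errors (their squared contribution is $\mathcal R_{\mathrm{rw},1}$), this gives
\[
\frac{\delta_S}{2M}|\dot{\mathbf X}|^2\ge c\,\delta_S\lambda\Big(\int_0^1 w\,dy\Big)^2-\text{errors}.
\]
For the diffusion I write $\mathbf D\ge\int \tilde n\,|w_\xi|^2(1-O(|w|))$ minus rarefaction-derivative errors, and convert it to the $y$ variable:
\[
\int \tilde n\,|w_\xi|^2\,d\xi\approx n_m\frac{\delta_S^2}{\sigma_m}\cdot\frac{\delta_S}{\lambda}\int_0^1 y(1-y)\,|w_y|^2\,dy\cdot\frac{\lambda}{\delta_S}\,,
\]
that is, of order $\delta_S\int_0^1 y(1-y)|w_y|^2\,dy$.

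\textbf{Step 4: close with Poincaré.} Lemma~\ref{Poincare} gives
\[
\int_0^1 w^2\,dy\le\Big(\int_0^1 w\,dy\Big)^2+\frac12\int_0^1 y(1-y)|w_y|^2\,dy .
\]
With the sharp constants of Kang--Vasseur, $\frac34\mathbf D$ together with the $\dot{\mathbf X}$ term then dominates the bad quadratic term with a margin. That margin yields the good term $-C\int|\tilde n^S_\xi|w^2$.

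\textbf{Step 5: collect remainders.}
\begin{itemize}
\item The cubic $w$ terms go into $C\int a_\xi|w|^3$.
\item Errors of relative size $\lambda$ or $\delta_S$ are absorbed by the margin.
\item The rarefaction deviation errors form $\mathcal R_{\mathrm{rw},1}+\mathcal R_{\mathrm{rw},2}$.
\end{itemize}
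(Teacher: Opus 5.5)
Your framework matches the paper's: the $y$-variable, the exact cancellation of the $O(\lambda)$ quadratic terms via $\sigma_m^2+q_m\sigma_m-n_m=0$, Poincar\'e for the fluctuation, the shift for the mean, and the rarefaction deviations collected into $\mathcal R_{\mathrm{rw},1},\mathcal R_{\mathrm{rw},2}$. However, your order counting at the decisive step is wrong, and as written the argument does not close.

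Because $\lambda=\sqrt{\delta_S}$, the only sources of a good sign, $\frac34\mathbf D$ and the shift term, are of order $\delta_S=\lambda\cdot\lambda$. That is relative size $\lambda$ compared with the cancelled leading terms. So your claim that ``errors of relative size $\lambda$ are absorbed by the margin'' is false: such an error has the same order as the margin, and its constant must be computed.

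The piece you discarded when you took $\tilde n w/\sigma$ as the leading part of $\varphi$ is exactly such a term. Write $\varphi=\frac1\sigma\big(\tilde n w+\lambda a w+\Pi(n|\tilde n)\big)$. Then the cross term in $\mathbf B_1=\frac{\sigma\lambda}{2}\int_0^1|\varphi|^2dy$ is
\[
\frac{\sigma\lambda}{2}\int_0^1\frac{2\lambda a\tilde n}{\sigma^2}\,w^2\,dy
=\frac{\delta_S}{\sigma}\int_0^1 a\tilde n\,w^2\,dy\approx\frac{\delta_S n_m}{\sigma_m}\int_0^1 w^2\,dy .
\]
This has the same size and the same constant as $\mathbf D\approx\frac{\delta_S n_m}{\sigma_m}\int_0^1y(1-y)|\partial_y w|^2dy$. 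It is the bad term the lemma is really about. It is beaten only through exact constants: $\frac34\cdot 2=\frac32>1$ with Poincar\'e constant $\frac12$, which leaves
\[
-\frac{\delta_S n_m}{2\sigma_m}\int_0^1 w^2\,dy+\frac{3\delta_S n_m}{2\sigma_m}\,\overline w^{\,2}.
\]
The contributions you single out as ``next order'' come from $\tilde n^S-n_m=-\delta_S y$, $\tilde q^S-q_m=\delta_S y/\sigma$ and $\sigma-\sigma_m$. These are only $O(\lambda\delta_S)\int_0^1w^2dy$, so they are genuinely negligible and need no bookkeeping.

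The shift term is also mis-scaled. Since $a^{-\mathbf X}_\xi\,d\xi=\lambda\,dy$, you have $\dot{\mathbf X}=M\int_0^1 a(w+\varphi/\sigma)\,dy\approx M(1+n_m/\sigma_m^2)\,\overline w$. Hence $\frac{\delta_S}{2M}|\dot{\mathbf X}|^2$ is of order $\delta_S\overline w^{\,2}$, not $\delta_S\lambda\overline w^{\,2}$. With your scaling the shift could not remove the mean term $\frac{3\delta_S n_m}{2\sigma_m}\overline w^{\,2}$ left by Poincar\'e. With the correct scaling, $-\beta^2\le-\frac{\alpha^2}{2}+(\alpha-\beta)^2$ gives $-\frac{\delta_S}{2M}|\dot{\mathbf X}|^2\le-\frac{M\delta_S(\sigma_m^2+n_m)^2}{4\sigma_m^4}\overline w^{\,2}+\dots$. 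The choice $M=6\sigma_m^3n_m/(\sigma_m^2+n_m)^2$ is precisely what makes this equal to $\frac{3\delta_S n_m}{2\sigma_m}\overline w^{\,2}$.

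Your Jacobian display for $\mathbf D$ is garbled. The correct identity is $\int|w_\xi|^2d\xi=\frac{\delta_S}{\sigma}\int_0^1y(1-y)|w_y|^2dy$. Also, $\mathbf D=\int a\frac{\tilde n^2}{n}|w_\xi|^2d\xi$ holds exactly, so there are no ``rarefaction-derivative errors''. The paper bounds the coefficient from below using $\tilde n\ge(\tilde n^S)^{-\mathbf X}\ge n_+$, which follows from $\tilde n^R\ge n_m$ by monotonicity. Errors multiplying $|w_\xi|^2$ would not fit the form of $\mathcal R_{\mathrm{rw},1}$ or $\mathcal R_{\mathrm{rw},2}$ anyway.
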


\begin{proof}
\emph{Step 1}. We first separate the shock part from the rarefaction part. Set
\[
r_n(\xi,t):=\tilde n^R(\xi+\sigma t,t)-n_m,\qquad
r_q(\xi,t):=\tilde q^R(\xi+\sigma t,t)-q_m,
\]
and
\[
s_n(\xi,t):=(\tilde n^S)^{-\mathbf X}(\xi)-n_m,\qquad
s_q(\xi,t):=(\tilde q^S)^{-\mathbf X}(\xi)-q_m.
\]
Then
\[
\tilde n-n_m=s_n+r_n,\qquad
\tilde q-q_m=s_q+r_q.
\]
By Lemma \ref{properties of the 2-viscous shock wave},
\begin{equation}\label{n-n-}
|\sigma-\sigma_m|\le C\delta_S,\qquad
\|s_n\|_{L^\infty}+\|s_q\|_{L^\infty}\le C\delta_S.
\end{equation}
In this lemma we do not estimate $r_n$ and $r_q$ by $\delta_S$; all terms containing
$r_n$ or $r_q$ will be kept explicitly until the end.

We rewrite the main terms in terms of the variables $y$ and $w$:
\begin{equation}\label{y}
y:=\frac{n_m-(\tilde n^S)^{-\mathbf X}(\xi)}{\delta_S},
\end{equation}
and
\begin{equation}\label{4.w(y)}
w(y):=\frac{n(\xi(y))-\tilde n(\xi(y))}
{\tilde n(\xi(y))}.
\end{equation}
By changing the variable $\xi\in(-\infty,\infty)\mapsto y\in(0,1)$,
it follows from \eqref{4.a} that
\begin{equation}\label{axi}
a^{-\mathbf X}(\xi)=1+\lambda y,
\end{equation}
and
\begin{equation}\label{dy}
\frac{dy}{d\xi}
=-\frac{1}{\delta_S}(\tilde n^S)^{-\mathbf X}_\xi
=\frac{1}{\lambda}a^{-\mathbf X}_\xi>0.
\end{equation}
In the sequel, by a slight abuse of notation, $r_n$ and $r_q$ also denote
$r_n(\xi(y),t)$ and $r_q(\xi(y),t)$ after the above change of variables.

\emph{Step 2}. We next estimate
$-\frac{\delta_S}{2M}|\dot{\mathbf X}(t)|^2$.
Due to \eqref{X'}, we first control $\mathbf Y_1(t)$ and $\mathbf Y_2(t)$.
From \eqref{y},
\[
\mathbf Y_1(t)=-\delta_S\int_0^1 a^{-\mathbf X}w\,dy,
\]
and hence, using $|a^{-\mathbf X}-1|\le\lambda$,
\begin{equation}\label{4.Y1}
\left|\mathbf Y_1(t)+\delta_S\int_0^1w\,dy\right|
\le \delta_S\lambda\int_0^1|w|\,dy.
\end{equation}

In the new variable,
\[
\mathbf Y_2(t)
=-\frac{\delta_S}{\sigma}\int_0^1a^{-\mathbf X}\varphi(n)\,dy.
\]
Since
$\left\|\frac n{\tilde n}-1\right\|_{L^\infty(\mathbb R)}
\le C\chi_1$ for $\chi_1$ small, it follows from \eqref{PI<} that
\begin{equation}\label{4.phi(n)}
\left|
\varphi(n)-\frac{\tilde n}{\sigma}
\left(\frac n{\tilde n}-1\right)
\right|
\le
C\left(\chi_1+\frac{\delta_S}{\lambda}\right)
\left|\frac n{\tilde n}-1\right|.
\end{equation}
Using \eqref{axi}, \eqref{n-n-}, and
$\tilde n=n_m+s_n+r_n$, we therefore obtain
\begin{equation}\label{4.Y2}
\begin{aligned}
\left|
\mathbf Y_2(t)+\frac{\delta_Sn_m}{\sigma_m^2}
\int_0^1w\,dy
\right|
\le{}&
C\delta_S
\left(\chi_1+\frac{\delta_S}{\lambda}\right)
\int_0^1|w|\,dy\\
&+C\delta_S\int_0^1|r_n|\,|w|\,dy .
\end{aligned}
\end{equation}
Here and below we use $\lambda=\sqrt{\delta_S}$, so that
$\lambda=\delta_S/\lambda$ and all shock-only coefficient errors are contained
in the first term on the right-hand side.

By \eqref{X'}, \eqref{4.Y1}, and \eqref{4.Y2},
\begin{equation*}
\begin{aligned}
&\left|
\dot{\mathbf X}(t)
-M\left(1+\frac{n_m}{\sigma_m^2}\right)
\int_0^1w\,dy
\right|\\
&\qquad\le
C\left(\chi_1+\frac{\delta_S}{\lambda}\right)
\int_0^1|w|\,dy
+C\int_0^1|r_n|\,|w|\,dy.
\end{aligned}
\end{equation*}
For arbitrary real numbers $\alpha,\beta$,
\[
-\beta^2\le -\frac{\alpha^2}{2}+(\alpha-\beta)^2.
\]
Taking
\[
\beta=\dot{\mathbf X}(t),\qquad
\alpha=M\left(1+\frac{n_m}{\sigma_m^2}\right)\int_0^1w\,dy,
\]
and using Cauchy--Schwarz, we have
\[
\begin{aligned}
-|\dot{\mathbf X}(t)|^2
\le{}&
-\frac{M^2(\sigma_m^2+n_m)^2}{2\sigma_m^4}
\left(\int_0^1w\,dy\right)^2\\
&+C\left(\chi_1+\frac{\delta_S}{\lambda}\right)^2
\int_0^1w^2\,dy
+C\left(\int_0^1|r_n|\,|w|\,dy\right)^2 .
\end{aligned}
\]
Multiplying by $\frac{\delta_S}{2M}$ gives
\begin{equation}\label{4.X^2}
\begin{aligned}
-\frac{\delta_S}{2M}|\dot{\mathbf X}(t)|^2
\le{}&
-\frac{M\delta_S(\sigma_m^2+n_m)^2}{4\sigma_m^4}
\left(\int_0^1w\,dy\right)^2\\
&+C\delta_S
\left(\chi_1+\frac{\delta_S}{\lambda}\right)^2
\int_0^1w^2\,dy\\
&+C\delta_S
\left(\int_0^1|r_n|\,|w|\,dy\right)^2 .
\end{aligned}
\end{equation}

\emph{Step 3}. We next estimate
$\mathbf B_1(t)+\mathbf B_2(t)+\mathbf B_3(t)-\mathbf G_1(t)$.
For later use, \eqref{PI>}--\eqref{PI<} imply, under the a priori assumption,
\begin{equation}\label{Pi-expansion-L32}
\Pi(n|\tilde n)
=
\frac{\tilde n}{2}
\left(w^2-\frac13w^3\right)+\mathcal E_\Pi,
\qquad
|\mathcal E_\Pi|\le C\chi_1|w|^3 .
\end{equation}
This two-sided expansion allows us to estimate $\mathbf B_2$ without imposing
any sign condition on $\tilde q$.

From the definition of $\varphi$ and \eqref{Pi-expansion-L32},
\begin{equation}\label{B1}
\begin{aligned}
\mathbf B_1(t)\le{}&
\lambda\frac{n_m^2}{2\sigma_m}\int_0^1w^2\,dy
+\delta_S\frac{n_m}{\sigma_m}\int_0^1w^2\,dy
+\lambda\frac{n_m^2}{2\sigma_m}\int_0^1w^3\,dy\\
&+C\delta_S(\lambda+\delta_S+\chi_1)\int_0^1w^2\,dy
+C\lambda(\delta_S+\chi_1)\int_0^1|w|^3\,dy\\
&+C\lambda\int_0^1|r_n|\left(w^2+|w|^3\right)\,dy .
\end{aligned}
\end{equation}
Indeed, only $s_n$ and $\sigma-\sigma_m$ are used in the shock-only error terms;
the contribution of $r_n$ is displayed explicitly in the last line.

Using \eqref{Pi-expansion-L32} in the definition of $\mathbf B_2$ gives
\begin{equation}\label{B2}
\begin{aligned}
\mathbf B_2(t)\le{}&
-\lambda\frac{q_mn_m}{2}\int_0^1w^2\,dy
+\lambda\frac{q_mn_m}{6}\int_0^1w^3\,dy\\
&+C\lambda\delta_S\int_0^1w^2\,dy
+C\lambda(\delta_S+\chi_1)\int_0^1|w|^3\,dy\\
&+C\lambda\int_0^1
\left(|r_n|+|r_q|\right)
\left(w^2+|w|^3\right)\,dy .
\end{aligned}
\end{equation}
For $\mathbf B_3$, using \eqref{N''}, the positivity of $\tilde n$, and
\eqref{Pi-expansion-L32}, we obtain
\begin{equation}\label{B3}
\mathbf B_3(t)
\le
C\delta_S^2\int_0^1w^2\,dy
+C\delta_S^2\int_0^1|w|^3\,dy.
\end{equation}
Likewise,
\begin{equation}\label{4.G1}
\begin{aligned}
\mathbf G_1(t)\ge{}&
\frac{\lambda\sigma_mn_m}{2}\int_0^1w^2\,dy
-\frac{\lambda\sigma_mn_m}{6}\int_0^1w^3\,dy\\
&-C\lambda\delta_S\int_0^1w^2\,dy
-C\lambda(\delta_S+\chi_1)\int_0^1|w|^3\,dy\\
&-C\lambda\int_0^1
|r_n|\left(w^2+|w|^3\right)\,dy .
\end{aligned}
\end{equation}

Combining \eqref{B1}--\eqref{4.G1}, we obtain
\begin{align*}
&\mathbf B_1(t)+\mathbf B_2(t)+\mathbf B_3(t)-\mathbf G_1(t)\\
\le{}&
\frac{\lambda n_m}{2\sigma_m}
\left(n_m-q_m\sigma_m-\sigma_m^2\right)
\int_0^1w^2\,dy
+\frac{\delta_Sn_m}{\sigma_m}\int_0^1w^2\,dy\\
&+\frac{\lambda n_m}{2\sigma_m}
\left(n_m+\frac13q_m\sigma_m+\frac13\sigma_m^2\right)
\int_0^1w^3\,dy\\
&+C\delta_S(\lambda+\delta_S+\chi_1)\int_0^1w^2\,dy
+C\lambda(\delta_S+\chi_1)\int_0^1|w|^3\,dy\\
&+C\lambda\int_0^1
\left(|r_n|+|r_q|\right)
\left(w^2+|w|^3\right)\,dy .
\end{align*}
It follows directly from \eqref{-sigma} that
\begin{equation}\label{sigma-^2}
\sigma_m^2+q_m\sigma_m-n_m=0.
\end{equation}
Hence
$n_m-q_m\sigma_m-\sigma_m^2=0$ and
$n_m+\frac13q_m\sigma_m+\frac13\sigma_m^2=\frac43n_m$.
Therefore,
\begin{equation}\label{B1+B2+B3-G1}
\begin{aligned}
&\mathbf B_1(t)+\mathbf B_2(t)+\mathbf B_3(t)-\mathbf G_1(t)\\
\le{}&
\delta_S\frac{n_m}{\sigma_m}\int_0^1w^2\,dy
+\frac{2\lambda n_m^2}{3\sigma_m}\int_0^1w^3\,dy\\
&+C\delta_S(\lambda+\delta_S+\chi_1)\int_0^1w^2\,dy
+C\lambda(\delta_S+\chi_1)\int_0^1|w|^3\,dy\\
&+C\lambda\int_0^1
\left(|r_n|+|r_q|\right)
\left(w^2+|w|^3\right)\,dy .
\end{aligned}
\end{equation}

\emph{Step 4}. We estimate $\mathbf D(t)$. Since
$(\tilde n^R)_\xi<0$ and $\tilde n^R\to n_m$ as $\xi\to+\infty$,
we have $\tilde n^R\ge n_m$. Thus
$\tilde n=\tilde n^R+(\tilde n^S)^{-\mathbf X}-n_m
\ge(\tilde n^S)^{-\mathbf X}\ge n_+$.
Consequently, using the a priori bound on $w$,
\[
\inf_y\left(a^{-\mathbf X}\frac{\tilde n^2}{n}\right)
\ge n_+(1-C\chi_1)
\ge n_m-C(\delta_S+\chi_1).
\]
Moreover, by \eqref{y} and \eqref{nS'},
\[
y(1-y)
=-\frac{\sigma(\tilde n^S)^{-\mathbf X}_\xi}{\delta_S^2}
=\frac{\sigma}{\delta_S\lambda}a^{-\mathbf X}_\xi,
\]
and hence
\begin{equation}\label{dxi}
\frac{dy}{d\xi}
=\frac{a^{-\mathbf X}_\xi}{\lambda}
=\frac{\delta_S}{\sigma}y(1-y).
\end{equation}
Therefore,
\begin{equation}\label{4.D}
\begin{aligned}
\mathbf D(t)
&=\int_0^1
a^{-\mathbf X}\frac{\delta_S}{\sigma}
\frac{\tilde n^2}{n}
y(1-y)|\partial_yw|^2\,dy\\
&\ge
\delta_S\frac{n_m}{\sigma_m}
\left(1-C(\delta_S+\chi_1)\right)
\int_0^1y(1-y)|\partial_yw|^2\,dy .
\end{aligned}
\end{equation}

\emph{Step 5}. Combining \eqref{B1+B2+B3-G1} and \eqref{4.D},
and setting $\overline w:=\int_0^1w\,dy$, the 
Poincar\'e inequality in Lemma \ref{Poincare} yields
\begin{align}\label{B1+B2+B3-G1-D}
&\mathbf B_1(t)+\mathbf B_2(t)+\mathbf B_3(t)-\mathbf G_1(t)
-\frac34\mathbf D(t)\nonumber\\
\le{}&
\delta_S\left(
-\frac{n_m}{2\sigma_m}
+C(\lambda+\delta_S+\chi_1)
\right)\int_0^1w^2\,dy
+\frac{3\delta_Sn_m}{2\sigma_m}\overline w^{\,2}\nonumber\\
&+C\lambda\int_0^1|w|^3\,dy
+C\lambda\int_0^1
\left(|r_n|+|r_q|\right)
\left(w^2+|w|^3\right)\,dy .
\end{align}
Taking
\[
M=\frac{6\sigma_m^3n_m}{(\sigma_m^2+n_m)^2},
\]
the mean term in \eqref{B1+B2+B3-G1-D} is cancelled by the leading
negative term in \eqref{4.X^2}. Since $\lambda=\sqrt{\delta_S}$,
choosing $\delta_S$ and $\chi_1$ suitably small gives
\begin{equation}\label{L32-with-rw-remainder-y}
\begin{aligned}
&-\frac{\delta_S}{2M}|\dot{\mathbf X}(t)|^2
+\mathbf B_1(t)+\mathbf B_2(t)+\mathbf B_3(t)
-\mathbf G_1(t)-\frac34\mathbf D(t)\\
&\le
-C\delta_S\int_0^1w^2\,dy
+C\lambda\int_0^1|w|^3\,dy\\
&\quad
+C\delta_S
\left(\int_0^1|r_n|\,|w|\,dy\right)^2
+C\lambda\int_0^1
\left(|r_n|+|r_q|\right)
\left(w^2+|w|^3\right)\,dy .
\end{aligned}
\end{equation}

Finally, changing $y$ back to $\xi$ by \eqref{dy}, we obtain
\begin{equation}\label{L32-with-rw-remainder-xi}
\begin{aligned}
&-\frac{\delta_S}{2M}|\dot{\mathbf X}(t)|^2
+\mathbf B_1(t)+\mathbf B_2(t)+\mathbf B_3(t)
-\mathbf G_1(t)-\frac34\mathbf D(t)\\
&\le
-C\int_{\mathbb R}|(\tilde n^S)^{-\mathbf X}_\xi|
\left|\frac{n-\tilde n}{\tilde n}\right|^2\,d\xi
+C\int_{\mathbb R}a^{-\mathbf X}_\xi
\left|\frac{n-\tilde n}{\tilde n}\right|^3\,d\xi\\
&\quad
+\mathcal R_{\mathrm{rw},1}(t)+\mathcal R_{\mathrm{rw},2}(t),
\end{aligned}
\end{equation}
where $\mathcal R_{\mathrm{rw},1}$ and
$\mathcal R_{\mathrm{rw},2}$ are defined in \eqref{rw-remainders-L32}.
Thus the shock contribution has the desired coercive structure, while the
rarefaction-wave contributions are kept explicitly for the subsequent
interaction estimates.
\end{proof}					
	
In the subsequent energy estimates, the estimation of the interaction terms between the waves will be of critical importance. 
By virtue of Lemmas  \ref{properties of nRqR} and  \ref{properties of the 2-viscous shock wave}, in combination with  \eqref{4.supF}, we establish the following lemma.
\begin{lemma}[The wave interaction estimates]\label{Interaction estimates}
Let $\mathbf X$ be the shift defined by \eqref{X}. Under the same hypotheses
as in Proposition \ref{4 A priori estimates}, there exist constants
$C,c>0$, independent of $\delta_R,\delta_S$ and $t$, such that for every
$t\le T$,
\begin{equation}\label{interaction estimates}
\begin{aligned}
&\left\|(\tilde n^S)^{-\mathbf X}_\xi
(\tilde n^R-n_m)\right\|_{L^1}
+\left\|(\tilde n^R)_\xi
(\tilde n^S)^{-\mathbf X}_\xi\right\|_{L^1}
\le C\delta_R\delta_Se^{-c\delta_St},\\
&\left\|(\tilde n^S)^{-\mathbf X}_\xi
(\tilde n^R-n_m)\right\|_{L^2}
+\left\|(\tilde n^R)_\xi
(\tilde n^S)^{-\mathbf X}_\xi\right\|_{L^2}
\le C\delta_R\delta_S^{3/2}e^{-c\delta_St},\\
&\left\|(\tilde n^R)_\xi
\big((\tilde n^S)^{-\mathbf X}-n_m\big)\right\|_{L^2}
\le C\delta_R\delta_Se^{-c\delta_St}.
\end{aligned}
\end{equation}
The corresponding $q$-component estimates also hold:
\begin{equation}\label{interaction estimates q}
\begin{aligned}
&\left\|(\tilde q^S)^{-\mathbf X}_\xi
(\tilde q^R-q_m)\right\|_{L^1}
+\left\|(\tilde q^R)_\xi
(\tilde q^S)^{-\mathbf X}_\xi\right\|_{L^1}
\le C\delta_R\delta_Se^{-c\delta_St},\\
&\left\|(\tilde q^S)^{-\mathbf X}_\xi
(\tilde q^R-q_m)\right\|_{L^2}
+\left\|(\tilde q^R)_\xi
(\tilde q^S)^{-\mathbf X}_\xi\right\|_{L^2}
\le C\delta_R\delta_S^{3/2}e^{-c\delta_St},\\
&\left\|(\tilde q^R)_\xi
\big((\tilde q^S)^{-\mathbf X}-q_m\big)\right\|_{L^2}
\le C\delta_R\delta_Se^{-c\delta_St}.
\end{aligned}
\end{equation}
In particular, the mixed-component interactions used later satisfy
\begin{equation}\label{interaction estimates mixed}
\begin{aligned}
&\left\|(\tilde n^R)_\xi
\big((\tilde q^S)^{-\mathbf X}-q_m\big)\right\|_{L^2}
+\left\|(\tilde q^R)_\xi
\big((\tilde n^S)^{-\mathbf X}-n_m\big)\right\|_{L^2}\\
&\quad
+\left\|(\tilde q^S)^{-\mathbf X}_\xi
(\tilde q^R-q_m)\right\|_{L^2}
+\left\|(\tilde n^S)^{-\mathbf X}_\xi
(\tilde n^R-n_m)\right\|_{L^2}\\
&\le C\delta_R\delta_Se^{-c\delta_St}.
\end{aligned}
\end{equation}
All rarefaction profiles in these estimates are evaluated at
$(\xi+\sigma t,t)$.
\end{lemma}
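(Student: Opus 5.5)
The proof rests on splitting the real line at a moving point between the two waves and using the exponential tails on each side. In the shock frame, the rarefaction profile is evaluated at $x=\xi+\sigma t$, so it is concentrated near $\xi\approx \lambda t-\sigma t$ with $\lambda\in[\lambda_-,\lambda_m]$, that is, to the left of $\xi=-\sigma t$. The shifted shock is centered at $\xi=\mathbf X(t)$. First I control the shift. By \eqref{4.supF}, the Sobolev embedding, and the a priori bound \eqref{4.assumption}, we have $|\dot{\mathbf X}|\le C\chi_1$. Taking $\chi_1\le\sigma_m/8$, and using $\sigma\ge\sigma_m/2$ from \eqref{-sigma/2}, gives $|\mathbf X(t)|\le \sigma t/4$. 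I split $\mathbb R$ at $\xi_0(t):=-\sigma t/2$. On $\{\xi\ge\xi_0\}$ we have $x=\xi+\sigma t\ge \sigma t/2\ge0$. There Lemma \ref{properties of nRqR}(4) gives
\[
|\tilde n^R-n_m|+|\tilde q^R-q_m|+|(\tilde n^R_x,\tilde q^R_x)|\le C\delta_R e^{-c(\xi+\sigma t+|\lambda_m|t)}\le C\delta_R e^{-c\sigma t/2}.
\]
On $\{\xi\le\xi_0\}$ we have $\xi-\mathbf X(t)\le -\sigma t/4<0$. There \eqref{N'} and \eqref{N''} give
\[
|(\tilde n^S)^{-\mathbf X}_\xi|\le C\delta_S^2e^{-c\delta_S|\xi-\mathbf X|},\qquad |(\tilde n^S)^{-\mathbf X}-n_m|\le C\delta_Se^{-c\delta_S|\xi-\mathbf X|},
\]
and $|\xi-\mathbf X|\ge \sigma t/4+(\xi_0-\xi)$.

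For the $L^1$ bounds I treat each side separately. On the right piece I bound the rarefaction factor in $L^\infty$ by $C\delta_R e^{-ct}$ and the shock derivative in $L^1$ by $C\delta_S$. On the left piece I bound the rarefaction factor in $L^\infty$ by $C\delta_R$; this uses $|\tilde n^R-n_m|\le C\delta_R$ and $\|\tilde n^R_x\|_{L^\infty}\le C\delta_R$ from \eqref{nRx}. The shock derivative is then integrated over $\{\xi\le\xi_0\}$, which gives
\[
\int_{-\infty}^{\xi_0}C\delta_S^2e^{-c\delta_S(\sigma t/4+\xi_0-\xi)}d\xi\le C\delta_Se^{-c\delta_S t}.
\]
Since $e^{-ct}\le e^{-c\delta_St}$, summing the two pieces gives the bound $C\delta_R\delta_S e^{-c\delta_St}$.

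The $L^2$ bounds follow the same pattern. For the product with a shock derivative I use $\|\tilde n^S_\xi\|_{L^2}\le C\delta_S^{3/2}$ on the right piece. On the left piece I integrate the squared tail, which gives $(\delta_S^4/\delta_S)^{1/2}e^{-c\delta_St}=\delta_S^{3/2}e^{-c\delta_St}$. For the third line of \eqref{interaction estimates}, with the factor $(\tilde n^S)^{-\mathbf X}-n_m$, the order of the factors is swapped. On the left piece I put the shock factor in $L^\infty$, bounded by $C\delta_Se^{-c\delta_St}$, and the rarefaction derivative in $L^2$, bounded by $C\delta_R$ via \eqref{nRx}. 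On the right piece I bound $|(\tilde n^S)^{-\mathbf X}-n_m|\le\delta_S$ and use the $L^2$ norm of the pointwise tail $C\delta_Re^{-c(\xi+\sigma t)}$ over $\xi\ge\xi_0$, which is $\le C\delta_Re^{-ct}$. The $q$-component estimates and the mixed estimates \eqref{interaction estimates mixed} are handled identically, using \eqref{shock-q-relation-prelim} ($\tilde q^S_\xi=-\tilde n^S_\xi/\sigma$, $\tilde q^S-q_m=-(\tilde n^S-n_m)/\sigma$) and the same tail bounds for $\tilde q^R$. For the mixed terms the cruder rate $\delta_R\delta_S$ is enough, since $\delta_S^{3/2}\le\delta_S$.

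The main obstacle is controlling the shift, so that the shock stays a distance comparable to $t$ to the right of the rarefaction region uniformly in time. This needs $|\mathbf X(t)|\le \sigma t/4$, not merely the bound $|\mathbf X|\le Ct$ of \eqref{Xt}. I obtain it from \eqref{4.supF} together with the a priori smallness $\chi_1$ and the embedding $\|\cdot\|_{L^\infty}\le C\|\cdot\|_{H^1}$. Everything else consists of routine exponential integrals.
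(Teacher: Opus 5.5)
Your proposal is correct and follows essentially the same route as the paper. You bound $|\dot{\mathbf X}|\le C\chi_1$ to get $|\mathbf X(t)|\le \sigma t/4$, split at $\xi=-\sigma t/2$, use the right rarefaction tail from Lemma~\ref{properties of nRqR}(4) on one piece and the left shock tail on the other, and pass to the $q$-components via $\tilde q^S_\xi=-\tilde n^S_\xi/\sigma$ and $|\tilde q^R-q_m|\le C|\tilde n^R-n_m|$. The differences are cosmetic: for the $(\tilde n^R)_\xi(\tilde n^S)^{-\mathbf X}_\xi$ terms on the left piece you put the rarefaction derivative in $L^\infty$ and integrate the shock tail, whereas the paper does the reverse; and your smallness condition should read $C\chi_1\le\sigma_m/8$ rather than $\chi_1\le\sigma_m/8$.
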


\begin{proof}
For simplicity, set
\[
R(\xi,t):=\tilde n^R(\xi+\sigma t,t),\qquad
S(\xi,t):=(\tilde n^S)^{-\mathbf X}(\xi)
=\tilde n^S(\xi-\mathbf X(t)).
\]
Thus the three interaction quantities in \eqref{interaction estimates} are
$S_\xi(R-n_m)$, $R_\xi S_\xi$, and
$R_\xi(S-n_m)$.

We first record the separation of the two component waves. By the definition
\eqref{X}, the a priori assumption in Proposition \ref{4 A priori estimates}, the bound
$|\varphi(n)|\le C|n-\tilde n|$, and
$\int_{\mathbb R}a^{-\mathbf X}_\xi\,d\xi=\lambda$, we have
\[
\begin{aligned}
|\dot{\mathbf X}(t)|
&\le \frac{CM}{\lambda}
\int_{\mathbb R}a^{-\mathbf X}_\xi
\left(
\left|\frac{n-\tilde n}{\tilde n}\right|
+|\varphi(n)|
\right)d\xi\\
&\le C\chi_1 .
\end{aligned}
\]
Since $\mathbf X(0)=0$, it follows that
\begin{equation}\label{interaction-X-small}
|\mathbf X(t)|\le C\chi_1 t.
\end{equation}
Taking $\chi_1$ sufficiently small, and recalling from Lemma
\ref{properties of the 2-viscous shock wave} that
$\sigma\ge \sigma_m/2>0$, we may assume
\begin{equation}\label{interaction-X-separation}
|\mathbf X(t)|\le \frac{\sigma}{4}t,\qquad 0\le t\le T.
\end{equation}

We divide the real line into
\[
\Omega_-(t):=\left\{\xi\le-\frac{\sigma t}{2}\right\},
\qquad
\Omega_+(t):=\left\{\xi\ge-\frac{\sigma t}{2}\right\}.
\]
If $\xi\in\Omega_+(t)$, then
$\xi+\sigma t\ge \sigma t/2\ge0$. Hence Lemma
\ref{properties of nRqR}-(4) gives, for some $c>0$ independent of
$\delta_R,\delta_S$ and $t$,
\begin{equation}\label{interaction-R-tail}
\|R-n_m\|_{L^\infty(\Omega_+)}
+\|R_\xi\|_{L^\infty(\Omega_+)}
+\|R_\xi\|_{L^2(\Omega_+)}
\le C\delta_R e^{-ct}.
\end{equation}
Here the $L^2$ estimate follows directly by integrating the pointwise
exponential bound in Lemma \ref{properties of nRqR}-(4).
Moreover, by the monotonicity of the rarefaction profile and Lemma
\ref{properties of nRqR}-(2),
\begin{equation}\label{interaction-R-global}
\|R-n_m\|_{L^\infty(\mathbb R)}
+\|R_\xi\|_{L^1(\mathbb R)}
+\|R_\xi\|_{L^2(\mathbb R)}
\le C\delta_R.
\end{equation}

On the other hand, if $\xi\in\Omega_-(t)$, then by
\eqref{interaction-X-separation},
\[
\xi-\mathbf X(t)
\le -\frac{\sigma t}{2}+|\mathbf X(t)|
\le -\frac{\sigma t}{4}.
\]
Therefore Lemma \ref{properties of the 2-viscous shock wave}, in particular
\eqref{N'} and the left-tail estimate therein, yields
\begin{equation}\label{interaction-S-tail}
\begin{aligned}
\|S_\xi\|_{L^1(\Omega_-)}
&\le C\delta_S e^{-c\delta_St},\\
\|S_\xi\|_{L^2(\Omega_-)}
&\le C\delta_S^{3/2}e^{-c\delta_St},\\
\|S_\xi\|_{L^\infty(\Omega_-)}
&\le C\delta_S^2e^{-c\delta_St},\\
\|S-n_m\|_{L^\infty(\Omega_-)}
&\le C\delta_S e^{-c\delta_St}.
\end{aligned}
\end{equation}
Since the shift does not change spatial norms and the shock profile is
monotone, we also have
\begin{equation}\label{interaction-S-global}
\|S_\xi\|_{L^1(\mathbb R)}=\delta_S,\qquad
\|S_\xi\|_{L^2(\mathbb R)}\le C\delta_S^{3/2},\qquad
\|S-n_m\|_{L^\infty(\mathbb R)}\le C\delta_S.
\end{equation}

We now prove the first estimate in \eqref{interaction estimates}. By
\eqref{interaction-R-global} and \eqref{interaction-S-tail},
\[
\begin{aligned}
\|S_\xi(R-n_m)\|_{L^1(\Omega_-)}
&\le
\|R-n_m\|_{L^\infty(\mathbb R)}
\|S_\xi\|_{L^1(\Omega_-)}\\
&\le C\delta_R\delta_Se^{-c\delta_St}.
\end{aligned}
\]
On $\Omega_+(t)$, \eqref{interaction-R-tail} and
\eqref{interaction-S-global} give
\[
\begin{aligned}
\|S_\xi(R-n_m)\|_{L^1(\Omega_+)}
&\le
\|R-n_m\|_{L^\infty(\Omega_+)}
\|S_\xi\|_{L^1(\mathbb R)}\\
&\le C\delta_R\delta_Se^{-ct}
\le C\delta_R\delta_Se^{-c\delta_St},
\end{aligned}
\]
where we used $\delta_S\le1$ in the last inequality. Consequently,
\begin{equation}\label{interaction-L1-first}
\|S_\xi(R-n_m)\|_{L^1(\mathbb R)}
\le C\delta_R\delta_Se^{-c\delta_St}.
\end{equation}
Likewise, on $\Omega_-(t)$,
\[
\begin{aligned}
\|R_\xi S_\xi\|_{L^1(\Omega_-)}
&\le
\|S_\xi\|_{L^\infty(\Omega_-)}
\|R_\xi\|_{L^1(\mathbb R)}\\
&\le
C\delta_R\delta_S^2e^{-c\delta_St}
\le C\delta_R\delta_Se^{-c\delta_St},
\end{aligned}
\]
while on $\Omega_+(t)$,
\[
\begin{aligned}
\|R_\xi S_\xi\|_{L^1(\Omega_+)}
&\le
\|R_\xi\|_{L^\infty(\Omega_+)}
\|S_\xi\|_{L^1(\mathbb R)}\\
&\le C\delta_R\delta_Se^{-ct}
\le C\delta_R\delta_Se^{-c\delta_St}.
\end{aligned}
\]
Together with \eqref{interaction-L1-first}, this proves the first line of
\eqref{interaction estimates}.

For the $L^2$ estimates, using
\eqref{interaction-R-global}--\eqref{interaction-S-global}, we obtain
\[
\begin{aligned}
\|S_\xi(R-n_m)\|_{L^2(\Omega_-)}
&\le
\|R-n_m\|_{L^\infty(\mathbb R)}
\|S_\xi\|_{L^2(\Omega_-)}
\le C\delta_R\delta_S^{3/2}e^{-c\delta_St},\\
\|S_\xi(R-n_m)\|_{L^2(\Omega_+)}
&\le
\|R-n_m\|_{L^\infty(\Omega_+)}
\|S_\xi\|_{L^2(\mathbb R)}
\le C\delta_R\delta_S^{3/2}e^{-c\delta_St}.
\end{aligned}
\]
Thus
\begin{equation}\label{interaction-L2-first}
\|S_\xi(R-n_m)\|_{L^2(\mathbb R)}
\le C\delta_R\delta_S^{3/2}e^{-c\delta_St}.
\end{equation}
Similarly,
\[
\begin{aligned}
\|R_\xi S_\xi\|_{L^2(\Omega_-)}
&\le
\|S_\xi\|_{L^\infty(\Omega_-)}
\|R_\xi\|_{L^2(\mathbb R)}
\le C\delta_R\delta_S^2e^{-c\delta_St}\\
&\le C\delta_R\delta_S^{3/2}e^{-c\delta_St},\\
\|R_\xi S_\xi\|_{L^2(\Omega_+)}
&\le
\|R_\xi\|_{L^\infty(\Omega_+)}
\|S_\xi\|_{L^2(\mathbb R)}
\le C\delta_R\delta_S^{3/2}e^{-c\delta_St}.
\end{aligned}
\]
Combining this with \eqref{interaction-L2-first} proves the second line of
\eqref{interaction estimates}.

Finally, for the last interaction term, we use the exponentially small shock
tail on $\Omega_-$ and the exponentially small rarefaction tail on
$\Omega_+$. Namely,
\[
\begin{aligned}
\|R_\xi(S-n_m)\|_{L^2(\Omega_-)}
&\le
\|S-n_m\|_{L^\infty(\Omega_-)}
\|R_\xi\|_{L^2(\mathbb R)}
\le C\delta_R\delta_Se^{-c\delta_St},\\
\|R_\xi(S-n_m)\|_{L^2(\Omega_+)}
&\le
\|S-n_m\|_{L^\infty(\mathbb R)}
\|R_\xi\|_{L^2(\Omega_+)}
\le C\delta_R\delta_Se^{-ct}\\
&\le C\delta_R\delta_Se^{-c\delta_St}.
\end{aligned}
\]
This gives the third line of \eqref{interaction estimates}.

It remains to record the $q$-component consequences. From the second
traveling-wave equation in \eqref{traveling wave},
\begin{equation}\label{shock-q-n-relation-interaction}
(\tilde q^S)^{-\mathbf X}_\xi
=-\frac1{\sigma}(\tilde n^S)^{-\mathbf X}_\xi,
\qquad
(\tilde q^S)^{-\mathbf X}-q_m
=-\frac1{\sigma}
\big((\tilde n^S)^{-\mathbf X}-n_m\big).
\end{equation}
On the other hand, along the $1$-rarefaction curve,
\[
\binom{(\tilde n^R)_\xi}{(\tilde q^R)_\xi}
=\kappa(\xi,t)\,
r_1(\tilde n^R,\tilde q^R)
=\kappa(\xi,t)
\binom{-\lambda_1(\tilde n^R,\tilde q^R)}{1}.
\]
Since the rarefaction profile remains in a fixed compact subset of
$\{n>0\}$, $\lambda_1$ is bounded away from zero there. Consequently,
\begin{equation}\label{rare-q-n-comparable-interaction}
|(\tilde q^R)_\xi|
\le C|(\tilde n^R)_\xi|,
\qquad
|\tilde q^R-q_m|
\le C|\tilde n^R-n_m|.
\end{equation}
The second inequality follows by integrating
$dq/dn=-1/\lambda_1(n,q)$ along the rarefaction curve.

Combining
\eqref{shock-q-n-relation-interaction}--\eqref{rare-q-n-comparable-interaction}
with \eqref{interaction estimates} gives
\eqref{interaction estimates q}. The same relations, together with
$\delta_S^{3/2}\le\delta_S$ for $0<\delta_S\le1$, also give
\eqref{interaction estimates mixed}. This completes the proof.
\end{proof}

\begin{lemma}[Rarefaction modulation in the weighted relative entropy]
\label{rarefaction-modulation-L34}
Under the hypotheses of Proposition \ref{4 A priori estimates}, set
\[
u:=n-\tilde n,\qquad v:=q-\tilde q,\qquad
A:=a^{-\mathbf X},
\]
and
\[
N(\xi,t):=\tilde n^R(\xi+\sigma t,t),\qquad
Q(\xi,t):=\tilde q^R(\xi+\sigma t,t),
\]
\[
S(\xi,t):=(\tilde n^S)^{-\mathbf X}(\xi),\qquad
T(\xi,t):=(\tilde q^S)^{-\mathbf X}(\xi).
\]
Let
\[
\Lambda(\xi,t):=\lambda_1(N,Q),\qquad
b(\xi,t):=\frac{N(\xi,t)}{n_m}.
\]
Then
\begin{equation}\label{rare-rel-L34-int}
N_\xi=-\Lambda Q_\xi,\qquad
N_t-\sigma N_\xi=\Lambda^2Q_\xi,
\end{equation}
and, for $\delta_0$ sufficiently small,
\begin{equation}\label{lambda-bounds-L34-int}
0<c_\Lambda\le-\Lambda(\xi,t)\le C_\Lambda.
\end{equation}
In particular,
\begin{equation}\label{Nxi-infty-L34}
\|N_\xi(t)\|_{L^\infty}\le C\delta_R.
\end{equation}

Define the correction generated by the derivatives of $b$ by
\begin{equation}\label{Qb-L34}
\begin{aligned}
\mathcal Q_b(t)
={}&
\int_{\mathbb R}A b_t
\left(\Pi(n|\tilde n)+\frac{v^2}{2}\right)d\xi\\
&-\int_{\mathbb R}A b_\xi\tilde q\,\Pi(n|\tilde n)\,d\xi
-\sigma\int_{\mathbb R}A b_\xi\Pi(n|\tilde n)\,d\xi\\
&-\int_{\mathbb R}A b_\xi
\big(\Pi(n|\tilde n)+u\big)v\,d\xi
-\frac{\sigma}{2}\int_{\mathbb R}A b_\xi v^2\,d\xi\\
&-\int_{\mathbb R}A b_\xi
n\ln\frac{n}{\tilde n}\,
\partial_\xi\left(\ln\frac{n}{\tilde n}\right)d\xi .
\end{aligned}
\end{equation}
Moreover, with
\[
\mathbf B_9^{(b)}
:=
\int_{\mathbb R}
A b\,\frac{N_\xi}{\tilde n}\,uv\,d\xi,
\]
one has the exact identity
\begin{equation}\label{B9-Qb-L34}
\mathbf B_9^{(b)}+\mathcal Q_b
=
-\mathcal G_R
+\sum_{j=1}^{4}\mathcal R_{R,j},
\end{equation}
where
\begin{equation}\label{GR-def-L34-int}
\mathcal G_R(t):=
\frac1{n_m}\int_{\mathbb R}A
\left[
N|Q_\xi|\Pi(n|\tilde n)
+\frac{|N_t-\sigma N_\xi|}{2}v^2
\right]d\xi
\end{equation}
and
\begin{align}
\mathcal R_{R,1}
&:=
\frac1{n_m}\int_{\mathbb R}A
(Q-\tilde q)N_\xi\Pi(n|\tilde n)\,d\xi,
\label{RR1-L34}\\
\mathcal R_{R,2}
&:=
-\frac1{n_m}\int_{\mathbb R}A
N_\xi\Pi(n|\tilde n)v\,d\xi,
\label{RR2-L34}\\
\mathcal R_{R,3}
&:=
\frac1{n_m}\int_{\mathbb R}A
N_\xi\left(\frac{N}{\tilde n}-1\right)uv\,d\xi,
\label{RR3-L34}\\
\mathcal R_{R,4}
&:=
-\frac1{n_m}\int_{\mathbb R}A
N_\xi n\ln\frac{n}{\tilde n}\,
\partial_\xi\left(\ln\frac{n}{\tilde n}\right)d\xi.
\label{RR4-L34}
\end{align}
Furthermore,
\begin{equation}\label{GR-weighted-control-L34}
\int_{\mathbb R}A|N_\xi|(u^2+v^2)\,d\xi
\le C\mathcal G_R(t),
\end{equation}
and, for every $\varepsilon>0$,
\begin{equation}\label{RR24-absorb-L34}
|\mathcal R_{R,2}(t)|+|\mathcal R_{R,4}(t)|
\le
\varepsilon\mathbf D(t)
+C\left(\chi_1+C_\varepsilon\delta_R\right)\mathcal G_R(t).
\end{equation}
The remaining two terms satisfy
\begin{align}
|\mathcal R_{R,1}(t)|
&\le
C\chi_1^2\delta_R\delta_Se^{-c\delta_St},
\label{RR1-est-L34}\\
|\mathcal R_{R,3}(t)|
&\le
C\chi_1^2\delta_R\delta_Se^{-c\delta_St},
\label{RR3-est-L34}
\end{align}
and hence
\begin{equation}\label{RR13-time-L34}
\int_0^t
\left(
|\mathcal R_{R,1}(\tau)|
+|\mathcal R_{R,3}(\tau)|
\right)d\tau
\le C\chi_1^2\delta_R.
\end{equation}
\end{lemma}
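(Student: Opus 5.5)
The plan is to derive every assertion from the rarefaction relations \eqref{rare-exact-prelim}, the characteristic identity for $\lambda_1$, and the interaction bounds of Lemma \ref{Interaction estimates}. The exact identity \eqref{B9-Qb-L34} should come from pure algebra, with no estimates.

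\emph{Step 1: the relations \eqref{rare-rel-L34-int}--\eqref{Nxi-infty-L34}.} Since $N(\xi,t)=\tilde n^R(\xi+\sigma t,t)$, we have $N_\xi=\tilde n^R_x$ and $N_t-\sigma N_\xi=\tilde n^R_t$. Hence \eqref{rare-exact-prelim} gives $N_\xi=-\Lambda Q_\xi$ and $N_t-\sigma N_\xi=\Lambda^2Q_\xi$. The values of $\Lambda$ lie in $[\lambda_-,\lambda_m]$. Both endpoints are negative and, for $\delta_0$ small, close to $\lambda_1(n_+,q_+)$ up to $O(\delta_0)$. This gives \eqref{lambda-bounds-L34-int}. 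Then \eqref{Nxi-infty-L34} follows from \eqref{nRx} with $p=\infty$. We record two sign facts: $Q_\xi<0$, so $N_t-\sigma N_\xi=-\Lambda^2|Q_\xi|$; and $N\ge n_m$, since $N_\xi<0$ and $N\to n_m$ as $\xi\to+\infty$.

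\emph{Step 2: the exact identity.} Insert $b_t=N_t/n_m$ and $b_\xi=N_\xi/n_m$ into \eqref{Qb-L34}.
\begin{itemize}
\item The $b_t$-term, the two $\sigma$-terms and the $\sigma v^2$-term combine into
\[
\frac{1}{n_m}\int A(N_t-\sigma N_\xi)\Big(\Pi+\frac{v^2}{2}\Big)\,d\xi
=\frac{1}{n_m}\int A\Lambda^2Q_\xi\Big(\Pi+\frac{v^2}{2}\Big)\,d\xi .
\]
Its $v^2$ part is exactly $-\frac{1}{n_m}\int A\frac{|N_t-\sigma N_\xi|}{2}v^2\,d\xi$.
\item The term $-\int Ab_\xi\tilde q\,\Pi\,d\xi$ equals $\frac1{n_m}\int A\Lambda Q_\xi\tilde q\,\Pi\,d\xi$. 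Adding it to the $\Pi$ part of the previous item gives $\frac1{n_m}\int AQ_\xi(\Lambda^2+\Lambda\tilde q)\Pi\,d\xi$.
\item Since $\lambda_1$ is a root of $\lambda^2+q\lambda-n=0$, we have $\Lambda^2+\Lambda Q=N$. Therefore $\Lambda^2+\Lambda\tilde q=N+\Lambda(\tilde q-Q)$. The $N$ piece gives $-\frac1{n_m}\int AN|Q_\xi|\Pi\,d\xi$. The remaining piece is $\Lambda Q_\xi(\tilde q-Q)=N_\xi(Q-\tilde q)$, which is $\mathcal R_{R,1}$.
\item The term $-\int Ab_\xi(\Pi+u)v\,d\xi$ splits into $\mathcal R_{R,2}$ and $-\frac1{n_m}\int AN_\xi uv\,d\xi$. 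The latter combines with $\mathbf B_9^{(b)}=\frac1{n_m}\int A\frac{N}{\tilde n}N_\xi uv\,d\xi$ to give $\mathcal R_{R,3}$.
\item The last term of \eqref{Qb-L34} is $\mathcal R_{R,4}$ verbatim.
\end{itemize}

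\emph{Step 3: the estimates.} For \eqref{GR-weighted-control-L34} we use:
\begin{itemize}
\item $|N_\xi|=|\Lambda||Q_\xi|\le C_\Lambda|Q_\xi|$;
\item $N\ge n_m$;
\item $\Pi\ge C_1u^2$ from \eqref{4.phi n-ntilde};
\item $|N_t-\sigma N_\xi|=\Lambda^2|Q_\xi|\ge c_\Lambda^2|Q_\xi|\ge c|N_\xi|$.
\end{itemize}
For $\mathcal R_{R,2}$, we bound $|\Pi v|\le C|u|\,|v|\,\|u\|_{L^\infty}\le C\chi_1(u^2+v^2)$ by Sobolev and \eqref{4.assumption}, and then apply \eqref{GR-weighted-control-L34}. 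For $\mathcal R_{R,4}$, we have $|n\ln(n/\tilde n)|\le C|u|$. Young's inequality then bounds the integrand by $\varepsilon An|\partial_\xi\ln(n/\tilde n)|^2+C_\varepsilon A|N_\xi|^2u^2$. Using \eqref{Nxi-infty-L34}, the second term is at most $C_\varepsilon\delta_R A|N_\xi|u^2\le C_\varepsilon\delta_R\,\mathcal G_R$. This proves \eqref{RR24-absorb-L34}.

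For $\mathcal R_{R,1}$, observe that $Q-\tilde q=-(T-q_m)=\frac1\sigma(S-n_m)$ by \eqref{shock-q-relation-prelim}. Hence
\[
|\mathcal R_{R,1}|\le C\|N_\xi(S-n_m)\|_{L^2}\,\|u\|_{L^4}^2
\le C\|N_\xi(S-n_m)\|_{L^2}\,\|u\|_{L^\infty}\|u\|_{L^2}
\le C\chi_1^2\,\|N_\xi(S-n_m)\|_{L^2}.
\]
The third line of \eqref{interaction estimates} then gives the factor $C\delta_R\delta_Se^{-c\delta_St}$.

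For $\mathcal R_{R,3}$, note that $\frac{N}{\tilde n}-1=\frac{n_m-S}{\tilde n}$ with $\tilde n\ge n_+$. The same argument applies with $|uv|$ in place of $\Pi$, using $\|u\|_{L^4}\|v\|_{L^4}\le C\chi_1^2$. Finally, \eqref{RR13-time-L34} follows from $\int_0^\infty\delta_Se^{-c\delta_S\tau}\,d\tau=1/c$.

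I expect the main obstacle to be Step 2, namely spotting the cancellation through the characteristic equation $\Lambda^2+\Lambda Q=N$. Without it, the $\Pi$ terms have no sign. The only subtlety there is that $\tilde q$ differs from $Q$ by the shock part, which is precisely the interaction remainder $\mathcal R_{R,1}$. Everything else is Cauchy--Schwarz together with Lemma \ref{Interaction estimates}.
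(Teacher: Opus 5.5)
Your proposal is correct and follows the paper's proof essentially step for step. You use the same regrouping of $\mathcal Q_b$ via $b_t-\sigma b_\xi=(N_t-\sigma N_\xi)/n_m$, the same identification of $\mathcal R_{R,1},\dots,\mathcal R_{R,4}$, and the same Cauchy--Schwarz/Young and interaction-lemma bounds. The one cosmetic difference is how you extract $-N|Q_\xi|\Pi(n|\tilde n)$: you use $N_t-\sigma N_\xi=\Lambda^2Q_\xi$ together with the characteristic relation $\Lambda^2+\Lambda Q=N$, whereas the paper uses the conservative form $N_t-\sigma N_\xi=(NQ)_\xi=QN_\xi+NQ_\xi$, which is the same identity.
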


\begin{proof}
The identities in \eqref{rare-rel-L34-int} follow from
Lemma \ref{properties of nRqR}.  Since the rarefaction profile stays in a
fixed compact subset of $\{n>0\}$, $\Lambda$ is bounded away from zero,
which gives \eqref{lambda-bounds-L34-int}; \eqref{Nxi-infty-L34} follows
from the profile estimates in the same lemma.

We next prove \eqref{B9-Qb-L34}.  Since
\[
b_\xi=\frac{N_\xi}{n_m},\qquad
b_t-\sigma b_\xi
=
\frac{N_t-\sigma N_\xi}{n_m},
\]
the terms in \eqref{Qb-L34} can be rearranged as
\[
\begin{aligned}
\mathcal Q_b
=
\frac1{n_m}\int_{\mathbb R}A\Big[
&(N_t-\sigma N_\xi)
\left(\Pi(n|\tilde n)+\frac{v^2}{2}\right)
-N_\xi\tilde q\,\Pi(n|\tilde n)\\
&-N_\xi\Pi(n|\tilde n)v
-N_\xi uv
-N_\xi n\ln\frac n{\tilde n}
\partial_\xi\left(\ln\frac n{\tilde n}\right)
\Big]d\xi.
\end{aligned}
\]
On the other hand,
\[
\mathbf B_9^{(b)}
=
\frac1{n_m}\int_{\mathbb R}
A\frac{NN_\xi}{\tilde n}uv\,d\xi.
\]
Therefore
\[
\begin{aligned}
\mathbf B_9^{(b)}+\mathcal Q_b
=
\frac1{n_m}\int_{\mathbb R}A\Big[
&(N_t-\sigma N_\xi)\Pi
-N_\xi\tilde q\,\Pi
-N_\xi\Pi v\\
&+\frac12(N_t-\sigma N_\xi)v^2
+N_\xi\left(\frac N{\tilde n}-1\right)uv\\
&-N_\xi n\ln\frac n{\tilde n}
\partial_\xi\left(\ln\frac n{\tilde n}\right)
\Big]d\xi.
\end{aligned}
\]
Using the exact rarefaction equation
\[
N_t-\sigma N_\xi=(NQ)_\xi=QN_\xi+NQ_\xi,
\]
we have
\[
(N_t-\sigma N_\xi)\Pi
-N_\xi\tilde q\,\Pi
-N_\xi\Pi v
=
NQ_\xi\Pi
+(Q-\tilde q)N_\xi\Pi
-N_\xi\Pi v.
\]
Since $Q_\xi<0$ and $N_t-\sigma N_\xi<0$,
\[
NQ_\xi\Pi=-N|Q_\xi|\Pi,\qquad
\frac12(N_t-\sigma N_\xi)v^2
=
-\frac12|N_t-\sigma N_\xi|v^2.
\]
This proves the exact decomposition
\eqref{B9-Qb-L34}--\eqref{RR4-L34}.

Under the a priori assumption,
\[
A\sim1,\qquad n\sim\tilde n\sim N\sim1,\qquad
\Pi(n|\tilde n)\sim u^2,\qquad
\left|n\ln\frac n{\tilde n}\right|\le C|u|.
\]
Together with
$|N_\xi|\sim|Q_\xi|$ and
$|N_t-\sigma N_\xi|=|\Lambda|\,|N_\xi|$,
these relations give \eqref{GR-weighted-control-L34}.

For $\mathcal R_{R,2}$, Sobolev's inequality and the a priori bound give
$\|v\|_{L^\infty}\le C\chi_1$, and hence
\[
|\mathcal R_{R,2}|
\le
C\chi_1
\int_{\mathbb R}A|N_\xi|\Pi(n|\tilde n)d\xi
\le
C\chi_1\mathcal G_R.
\]
For $\mathcal R_{R,4}$, Cauchy--Schwarz,
\eqref{GR-weighted-control-L34}, and \eqref{Nxi-infty-L34} yield
\[
\begin{aligned}
|\mathcal R_{R,4}|
&\le
C\int_{\mathbb R}A|N_\xi||u|
\left|\partial_\xi\ln\frac n{\tilde n}\right|d\xi\\
&\le
C\mathbf D^{1/2}
\left(
\int_{\mathbb R}A|N_\xi|^2u^2d\xi
\right)^{1/2}\\
&\le
C\sqrt{\delta_R}\,
\mathbf D^{1/2}\mathcal G_R^{1/2}\\
&\le
\varepsilon\mathbf D
+C_\varepsilon\delta_R\mathcal G_R.
\end{aligned}
\]
This proves \eqref{RR24-absorb-L34}.

Finally,
\[
Q-\tilde q=-(T-q_m),\qquad
\frac N{\tilde n}-1
=-\frac{S-n_m}{\tilde n}.
\]
By Lemma \ref{Interaction estimates},
\[
\|N_\xi(T-q_m)\|_{L^2}
+\|N_\xi(S-n_m)\|_{L^2}
\le
C\delta_R\delta_Se^{-c\delta_St}.
\]
Moreover,
\[
\|\Pi(n|\tilde n)\|_{L^2}
\le C\|u\|_{L^\infty}\|u\|
\le C\chi_1^2,
\qquad
\|uv\|_{L^2}
\le C\chi_1^2.
\]
The estimates \eqref{RR1-est-L34}--\eqref{RR3-est-L34} and
\eqref{RR13-time-L34} follow immediately.
\end{proof}

\subsection{$L^2$ estimate}
This section is dedicated to establish the $L^2$ estimate for $(n-\tilde{n},q-\tilde{q})$.

\begin{lemma}\label{4.L2 estimate}
Under the hypotheses of Proposition \ref{4 A priori estimates}, there exist
positive constants $\delta_{*}$ and $C$, independent of $\delta_0$, $\chi_1$
and $T$, such that, if $0<\delta_S<\delta_{*}$, then for every
$t\in[0,T]$,
\begin{equation}\label{l2 estimate}
\begin{aligned}
&\int_{\mathbb R}|n-\tilde n|^2\,d\xi
+\int_{\mathbb R}|q-\tilde q|^2\,d\xi
+\delta_S\int_0^t|\dot{\mathbf X}(\tau)|^2\,d\tau\\
&\quad
+\int_0^t\int_{\mathbb R}
\left|(\tilde n^S)_{\xi}^{-\mathbf X}\right|
|n-\tilde n|^2\,d\xi d\tau\\
&\quad
+\frac{\lambda}{\delta_S}
\int_0^t\int_{\mathbb R}
\left|(\tilde n^S)_{\xi}^{-\mathbf X}\right|
\left|q-\tilde q+\varphi(n)\right|^2\,d\xi d\tau\\
&\quad
+\int_0^t\int_{\mathbb R}
\left|(\tilde q^R)_{\xi}\right|
\left(
|n-\tilde n|^2
+|\lambda_1(\tilde n^R,\tilde q^R)|^2
|q-\tilde q|^2
\right)d\xi d\tau\\
&\quad
+\int_0^t\int_{\mathbb R}
\left|\partial_\xi(n-\tilde n)\right|^2\,d\xi d\tau\\
&\le
C\left(
\|n_0-\tilde n(\cdot,0)\|^2
+\|q_0-\tilde q(\cdot,0)\|^2
\right)
+C\delta_R^{1/3}.
\end{aligned}
\end{equation}
In the rarefaction terms,
$(\tilde n^R,\tilde q^R)$ is evaluated at $(\xi+\sigma\tau,\tau)$.
\end{lemma}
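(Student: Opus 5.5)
We will run the weighted relative-entropy estimate with the modulated weight $A b = a^{-\mathbf X}N/n_m$ rather than $a^{-\mathbf X}$ alone. Concretely, we redo the computation of Lemma \ref{relative entropy} for $\frac{d}{dt}\int_{\mathbb R} a^{-\mathbf X} b\,\eta(U|\widetilde U)\,d\xi$. Every term of \eqref{dt} reappears with an extra factor $b$. The derivatives $b_t$, $b_\xi$ (from the time derivative and the integrations by parts in $I_1,I_3$) produce exactly $\mathcal Q_b$ of \eqref{Qb-L34}. Since $b=1+O(\delta_R)$ and $b\ge1$ (because $N\ge n_m$), the shock terms $\mathbf B_1,\dots,\mathbf B_4$, $\mathbf G$, $\mathbf D$, $\dot{\mathbf X}\mathbf Y$ change only by factors $b$. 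We treat the difference $b-1=(N-n_m)/n_m$ as a rarefaction remainder of the same type as $\mathcal R_{\mathrm{rw},1},\mathcal R_{\mathrm{rw},2}$. The shift ODE \eqref{X} is left unchanged, so \eqref{X'Y} still holds modulo a term $\dot{\mathbf X}\int a_\xi(b-1)(\dots)$. This term is bounded by $\delta_S|\dot{\mathbf X}|^2/8$ plus an interaction remainder.

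Next we assemble the estimate. Lemma \ref{Leading order estimates} controls the shock block by $-C\int|S_\xi|w^2$, a cubic term, and $\mathcal R_{\mathrm{rw},1},\mathcal R_{\mathrm{rw},2}$. The cubic term $\int a_\xi|w|^3\le C\chi_1\int a_\xi w^2$ is absorbed. For the shock-remainder pieces $\dot{\mathbf X}\mathbf Y_3,\dots,\dot{\mathbf X}\mathbf Y_7$ and $\mathbf B_4$, we use Young's inequality against $\delta_S|\dot{\mathbf X}|^2$, $\mathbf G_2$, the good shock term and $\mathbf D$, as in \cite{K. Choi 20 M3,Liu}. The rarefaction block is handled by Lemma \ref{rarefaction-modulation-L34}. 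First, $\mathbf B_9^{(b)}+\mathcal Q_b=-\mathcal G_R+\sum\mathcal R_{R,j}$. Then $\mathcal R_{R,2},\mathcal R_{R,4}$ are absorbed into $\varepsilon\mathbf D+C(\chi_1+C_\varepsilon\delta_R)\mathcal G_R$, and $\mathcal R_{R,1},\mathcal R_{R,3}$ are time-integrable. Finally $\mathbf B_5\le \varepsilon\mathbf D+C\delta_R\mathcal G_R$ by \eqref{GR-weighted-control-L34}. For $\mathcal R_{\mathrm{rw},1},\mathcal R_{\mathrm{rw},2}$ and $\mathbf B_8$ we use Lemma \ref{Interaction estimates}. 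With $\|u\|_{L^\infty}\le C\chi_1$, each of these terms is $\le C\chi_1\delta_R\delta_S e^{-c\delta_S t}$, or is bounded by $C\chi_1\|F_2\|_{L^1}$ with $\|F_2\|_{L^1}\le C\delta_R\delta_S e^{-c\delta_S t}$ after splitting $\partial_\xi$ of the products. Hence all of them integrate in time to $C\delta_R$.

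The genuinely non-integrable rarefaction sources are $\mathbf B_6$ and $\mathbf B_7$, which involve $N_{\xi\xi}$. For $\mathbf B_6$ we write $|\mathbf B_6|\le C\|N_{\xi\xi}\|_{L^\infty}\|u\|^2\le C\min\{\delta_R,t^{-1}\}\|u\|^2$, but this is not integrable. Instead we integrate by parts once: $\mathbf B_6=-\int (A b\Pi/\tilde n)_\xi N_\xi$. This gives terms $\le\varepsilon\mathbf D+C\delta_R\mathcal G_R$ plus the bounded interaction pieces. For $\mathbf B_7=\int Ab\,u N_{\xi\xi}/\tilde n$ we use $|\mathbf B_7|\le \|u\|_{L^\infty}\|N_{\xi\xi}\|_{L^1}\le C\|u\|^{1/2}\|u_\xi\|^{1/2}\|N_{\xi\xi}\|_{L^1}$. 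With $\|N_{\xi\xi}\|_{L^1}\le C\min\{\delta_R,t^{-1}\}$, Young's inequality gives
$$\varepsilon\|u_\xi\|^2+C\|u\|^{2/3}\min\{\delta_R,t^{-1}\}^{4/3}.$$
Since $\|u\|$ is bounded and $\int_0^\infty\min\{\delta_R,t^{-1}\}^{4/3}dt\le C\delta_R^{1/3}$, this is exactly the source of $\delta_R^{1/3}$ in \eqref{l2 estimate}.

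To convert $\mathbf D$ into $\int|u_\xi|^2$ we write $n\partial_\xi\ln(n/\tilde n)=u_\xi-u\tilde n_\xi/\tilde n$. The cross term is controlled by $C\int(|S_\xi|+|N_\xi|)^2u^2/\tilde n^2$, which is bounded by $\delta_S\cdot$(good shock term) plus $\delta_R\mathcal G_R$. We then choose $\varepsilon$, $\delta_S$, $\delta_R$ and $\chi_1$ small, integrate in time, and use $A b\sim1$ together with \eqref{4.phi n-ntilde}. The result is \eqref{l2 estimate}, where the $\mathcal G_R$ term is rewritten through $|N_t-\sigma N_\xi|=\Lambda^2|Q_\xi|$ and $N\sim1$. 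We expect the main obstacle to be keeping the $b$-modulation compatible with the shock leading-order cancellation: one must check that the $(b-1)$ perturbations of $\mathbf B_1,\mathbf B_2,\mathbf G_1$ and of the shift relation really enter only through $\mathcal R_{\mathrm{rw},\cdot}$-type interaction terms, and do not destroy the $O(\lambda)$ cancellation coming from \eqref{sigma-^2}.
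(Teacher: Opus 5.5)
Your overall architecture matches the paper's: the modulated entropy with weight $a^{-\mathbf X}N/n_m$, the conversion $\mathbf B_9^{(b)}+\mathcal Q_b=-\mathcal G_R+\sum_j\mathcal R_{R,j}$, the $b-1$ differences treated as localized remainders, and the $\|u\|_{L^\infty}\|N_{\xi\xi}\|_{L^1}$ plus Young (exponents $4$, $4/3$) argument that produces $\delta_R^{1/3}$. Your integration by parts in $\mathbf B_6$ is valid, and so is the bound $|\mathbf B_5|\le\varepsilon\mathbf D+C_\varepsilon\delta_R\mathcal G_R$; both are slightly cleaner than the paper's $\|N_{\xi\xi}\|^{4/3}$ and $\|N_\xi\|^4$ sources.

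However, two of your steps lose the factor $\lambda/\delta_S=\delta_S^{-1/2}$ carried by $a^{-\mathbf X}_\xi=\frac{\lambda}{\delta_S}|S_\xi|$. As written, they do not give constants uniform in $\delta_S$. The lemma requires this: $C$ and $\delta_*$ must be independent of $\chi_1$, and $\chi_1$ must be independent of $\delta_S$ in Proposition~\ref{4 A priori estimates}.

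\emph{First gap: the cubic term.} You bound
\[
\int a^{-\mathbf X}_\xi|w|^3\le C\chi_1\int a^{-\mathbf X}_\xi w^2=C\chi_1\frac{\lambda}{\delta_S}\int|S_\xi|w^2,
\]
but this cannot be absorbed.
\begin{itemize}
\item Lemma~\ref{Leading order estimates} uses the exact $O(\lambda)$ cancellation $n_m-q_m\sigma_m-\sigma_m^2=0$, which consumes all of $\mathbf G_1$.
\item The only coercive shock term left is then $c\,G^s=c\int|S_\xi|w^2$, which is of size $\delta_S\int_0^1 w^2\,dy$.
\item Absorption would therefore need $\chi_1\lesssim\sqrt{\delta_S}$. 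That ties the perturbation size to the shock strength, and in the closure it would force $\delta_R$ to be small relative to a power of $\delta_S$.
\end{itemize}
The fix is to spend the leftover quarter of $\mathbf D$. Using $\lambda=\sqrt{\delta_S}$, $\int|S_\xi|\,d\xi=\delta_S$ and $\|w\|_{L^\infty}^2\le C\|w\|\|w_\xi\|$,
\[
\int a^{-\mathbf X}_\xi|w|^3\le\frac{\lambda}{\delta_S}\|w\|_{L^\infty}^2\int|S_\xi||w|\le\|w\|_{L^\infty}^2\sqrt{G^s}\le C\chi_1\mathbf D^{1/2}\sqrt{G^s}\le\varepsilon\mathbf D+C_\varepsilon\chi_1^2G^s .
\]
The cubic parts of $\dot{\mathbf X}\mathbf Y_3,\dots,\dot{\mathbf X}\mathbf Y_7$ need the same device.

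\emph{Second gap: $\mathcal R_{\mathrm{rw},2}$.} Your claimed bound $C\chi_1\delta_R\delta_S e^{-c\delta_S t}$ drops the factor $\lambda/\delta_S$.
\begin{itemize}
\item Pairing the $L^1$ interaction estimate of Lemma~\ref{Interaction estimates} with $\|w\|_{L^\infty}^2$ actually gives $C\chi_1^2\delta_R\sqrt{\delta_S}\,e^{-c\delta_S t}$.
\item Its time integral is $C\chi_1^2\delta_R\delta_S^{-1/2}$, which is not bounded by $C\delta_R^{1/3}$ uniformly in $\delta_S$.
\item The loss comes from the shock layer having width about $1/\delta_S$, so the $L^1$ norm of the localized weight is too large.
\end{itemize}
Instead, pair the $L^\infty$ norm of the localized weight,
\[
\|a^{-\mathbf X}_\xi(|N-n_m|+|Q-q_m|)\|_{L^\infty}\le\Gamma_S(t)=C\delta_R\delta_S^{3/2}\left(e^{-ct}+e^{-c\delta_S t}\right),
\]
with $\|(u,v)\|^2\sim\mathcal E_m$. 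Then close by Gronwall, using $\int_0^\infty\Gamma_S\,d\tau\le C\delta_R\sqrt{\delta_S}$.

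This also settles the obstacle you flagged at the end. Set $\rho=b-1$. Every $(b-1)$-perturbation of $\mathbf B_1$, $\mathbf B_2$, $\mathbf G_1$, $\mathbf G_2$ and $\mathbf Y$ carries $\rho A_\xi$ or $\rho A S_\xi$. So does the $A_\xi N_\xi$ piece produced by your integration by parts in $\mathbf B_6$. Hence each of these is bounded by $C\Gamma_S\mathcal E_m$, plus $\varepsilon\delta_S|\dot{\mathbf X}|^2$, and never touches the leading-order cancellation. This holds only with the $L^\infty$ pairing, not with the $L^1$ pairing you used for $\mathcal R_{\mathrm{rw},2}$.
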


\begin{proof}
Set
\[
u:=n-\tilde n,\qquad
v:=q-\tilde q,\qquad
w:=\frac{u}{\tilde n},
\]
and introduce
\[
N(\xi,t):=\tilde n^R(\xi+\sigma t,t),\qquad
Q(\xi,t):=\tilde q^R(\xi+\sigma t,t),
\]
\[
S(\xi,t):=(\tilde n^S)^{-\mathbf X}(\xi),\qquad
T(\xi,t):=(\tilde q^S)^{-\mathbf X}(\xi).
\]
Thus
\[
\tilde n=N+S-n_m,\qquad
\tilde q=Q+T-q_m.
\]
Let
\[
\Lambda(\xi,t):=\lambda_1(N,Q).
\]
The exact rarefaction relations and the uniform bounds for $\Lambda$ are
recorded in Lemma \ref{rarefaction-modulation-L34}; in particular,
\eqref{rare-rel-L34-int}--\eqref{lambda-bounds-L34-int} hold.

We use the following rarefaction-weighted 
relative entropy
\begin{equation}\label{Em-L34-int}
b(\xi,t):=\frac{N(\xi,t)}{n_m},\qquad
\mathcal E_m(t):=
\int_{\mathbb R}a^{-\mathbf X}b
\left(
\Pi(n|\tilde n)+\frac{v^2}{2}
\right)d\xi.
\end{equation}
Since $a^{-\mathbf X}$, $b$, $n$ and $\tilde n$ are uniformly bounded above
and below under the a priori assumption,
\begin{equation}\label{Em-equivalence-L34-int}
c\big(\|u(t)\|^2+\|v(t)\|^2\big)
\le \mathcal E_m(t)
\le C\big(\|u(t)\|^2+\|v(t)\|^2\big).
\end{equation}

\emph{Step 1. Exact modulated relative entropy identity.}
Apply the calculation of Lemma \ref{relative entropy} with the multiplier
\[
h(\xi,t):=a^{-\mathbf X}(\xi)b(\xi,t).
\]
Since
\[
h_t=-\dot{\mathbf X}a^{-\mathbf X}_\xi b
+a^{-\mathbf X}b_t,\qquad
h_\xi=a^{-\mathbf X}_\xi b+a^{-\mathbf X}b_\xi,
\]
the part containing $a^{-\mathbf X}_\xi b$ has exactly the same algebraic
structure as Lemma \ref{relative entropy}. Denote by
$\mathbf Y^{(b)},\mathbf B_i^{(b)},\mathbf G_j^{(b)}$ and
$\mathbf D^{(b)}$ the quantities
$\mathbf Y,\mathbf B_i,\mathbf G_j,\mathbf D$, respectively, with an
additional factor $b$ in each integrand. Then the calculation gives the
exact identity
\begin{equation}\label{mod-RE-identity-L34}
\begin{aligned}
\frac{d}{dt}\mathcal E_m(t)
={}&
\dot{\mathbf X}\mathbf Y^{(b)}
+\sum_{i=1}^{9}\mathbf B_i^{(b)}
-\mathbf G_1^{(b)}-\mathbf G_2^{(b)}-\mathbf D^{(b)}
+\mathcal Q_b(t),
\end{aligned}
\end{equation}
where $\mathcal Q_b$ is the correction defined in
\eqref{Qb-L34}.  The formula follows directly from the six terms
$I_1,\ldots,I_6$ in the proof of Lemma \ref{relative entropy}: every
occurrence of $h_\xi$ is split into
$a^{-\mathbf X}_\xi b+a^{-\mathbf X}b_\xi$.

The point at which the rarefaction differs from the barotropic
Navier--Stokes case is precisely the term $\mathbf B_9^{(b)}+\mathcal Q_b$.
By Lemma \ref{rarefaction-modulation-L34},
\begin{equation*}
\mathbf B_9^{(b)}+\mathcal Q_b
=
-\mathcal G_R+\sum_{j=1}^{4}\mathcal R_{R,j},
\end{equation*}
with $\mathcal G_R$ and $\mathcal R_{R,j}$ given by
\eqref{GR-def-L34-int}--\eqref{RR4-L34}.

Let
\[
\rho:=b-1=\frac{N-n_m}{n_m}.
\]
Subtracting the corresponding unmodulated quantities, \eqref{mod-RE-identity-L34}
and \eqref{B9-Qb-L34} give the exact identity
\begin{equation}\label{exact-split-L34}
\begin{aligned}
\frac{d}{dt}\mathcal E_m
={}&
\left[
\dot{\mathbf X}\mathbf Y
+\sum_{i=1}^{8}\mathbf B_i
-\mathbf G_1-\mathbf G_2-\mathbf D
\right]
-\mathcal G_R\\
&+\mathcal R_{\mathrm{mod}}
+\sum_{j=1}^{4}\mathcal R_{R,j},
\end{aligned}
\end{equation}
where
\begin{equation}\label{Rmod-def-L34}
\begin{aligned}
\mathcal R_{\mathrm{mod}}
:={}&
\dot{\mathbf X}\big(\mathbf Y^{(b)}-\mathbf Y\big)
+\sum_{i=1}^{8}\big(\mathbf B_i^{(b)}-\mathbf B_i\big)\\
&-\big(\mathbf G_1^{(b)}-\mathbf G_1\big)
-\big(\mathbf G_2^{(b)}-\mathbf G_2\big)
-\big(\mathbf D^{(b)}-\mathbf D\big).
\end{aligned}
\end{equation}

\emph{Step 2. The shock-contraction part.}
Using \eqref{X'Y}, Lemma \ref{Leading order estimates}, the corrected signed
definitions of $\mathbf Y_3,\ldots,\mathbf Y_7$, and the same estimates for
$\mathbf B_4,\ldots,\mathbf B_8$ as in the shock-contraction argument, we
obtain, for some $c_0>0$,
\begin{equation}\label{shock-block-L34}
\begin{aligned}
&\dot{\mathbf X}\mathbf Y
+\sum_{i=1}^{8}\mathbf B_i
-\mathbf G_1-\mathbf G_2-\mathbf D\\
&\le
-c_0\left[
\delta_S|\dot{\mathbf X}|^2
+G^s(t)+\mathbf G_2(t)+\mathbf D(t)
\right]
+\mathcal R_{\mathrm{rw},1}(t)
+\mathcal R_{\mathrm{rw},2}(t)
+\mathcal S_{\mathrm{sh}}(t),
\end{aligned}
\end{equation}
where
\[
G^s(t):=
\int_{\mathbb R}|S_\xi|
\left|\frac{u}{\tilde n}\right|^2d\xi
\]
and
\begin{equation}\label{Ssh-L34}
\begin{aligned}
\mathcal S_{\mathrm{sh}}(t)
\le{}&
C\chi_1^{1/2}\|N_\xi\|^4
+C\chi_1^{4/3}\|N_{\xi\xi}\|^{4/3}
+C\chi_1^{2/3}\|N_{\xi\xi}\|_{L^1}^{4/3}\\
&+C\chi_1\,\mathcal I_{RS}(t).
\end{aligned}
\end{equation}
Here
\begin{equation}\label{IRS-L34-int}
\begin{aligned}
\mathcal I_{RS}(t):=
\bigg\|&
|N_\xi(T-q_m)|
+\frac1{\sigma}|S_\xi(N-n_m)|\\
&+\sigma|T_\xi(Q-q_m)|
+|N_\xi(S-n_m)|
\bigg\|.
\end{aligned}
\end{equation}
For example, the cubic shock term is estimated by
\[
\begin{aligned}
\int_{\mathbb R}a^{-\mathbf X}_\xi|w|^3d\xi
&=
\frac{\lambda}{\delta_S}
\int_{\mathbb R}|S_\xi||w|^3d\xi\\
&\le
\frac{\lambda}{\sqrt{\delta_S}}
\|w\|_{L^\infty}^2\sqrt{G^s(t)}
\le
C\chi_1\|w_\xi\|\sqrt{G^s(t)}\\
&\le
\varepsilon\mathbf D(t)+C_\varepsilon\chi_1^2G^s(t),
\end{aligned}
\]
where $\lambda=\sqrt{\delta_S}$, the Sobolev inequality, and
$\int|S_\xi|d\xi=\delta_S$ have been used. The estimates of
$\mathbf Y_3,\ldots,\mathbf Y_7$ are unchanged by the correction made in
Lemma \ref{relative entropy}, because absolute values are used only after
the exact signed decomposition of $\mathbf Y$.

The first rarefaction remainder in Lemma \ref{Leading order estimates} is
integrable by Lemma \ref{Interaction estimates}:
\begin{equation}\label{Rrw1-time-L34-int}
\begin{aligned}
\mathcal R_{\mathrm{rw},1}(t)
&\le
\frac{C}{\delta_S}\|w\|_{L^\infty}^2
\|S_\xi(N-n_m)\|_{L^1}^2\\
&\le
C\chi_1^2\delta_R^2\delta_S e^{-c\delta_St},
\end{aligned}
\end{equation}
and consequently
\begin{equation}\label{Rrw1-int-L34-int}
\int_0^t\mathcal R_{\mathrm{rw},1}(\tau)d\tau
\le C\chi_1^2\delta_R^2.
\end{equation}

We next estimate $\mathcal R_{\mathrm{rw},2}$ without losing the large
factor $\lambda/\delta_S$. By the wave separation proved in
Lemma \ref{Interaction estimates}, define
\begin{equation}\label{Gamma-L34-int}
\Gamma_S(t):=
C\delta_R\delta_S^{3/2}
\left(e^{-ct}+e^{-c\delta_St}\right).
\end{equation}
The same splitting into
$\Omega_-(t)=\{\xi\le-\sigma t/2\}$ and
$\Omega_+(t)=\{\xi\ge-\sigma t/2\}$ gives
\begin{equation}\label{Linfty-interaction-L34-int}
\left\|
a^{-\mathbf X}_\xi
\left(|N-n_m|+|Q-q_m|\right)
\right\|_{L^\infty}
\le \Gamma_S(t).
\end{equation}
Hence, using $|w|^3\le C\chi_1|w|^2$,
\begin{equation}\label{Rrw2-L34-int}
\mathcal R_{\mathrm{rw},2}(t)
\le C\Gamma_S(t)\mathcal E_m(t),
\end{equation}
and
\begin{equation}\label{Gamma-time-L34-int}
\int_0^\infty\Gamma_S(\tau)d\tau
\le C\delta_R\sqrt{\delta_S}.
\end{equation}

Finally, Lemma \ref{properties of nRqR} gives
\begin{equation}\label{nRL1}
\begin{aligned}
&\int_0^\infty\|N_\xi\|^4\,d\tau\le C\delta_R^3,\\
&\int_0^\infty\|N_{\xi\xi}\|^{4/3}\,d\tau
+\int_0^\infty\|N_{\xi\xi}\|_{L^1}^{4/3}\,d\tau
\le C\delta_R^{1/3}.
\end{aligned}
\end{equation}
Moreover, Lemma \ref{Interaction estimates}, including
\eqref{interaction estimates q}--\eqref{interaction estimates mixed},
implies
\begin{equation}\label{nSnR}
\mathcal I_{RS}(t)
\le C\delta_R\delta_Se^{-c\delta_St},
\end{equation}
and hence
\begin{equation}\label{nSnR'}
\int_0^\infty\mathcal I_{RS}(\tau)d\tau
\le C\delta_R.
\end{equation}
Consequently,
\begin{equation}\label{Ssh-time-L34-int}
\int_0^t\mathcal S_{\mathrm{sh}}(\tau)d\tau
\le C\delta_R^{1/3}.
\end{equation}

\emph{Step 3. Estimate of the modulation remainder.}
We now estimate every term in $\mathcal R_{\mathrm{mod}}$ appearing in
\eqref{exact-split-L34}.  For brevity, set
\[
A:=a^{-\mathbf X},\qquad
L:=\ln\frac n{\tilde n},\qquad
z:=v+\varphi(n),\qquad
\rho:=b-1=\frac{N-n_m}{n_m}.
\]
Under the a priori assumption, uniformly on $[0,T]$,
\begin{equation}\label{basic-mod-bounds-L34}
|\rho|\le C\delta_R,\qquad
A\sim1,\qquad
n\sim\tilde n\sim N\sim1,
\end{equation}
and
\begin{equation}\label{nonlinear-equivalence-L34}
\Pi(n|\tilde n)\sim u^2,\qquad
|nL|\le C|u|,\qquad
|\varphi(n)|\le C|u|,\qquad
|z|^2\le C(u^2+v^2).
\end{equation}
Moreover, by \eqref{Linfty-interaction-L34-int},
\begin{equation}\label{rho-localized-L34}
\|\rho A_\xi\|_{L^\infty}
\le C\Gamma_S(t).
\end{equation}
Since
\[
A_\xi=\frac{\lambda}{\delta_S}|S_\xi|,
\qquad \lambda=\sqrt{\delta_S},
\qquad
|S_{\xi\xi}|\le C\delta_S|S_\xi|,
\]
we also have
\begin{equation}\label{rho-shock-localized-L34}
\|\rho A S_\xi\|_{L^\infty}
\le C\sqrt{\delta_S}\,\Gamma_S(t),
\qquad
\|\rho A S_{\xi\xi}\|_{L^\infty}
\le C\delta_S^{3/2}\Gamma_S(t).
\end{equation}
After decreasing $\delta_0$ if necessary, we may assume
$\sup_{t\ge0}\Gamma_S(t)\le1$.

We shall repeatedly use the rarefaction controls
\eqref{GR-weighted-control-L34} and \eqref{Nxi-infty-L34} established in
Lemma \ref{rarefaction-modulation-L34}.

We first estimate the modulation remainder
\[
\mathcal R_{\mathrm{mod}}
=
\dot{\mathbf X}(\mathbf Y^{(b)}-\mathbf Y)
+\sum_{i=1}^{8}(\mathbf B_i^{(b)}-\mathbf B_i)
-(\mathbf G_1^{(b)}-\mathbf G_1)
-(\mathbf G_2^{(b)}-\mathbf G_2)
-(\mathbf D^{(b)}-\mathbf D).
\]

\emph{(i) The shift contribution.}
From the definition of $\mathbf Y$,
\begin{equation}\label{DeltaY-L34}
\begin{aligned}
\mathbf Y^{(b)}-\mathbf Y
={}&
-\int_{\mathbb R}\rho A_\xi
\left(\Pi(n|\tilde n)+\frac{v^2}{2}\right)d\xi\\
&+\int_{\mathbb R}\rho A
\left(S_\xi\frac{u}{\tilde n}+T_\xi v\right)d\xi.
\end{aligned}
\end{equation}
The estimate $|\dot{\mathbf X}|\le C\chi_1$, already used in the proof of
Lemma \ref{Interaction estimates}, together with
\eqref{rho-localized-L34} and \eqref{Em-equivalence-L34-int}, gives
\begin{equation}\label{DeltaY-first-L34}
|\dot{\mathbf X}|
\left|
\int_{\mathbb R}\rho A_\xi
\left(\Pi(n|\tilde n)+\frac{v^2}{2}\right)d\xi
\right|
\le C\Gamma_S(t)\mathcal E_m(t).
\end{equation}
For the second term in \eqref{DeltaY-L34}, using
$T_\xi=-\sigma^{-1}S_\xi$, Lemma \ref{Interaction estimates}, and the
a priori bound,
\[
\begin{aligned}
&\left|
\int_{\mathbb R}\rho A
\left(S_\xi\frac{u}{\tilde n}+T_\xi v\right)d\xi
\right|\\
&\qquad\le
C\|S_\xi(N-n_m)\|_{L^2}
(\|u\|+\|v\|)\\
&\qquad\le
C\chi_1\delta_R\delta_S^{3/2}e^{-c\delta_St}.
\end{aligned}
\]
Hence, for every $\varepsilon>0$,
\begin{equation}\label{DeltaY-second-L34}
\begin{aligned}
&|\dot{\mathbf X}|
\left|
\int_{\mathbb R}\rho A
\left(S_\xi\frac{u}{\tilde n}+T_\xi v\right)d\xi
\right|\\
&\qquad\le
\varepsilon\delta_S|\dot{\mathbf X}|^2
+C_\varepsilon\chi_1^2\delta_R^2\delta_S^2
e^{-2c\delta_St}\\
&\qquad\le
\varepsilon\delta_S|\dot{\mathbf X}|^2
+C_\varepsilon\chi_1^2\delta_R\delta_S
e^{-c\delta_St}.
\end{aligned}
\end{equation}

\emph{(ii) The localized terms
$\mathbf B_1,\mathbf B_2,\mathbf B_3,\mathbf G_1,\mathbf G_2$.}
By their definitions,
\[
\begin{aligned}
\mathbf B_1^{(b)}-\mathbf B_1
&=\frac{\sigma}{2}\int_{\mathbb R}\rho A_\xi|\varphi(n)|^2d\xi,\\
\mathbf B_2^{(b)}-\mathbf B_2
&=-\int_{\mathbb R}\rho A_\xi\tilde q\,\Pi(n|\tilde n)d\xi,\\
\mathbf G_1^{(b)}-\mathbf G_1
&=\sigma\int_{\mathbb R}\rho A_\xi\Pi(n|\tilde n)d\xi,\\
\mathbf G_2^{(b)}-\mathbf G_2
&=\frac{\sigma}{2}\int_{\mathbb R}\rho A_\xi|z|^2d\xi.
\end{aligned}
\]
Thus \eqref{rho-localized-L34}--\eqref{nonlinear-equivalence-L34} imply
\begin{equation}\label{DeltaB12G12-L34}
\begin{aligned}
&|\mathbf B_1^{(b)}-\mathbf B_1|
+|\mathbf B_2^{(b)}-\mathbf B_2|
+|\mathbf G_1^{(b)}-\mathbf G_1|
+|\mathbf G_2^{(b)}-\mathbf G_2|\\
&\qquad\le C\Gamma_S(t)\mathcal E_m(t).
\end{aligned}
\end{equation}
Similarly,
\[
\mathbf B_3^{(b)}-\mathbf B_3
=
\int_{\mathbb R}\rho A S_{\xi\xi}
\frac{\Pi(n|\tilde n)}{\tilde n}\,d\xi,
\]
and hence, by \eqref{rho-shock-localized-L34},
\begin{equation}\label{DeltaB3-L34}
|\mathbf B_3^{(b)}-\mathbf B_3|
\le
C\delta_S^{3/2}\Gamma_S(t)\mathcal E_m(t)
\le C\Gamma_S(t)\mathcal E_m(t).
\end{equation}

\emph{(iii) The term $\mathbf B_4^{(b)}-\mathbf B_4$.}
We have
\[
\mathbf B_4^{(b)}-\mathbf B_4
=
\int_{\mathbb R}\rho
\left[
A\frac{S_\xi}{\tilde n}-A_\xi
\right]nL\,L_\xi\,d\xi.
\]
By \eqref{rho-localized-L34},
\eqref{rho-shock-localized-L34}, and
\eqref{nonlinear-equivalence-L34},
\[
|\mathbf B_4^{(b)}-\mathbf B_4|
\le
C\Gamma_S(t)\|u\|\|L_\xi\|.
\]
Since $\mathbf D\sim\|L_\xi\|^2$ and
$\mathcal E_m\sim\|u\|^2+\|v\|^2$,
\begin{equation}\label{DeltaB4-L34}
\begin{aligned}
|\mathbf B_4^{(b)}-\mathbf B_4|
&\le
\varepsilon\mathbf D(t)
+C_\varepsilon\Gamma_S(t)^2\mathcal E_m(t)\\
&\le
\varepsilon\mathbf D(t)
+C_\varepsilon\Gamma_S(t)\mathcal E_m(t).
\end{aligned}
\end{equation}

\emph{(iv) The term $\mathbf B_5^{(b)}-\mathbf B_5$.}
Using \eqref{basic-mod-bounds-L34},
\eqref{nonlinear-equivalence-L34}, and Cauchy--Schwarz,
\[
\begin{aligned}
|\mathbf B_5^{(b)}-\mathbf B_5|
&\le
C\delta_R
\int_{\mathbb R}A|N_\xi||u||L_\xi|\,d\xi\\
&\le
C\delta_R
\left(\int_{\mathbb R}A|N_\xi|u^2d\xi\right)^{1/2}
\left(\int_{\mathbb R}A|N_\xi||L_\xi|^2d\xi\right)^{1/2}.
\end{aligned}
\]
By \eqref{GR-weighted-control-L34}, \eqref{Nxi-infty-L34}, and
$\mathbf D\sim\|L_\xi\|^2$,
\[
|\mathbf B_5^{(b)}-\mathbf B_5|
\le
C\delta_R^{3/2}\mathcal G_R(t)^{1/2}\mathbf D(t)^{1/2}.
\]
Therefore
\begin{equation}\label{DeltaB5-L34}
|\mathbf B_5^{(b)}-\mathbf B_5|
\le
\varepsilon\mathbf D(t)
+C_\varepsilon\delta_R^3\mathcal G_R(t)
\le
\varepsilon\mathbf D(t)
+C_\varepsilon\delta_R\mathcal G_R(t).
\end{equation}

\emph{(v) The combination
$(\mathbf B_6^{(b)}-\mathbf B_6)+
(\mathbf B_7^{(b)}-\mathbf B_7)$.}
Since
\[
F_1=S_{\xi\xi}-\tilde n_{\xi\xi}=-N_{\xi\xi},
\]
the two terms must be kept together.  Using
$\Pi(n|\tilde n)+u=nL$, we obtain the exact identity
\begin{equation}\label{DeltaB67-identity-L34}
\begin{aligned}
&(\mathbf B_6^{(b)}-\mathbf B_6)
+(\mathbf B_7^{(b)}-\mathbf B_7)\\
&\qquad=
\int_{\mathbb R}\rho A N_{\xi\xi}
\frac{\Pi(n|\tilde n)+u}{\tilde n}\,d\xi\\
&\qquad=
\int_{\mathbb R}\rho A N_{\xi\xi}
\frac n{\tilde n}\ln\frac n{\tilde n}\,d\xi.
\end{aligned}
\end{equation}
For $|w|\le C\chi_1\ll1$,
\[
\left|
\frac n{\tilde n}\ln\frac n{\tilde n}
\right|
=|(1+w)\ln(1+w)|
\le C|w|.
\]
Consequently, by Sobolev's inequality and
$\mathbf D\sim\|w_\xi\|^2$,
\[
\begin{aligned}
&|(\mathbf B_6^{(b)}-\mathbf B_6)
+(\mathbf B_7^{(b)}-\mathbf B_7)|\\
&\qquad\le
C\delta_R\|N_{\xi\xi}\|_{L^1}\|w\|_{L^\infty}\\
&\qquad\le
C\delta_R\chi_1^{1/2}
\|N_{\xi\xi}\|_{L^1}\mathbf D(t)^{1/4}.
\end{aligned}
\]
Young's inequality with exponents $4$ and $4/3$ yields
\begin{equation}\label{DeltaB67-L34}
\begin{aligned}
&|(\mathbf B_6^{(b)}-\mathbf B_6)
+(\mathbf B_7^{(b)}-\mathbf B_7)|\\
&\qquad\le
\varepsilon\mathbf D(t)
+C_\varepsilon\delta_R^{4/3}\chi_1^{2/3}
\|N_{\xi\xi}\|_{L^1}^{4/3}\\
&\qquad\le
\varepsilon\mathbf D(t)
+C_\varepsilon\delta_R\mathcal S_{\mathrm{sh}}(t),
\end{aligned}
\end{equation}
where we used $0<\delta_R\le1$ and \eqref{Ssh-L34}.

\emph{(vi) The term $\mathbf B_8^{(b)}-\mathbf B_8$.}
From the definition of $\mathbf B_8$,
\[
|\mathbf B_8^{(b)}-\mathbf B_8|
\le
C\delta_R\|u\|\|F_2\|.
\]
The decomposition of $F_2$ used in Step 2 and
\eqref{IRS-L34-int} give
$\|F_2\|\le C\mathcal I_{RS}(t)$. Hence
\begin{equation}\label{DeltaB8-L34}
|\mathbf B_8^{(b)}-\mathbf B_8|
\le
C\delta_R\chi_1\mathcal I_{RS}(t)
\le
C\delta_R\mathcal S_{\mathrm{sh}}(t).
\end{equation}

\emph{(vii) The diffusion difference.}
We have
\[
\mathbf D^{(b)}-\mathbf D
=
\int_{\mathbb R}\rho A n|L_\xi|^2d\xi,
\]
so that
\begin{equation}\label{DeltaD-L34}
|\mathbf D^{(b)}-\mathbf D|
\le C\delta_R\mathbf D(t).
\end{equation}
After fixing $\varepsilon>0$, we choose $\delta_0$ so small that
$C\delta_R\le\varepsilon$; then the right-hand side of
\eqref{DeltaD-L34} is bounded by $\varepsilon\mathbf D(t)$.

Collecting
\eqref{DeltaY-first-L34}--\eqref{DeltaD-L34}, we obtain, for every
fixed $\varepsilon>0$ and for $\delta_0$ sufficiently small,
\begin{equation}\label{Rmod-est-L34}
\begin{aligned}
|\mathcal R_{\mathrm{mod}}(t)|
\le{}&
\varepsilon\left[
\delta_S|\dot{\mathbf X}|^2
+\mathbf D(t)
\right]
+C_\varepsilon\delta_R\mathcal G_R(t)\\
&+C_\varepsilon\Gamma_S(t)\mathcal E_m(t)
+C_\varepsilon\delta_R\mathcal S_{\mathrm{sh}}(t)
+C_\varepsilon\chi_1^2\delta_R\delta_Se^{-c\delta_St}.
\end{aligned}
\end{equation}
In particular, \eqref{Rmod-est-L34} is stronger than the estimate obtained
by adding the nonnegative terms
$\varepsilon G^s+\varepsilon\mathbf G_2$ to its right-hand side.

The explicit rarefaction remainders
$\mathcal R_{R,1},\ldots,\mathcal R_{R,4}$ have already been isolated
and estimated in Lemma \ref{rarefaction-modulation-L34}; in particular,
\eqref{RR24-absorb-L34} and \eqref{RR13-time-L34} will be used below.

\emph{Step 4. Integration in time and closure.}
Substituting
\eqref{shock-block-L34} and \eqref{Rmod-est-L34}
into the exact identity \eqref{exact-split-L34}, and using
Lemma \ref{rarefaction-modulation-L34}, choosing
$\varepsilon$, $\chi_1$, $\delta_R$ and $\delta_S$ sufficiently small, and
then integrating over $[0,t]$, we obtain
\begin{equation}\label{integrated-L34}
\begin{aligned}
&\mathcal E_m(t)
+c\int_0^t
\left[
\delta_S|\dot{\mathbf X}(\tau)|^2
+G^s(\tau)
+\mathbf G_2(\tau)
+\mathbf D(\tau)
+\mathcal G_R(\tau)
\right]d\tau\\
&\le
\mathcal E_m(0)
+C\delta_R^{1/3}
+C\int_0^t\Gamma_S(\tau)\mathcal E_m(\tau)d\tau .
\end{aligned}
\end{equation}
Here we used
\eqref{Rrw1-int-L34-int},
\eqref{Ssh-time-L34-int},
\eqref{Gamma-time-L34-int}, and
\eqref{RR13-time-L34}; the lower-order contributions
$C\delta_R$, $C\delta_R^2$ and
$C\chi_1^2\delta_R$ are bounded by $C\delta_R^{1/3}$.

By Gronwall's inequality and \eqref{Gamma-time-L34-int},
\begin{equation}\label{integrated-Gronwall-L34}
\begin{aligned}
&\mathcal E_m(t)
+\int_0^t
\left[
\delta_S|\dot{\mathbf X}|^2
+G^s+\mathbf G_2+\mathbf D+\mathcal G_R
\right]d\tau\\
&\le
C\exp\left(C\delta_R\sqrt{\delta_S}\right)
\left[
\mathcal E_m(0)+\delta_R^{1/3}
\right]
\le
C\left[
\mathcal E_m(0)+\delta_R^{1/3}
\right],
\end{aligned}
\end{equation}
after decreasing $\delta_*$ if necessary.

It remains to recover the quantities in \eqref{l2 estimate}.
By \eqref{4.a'} and the definition of $\mathbf G_2$,
\begin{equation}\label{G2-char-L34-int}
\mathbf G_2(t)
=
\frac{\sigma\lambda}{2\delta_S}
\int_{\mathbb R}|S_\xi|
|v+\varphi(n)|^2d\xi.
\end{equation}
Thus the large factor $\lambda/\delta_S$ is retained only in the
characteristic combination $v+\varphi(n)$.

By \eqref{rare-rel-L34-int},
\eqref{lambda-bounds-L34-int},
the equivalence $\Pi(n|\tilde n)\sim u^2$, and the uniform bounds for
$a^{-\mathbf X}$ and $N$,
\begin{equation}\label{GR-controls-L34-int}
\mathcal G_R(t)
\ge
c\int_{\mathbb R}|Q_\xi|
\left(u^2+|\Lambda|^2v^2\right)d\xi.
\end{equation}
Also,
\begin{equation}\label{Gs-controls-L34-int}
\int_{\mathbb R}|S_\xi|u^2d\xi
\le CG^s(t).
\end{equation}

Finally,
\[
u_\xi=\tilde n\,w_\xi+\tilde n_\xi w,
\qquad
\tilde n_\xi=N_\xi+S_\xi.
\]
Since $\mathbf D\sim\|w_\xi\|^2$,
$\|S_\xi\|_{L^\infty}\le C\delta_S^2$, and
$\|N_\xi\|_{L^\infty}\le C\delta_R$, we have
\begin{align}
\|u_\xi\|^2
&\le
C\mathbf D
+C\int_{\mathbb R}|S_\xi|^2w^2d\xi
+C\int_{\mathbb R}|N_\xi|^2w^2d\xi\nonumber\\
&\le
C\mathbf D
+C\delta_S^2G^s
+C\delta_R\mathcal G_R.
\label{ux-control-L34-int}
\end{align}
Combining
\eqref{Em-equivalence-L34-int},
\eqref{integrated-Gronwall-L34},
\eqref{G2-char-L34-int},
\eqref{GR-controls-L34-int},
\eqref{Gs-controls-L34-int}, and
\eqref{ux-control-L34-int}
gives \eqref{l2 estimate}.
\end{proof}

\subsubsection{$H^1$ estimate}
We next establish estimates  for $\|\partial_{\xi}(n-\tilde{n})\|$ and $\|\partial_{\xi}(q-\tilde{q})\|$.

\begin{lemma}\label{4.First-order estimate}
Under the hypotheses of Proposition \ref{4 A priori estimates}, there exists
a constant $C>0$, independent of $\delta_0$, $\chi_1$ and $T$, such that,
for every $t\in[0,T]$,
\begin{equation}\label{4.First order estimate}
\begin{aligned}
&\int_{\mathbb R}
\left(
|\partial_\xi(n-\tilde n)|^2
+|\partial_\xi(q-\tilde q)|^2
\right)d\xi
+\int_0^t\int_{\mathbb R}
\frac{1}{\tilde n}
\left|\partial_\xi^2(n-\tilde n)\right|^2
\,d\xi d\tau\\
&\le
C\left(
\|n_0-\tilde n(\cdot,0)\|_{H^1}^2
+\|q_0-\tilde q(\cdot,0)\|_{H^1}^2
\right)\\
&\quad
+C(\chi_1+\delta_R+\delta_S)
\int_0^t
\|\partial_\xi(q-\tilde q)(\cdot,\tau)\|^2\,d\tau
+C\delta_R^{1/3}.
\end{aligned}
\end{equation}
\end{lemma}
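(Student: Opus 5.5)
We will derive the first-order estimate by differentiating the perturbation system \eqref{4.perturbed system} in $\xi$ and running a weighted $L^2$ energy argument on $(u_\xi,v_\xi)$, with $u:=n-\tilde n$, $v:=q-\tilde q$. The nonlinear flux is written as
\[
nq-\tilde n\tilde q=\tilde n v+\tilde q u+uv .
\]
The $u$-equation then reads
\[
u_t-\sigma u_\xi-\dot{\mathbf X}S_\xi-(\tilde n v+\tilde q u+uv)_\xi=u_{\xi\xi}-F_1-F_2,
\]
and the $v$-equation reads
\[
v_t-\sigma v_\xi-\dot{\mathbf X}T_\xi-u_\xi=0.
\]
The natural symmetrizer comes from the entropy Hessian \eqref{eta''}. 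We multiply the $u$-equation by $-\frac{1}{\tilde n}u_{\xi\xi}$ and the differentiated $v$-equation by $v_\xi$, and then add the two.

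The key cancellation is in the hyperbolic coupling. The term $-\frac{1}{\tilde n}u_{\xi\xi}\cdot(-\tilde n v_\xi)=u_{\xi\xi}v_\xi$ cancels against $-u_{\xi\xi}v_\xi$ from the differentiated $v$-equation. Only commutator terms survive, and these carry $\tilde n_\xi=N_\xi+S_\xi$, so they are bounded by $(\delta_R+\delta_S)(\|u_\xi\|^2+\|v_\xi\|^2)$ plus localized terms. The diffusion term produces the good term $\int_0^t\int \frac1{\tilde n}|u_{\xi\xi}|^2$. Since the multiplier for $u_t$ is $1/\tilde n$, the time derivative produces $\frac{d}{dt}\int\frac{|u_\xi|^2}{2\tilde n}$. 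Its error $\int \partial_t(1/\tilde n)|u_\xi|^2$ involves $N_t$ and $\dot{\mathbf X}S_\xi$, and is bounded by $C(\delta_R+\chi_1\delta_S^2)\|u_\xi\|^2$. That is already time-integrable by the dissipation $\int_0^t\|u_\xi\|^2$ from Lemma~\ref{4.L2 estimate}.

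The remaining terms are handled one group at a time.

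\emph{Transport and nonlinear terms.} The terms $\sigma u_\xi u_{\xi\xi}/\tilde n$ and $(\tilde q u)_\xi u_{\xi\xi}/\tilde n$ are treated by Cauchy--Schwarz. We absorb $\frac14\int|u_{\xi\xi}|^2/\tilde n$ and leave $C\|u_\xi\|^2$, which is controlled by Lemma~\ref{4.L2 estimate}, together with $\|\tilde q_\xi u\|^2\le C(\delta_R+\delta_S)(\mathcal G_R+G^s)$-type terms. The term $(uv)_\xi u_{\xi\xi}$ splits as $u_\xi v+uv_\xi$. It is bounded by $\chi_1(\|u_{\xi\xi}\|^2+\|v_\xi\|^2+\|u_\xi\|^2)$ using $\|u\|_{L^\infty},\|v\|_{L^\infty}\le C\chi_1$. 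The $v_\xi$ part is exactly the source of the $\chi_1\int\|v_\xi\|^2$ term on the right of \eqref{4.First order estimate}.

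\emph{Shift terms.} We use $|\dot{\mathbf X}|\le C\|u\|_{L^\infty}$ from \eqref{4.supF}, together with $\|S_\xi\|\le C\delta_S^{3/2}$ and $\|S_{\xi\xi}\|\le C\delta_S^{5/2}$. This gives the bound $\varepsilon\|u_{\xi\xi}\|^2+C\delta_S|\dot{\mathbf X}|^2$, which is integrable by the $L^2$ estimate.

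\emph{Terms with $v_\xi$ weighted by the wave derivatives.} Terms such as $\tilde n_\xi v u_{\xi\xi}/\tilde n$ and $\tilde q_\xi u_\xi v_\xi$ split by Cauchy--Schwarz into $(\delta_R+\delta_S)\|v_\xi\|^2$ plus weighted terms like $\int(|N_\xi|+|S_\xi|)v^2$. Where this weighted $v^2$ meets the shock part $S_\xi$, we must not use $\|S_\xi\|_\infty$ naively. Instead we write $v=(v+\varphi(n))-\varphi(n)$ and use $\mathbf G_2$ and $G^s$, which gives $\int|S_\xi|v^2\le C\frac{\delta_S}{\lambda}\mathbf G_2+CG^s$. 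On the rarefaction part we use $\mathcal G_R$.

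\emph{Source terms.} For $F_1=-N_{\xi\xi}$ we estimate $\int\frac{1}{\tilde n}|F_1||u_{\xi\xi}|\le\varepsilon\|u_{\xi\xi}\|^2+C\|N_{\xi\xi}\|^2$. By \eqref{nRx}, $\int_0^\infty\|N_{\xi\xi}\|^2\le C\int_0^\infty\min\{\delta_R^2,\tau^{-2}\}\,d\tau\le C\delta_R\le C\delta_R^{1/3}$. For $F_2$ we bound $\|F_2\|$ by $\mathcal I_{RS}$ plus terms of the form $\|N_{\xi}T_\xi\|$, $\|N_{\xi\xi}(T-q_m)\|$, and $\|S_{\xi\xi}(Q-q_m)\|$. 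These are exponentially decaying by the splitting argument of Lemma~\ref{Interaction estimates}. We then use $\int_0^\infty\|F_2\|^2\le C\delta_R$.

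The main obstacle is the commutator and transport terms in the differentiated $v$-equation. There is no dissipation for $v_\xi$, so every term of the form $\int(\tilde n_\xi$ or $\tilde q_\xi)\,u_\xi v_\xi$ or $\int\dot{\mathbf X}T_{\xi\xi}v_\xi$ must be bounded either by small multiples of $\|v_\xi\|^2$ (time-integrated) or by quantities already controlled. This is why the estimate is stated with $C(\chi_1+\delta_R+\delta_S)\int_0^t\|v_\xi\|^2$ left on the right-hand side; it will be closed later by recovering $\int_0^t\|v_\xi\|^2$ from the $u$-equation. For $\dot{\mathbf X}T_{\xi\xi}v_\xi$ we will use $\delta_S|\dot{\mathbf X}|^2+\delta_S^{-1}\|T_{\xi\xi}\|^2\|v_\xi\|^2$. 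Since $\|T_{\xi\xi}\|^2\le C\delta_S^5$, this is fine, though we may instead bound it by $|\dot{\mathbf X}|\,\|T_{\xi\xi}\|\,\|v_\xi\|\le\delta_S\|v_\xi\|^2+C\delta_S|\dot{\mathbf X}|^2$.

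Finally, we integrate in time, use the equivalence $\tilde n\sim1$, and invoke Lemma~\ref{4.L2 estimate} to bound all the time-integrated weighted $L^2$ and $\|u_\xi\|^2$ terms by the initial data plus $C\delta_R^{1/3}$. This yields \eqref{4.First order estimate}.
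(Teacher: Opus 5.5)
Your proposal follows the paper's proof. It uses the same multipliers ($-\tilde n^{-1}u_{\xi\xi}$ on the $u$-equation, $v_\xi$ on the differentiated $v$-equation), the same cancellation of $\int v_\xi u_{\xi\xi}$, and the same closure through $\mathcal G_R$, $G^s$, the characteristic shock dissipation $\mathbf G_2$, and the time-integrable sources $\|N_{\xi\xi}\|^2$ and $\mathcal I_{RS}^2$. The deviations are minor: you bound $\dot{\mathbf X}\int T_{\xi\xi}v_\xi$ by $\delta_S\|v_\xi\|^2+C\delta_S|\dot{\mathbf X}|^2$, which the $C\delta_S\int_0^t\|v_\xi\|^2$ on the right allows, where the paper integrates by parts onto $S_{\xi\xi\xi}v$. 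You also omit that the weight $1/\tilde n$ produces a term $\int\tilde n_\xi\tilde n^{-2}u_tu_\xi$; substituting the $u$-equation into it gives further $\tilde n_\xi$-commutator terms ($J_5$--$J_{11}$ and $\Theta_3$ in the paper), all of the type you already handle.
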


\begin{proof}
Set
\[
u:=n-\tilde n,\qquad
v:=q-\tilde q,
\]
and use the notation introduced in Lemma \ref{4.L2 estimate},
\[
N(\xi,t):=\tilde n^R(\xi+\sigma t,t),\qquad
Q(\xi,t):=\tilde q^R(\xi+\sigma t,t),
\]
\[
S(\xi,t):=(\tilde n^S)^{-\mathbf X}(\xi),\qquad
T(\xi,t):=(\tilde q^S)^{-\mathbf X}(\xi),
\]
so that
\[
\tilde n=N+S-n_m,\qquad
\tilde q=Q+T-q_m.
\]
We also write
\[
F:=F_1+F_2,\qquad
z:=v+\varphi(n).
\]
Recall that
\begin{equation}\label{nq'}
\begin{aligned}
\partial_\xi(nq-\tilde n\tilde q)
&=\partial_\xi(\tilde n v+qu)\\
&=\tilde n\,v_\xi+\tilde n_\xi v+q\,u_\xi+u\,q_\xi.
\end{aligned}
\end{equation}

Multiplying the second equation of \eqref{4.perturbed system} by
$-v_{\xi\xi}$ and integrating over $\mathbb R$, we obtain
\begin{equation}\label{4.nabla q-q'}
\frac{d}{dt}\int_{\mathbb R}\frac{|v_\xi|^2}{2}\,d\xi
=
\int_{\mathbb R}v_\xi u_{\xi\xi}\,d\xi
-\dot{\mathbf X}(t)
\int_{\mathbb R}T_\xi v_{\xi\xi}\,d\xi.
\end{equation}

We next derive the corresponding identity for $u_\xi$.  Since
\[
N_t-\sigma N_\xi=(NQ)_\xi,\qquad
S_t=-\dot{\mathbf X}S_\xi,
\]
we have
\begin{equation}\label{ntilde-t-L35}
\tilde n_t
=
\sigma N_\xi+(NQ)_\xi-\dot{\mathbf X}S_\xi.
\end{equation}
Multiplying the first equation of \eqref{4.perturbed system} by
$-\tilde n^{-1}u_{\xi\xi}$, integrating by parts, and using
\eqref{nq'}--\eqref{ntilde-t-L35}, we obtain the exact identity
\begin{equation}\label{4.4.55}
\begin{aligned}
&\frac{d}{dt}
\int_{\mathbb R}\frac{|u_\xi|^2}{2\tilde n}\,d\xi
+\int_{\mathbb R}\frac{|u_{\xi\xi}|^2}{\tilde n}\,d\xi\\
&=
\dot{\mathbf X}(t)\sum_{i=1}^{3}\Theta_i(t)
+\sum_{i=1}^{12}J_i(t)
-\int_{\mathbb R}v_\xi u_{\xi\xi}\,d\xi,
\end{aligned}
\end{equation}
where
\begin{equation}\label{Theta123-L35}
\begin{aligned}
\Theta_1(t)
&:=
-\int_{\mathbb R}\frac{S_\xi}{\tilde n}u_{\xi\xi}\,d\xi,\\
\Theta_2(t)
&:=
\int_{\mathbb R}
\frac{S_\xi}{2\tilde n^2}|u_\xi|^2\,d\xi,\\
\Theta_3(t)
&:=
\int_{\mathbb R}
\frac{\tilde n_\xi S_\xi}{\tilde n^2}u_\xi\,d\xi,
\end{aligned}
\end{equation}
and
\begin{equation}\label{J1-J12-L35}
\begin{aligned}
J_1(t)
&:=-\int_{\mathbb R}
\frac{\tilde n_\xi}{\tilde n}\,v\,u_{\xi\xi}\,d\xi,\\
J_2(t)
&:=-\int_{\mathbb R}
\frac{q}{\tilde n}\,u_\xi u_{\xi\xi}\,d\xi,\\
J_3(t)
&:=-\int_{\mathbb R}
\frac{u}{\tilde n}\,q_\xi u_{\xi\xi}\,d\xi,\\
J_4(t)
&:=\frac{\sigma}{2}\int_{\mathbb R}
\frac{S_\xi}{\tilde n^2}|u_\xi|^2\,d\xi,\\
J_5(t)
&:=\int_{\mathbb R}
\frac{\tilde n_\xi}{\tilde n}\,v_\xi u_\xi\,d\xi,\\
J_6(t)
&:=\int_{\mathbb R}
\left(\frac{\tilde n_\xi}{\tilde n}\right)^2
u_\xi v\,d\xi,\\
J_7(t)
&:=\int_{\mathbb R}
\frac{\tilde n_\xi q}{\tilde n^2}|u_\xi|^2\,d\xi,\\
J_8(t)
&:=\int_{\mathbb R}
\frac{\tilde n_\xi}{\tilde n^2}
u\,u_\xi q_\xi\,d\xi,\\
J_9(t)
&:=\int_{\mathbb R}
\frac{\tilde n_\xi}{\tilde n^2}
u_\xi u_{\xi\xi}\,d\xi,\\
J_{10}(t)
&:=-\int_{\mathbb R}
\frac{(NQ)_\xi}{2\tilde n^2}|u_\xi|^2\,d\xi,\\
J_{11}(t)
&:=-\int_{\mathbb R}
\frac{\tilde n_\xi}{\tilde n^2}F\,u_\xi\,d\xi,\\
J_{12}(t)
&:=\int_{\mathbb R}
\frac{F}{\tilde n}u_{\xi\xi}\,d\xi.
\end{aligned}
\end{equation}
Notice in particular that the transport contribution in $J_4$ contains
$S_\xi$, not $\tilde n_\xi$.  This follows from the cancellation
\[
\frac{\sigma\tilde n_\xi-\tilde n_t}{2\tilde n^2}
=
\frac{\sigma S_\xi-(NQ)_\xi+\dot{\mathbf X}S_\xi}
{2\tilde n^2},
\]
where the last term belongs to $\dot{\mathbf X}\Theta_2$ and the
$(NQ)_\xi$ term is $J_{10}$.

Adding \eqref{4.nabla q-q'} and \eqref{4.4.55}, the mixed term cancels and
we arrive at
\begin{equation}\label{4.D1,XY+I}
\begin{aligned}
&\frac{d}{dt}
\int_{\mathbb R}
\left(
\frac{|u_\xi|^2}{2\tilde n}
+\frac{|v_\xi|^2}{2}
\right)d\xi
+\mathbf D_1(t)\\
&=
\dot{\mathbf X}(t)\sum_{i=1}^{4}\Theta_i(t)
+\sum_{i=1}^{12}J_i(t),
\end{aligned}
\end{equation}
where
\[
\Theta_4(t):=-\int_{\mathbb R}T_\xi v_{\xi\xi}\,d\xi,
\qquad
\mathbf D_1(t):=
\int_{\mathbb R}\frac{|u_{\xi\xi}|^2}{\tilde n}\,d\xi.
\]

We now estimate the right-hand side of \eqref{4.D1,XY+I}.
By Lemma \ref{properties of the 2-viscous shock wave},
\[
\|S_\xi\|_{L^\infty}\le C\delta_S^2,\qquad
\|S_\xi\|_{L^2}^2\le C\delta_S^3,
\]
and
\[
\|\tilde n_\xi\|_{L^\infty}
+\|\tilde q_\xi\|_{L^\infty}
\le C(\delta_R+\delta_S).
\]
Hence
\begin{equation}\label{Theta1-L35}
|\dot{\mathbf X}\Theta_1|
\le
C\delta_S^2|\dot{\mathbf X}|^2
+C\delta_S\mathbf D_1.
\end{equation}
Using the a priori bound $\|u_\xi\|\le\chi_1$,
\begin{equation}\label{Theta2-L35}
|\dot{\mathbf X}\Theta_2|
\le
C\delta_S^2|\dot{\mathbf X}|^2
+C\delta_S^2\chi_1^2\|u_\xi\|^2.
\end{equation}
Similarly,
\begin{equation}\label{Theta3-L35}
|\dot{\mathbf X}\Theta_3|
\le
C\delta_S^2|\dot{\mathbf X}|^2
+C(\delta_R+\delta_S)^2\delta_S\|u_\xi\|^2.
\end{equation}

For $\Theta_4$, we first record the third-derivative estimate
\begin{equation}\label{shock-third-L35}
|S_{\xi\xi\xi}|
\le C\delta_S^2|S_\xi|.
\end{equation}
Indeed, differentiating the explicit profile formula \eqref{nSqS}
(or, equivalently, the scalar shock ODE obtained from
\eqref{traveling wave}) twice, and using
$|S_{\xi\xi}|\le C\delta_S|S_\xi|$ and
$|S_\xi|\le C\delta_S^2$, gives \eqref{shock-third-L35}.
Since $T_\xi=-\sigma^{-1}S_\xi$, two integrations by parts give
\[
\Theta_4(t)
=
\frac{1}{\sigma}\int_{\mathbb R}S_{\xi\xi\xi}v\,d\xi.
\]
Set
\[
Z_S(t):=\int_{\mathbb R}|S_\xi||z|^2\,d\xi,\qquad
U_S(t):=\int_{\mathbb R}|S_\xi||u|^2\,d\xi.
\]
Since $|v|\le |z|+C|u|$,
\[
|\Theta_4(t)|
\le
C\delta_S^{5/2}
\left(Z_S(t)^{1/2}+U_S(t)^{1/2}\right).
\]
Thus, for every $\varepsilon>0$,
\begin{equation}\label{Theta4-L35}
|\dot{\mathbf X}\Theta_4|
\le
\varepsilon\delta_S|\dot{\mathbf X}|^2
+C_\varepsilon\delta_S^4
\left(Z_S+U_S\right).
\end{equation}

We next estimate the terms $J_i$.  The rarefaction dissipation
$\mathcal G_R$ introduced in \eqref{GR-def-L34-int} satisfies
\begin{equation}\label{GR-L35-control}
\int_{\mathbb R}|N_\xi|(u^2+v^2)\,d\xi
\le C\mathcal G_R(t),
\end{equation}
while
\begin{equation}\label{Nxi-L35}
\|N_\xi(t)\|_{L^\infty}\le C\delta_R.
\end{equation}
We shall also use
\begin{equation}\label{4.4.51}
\|v\|_{L^\infty}
\le C\|v\|_{H^1}
\le C\chi_1,
\end{equation}
and
\begin{equation}\label{LH1}
\|u\|_{L^\infty}
\le C\|u\|_{H^1}
\le C\chi_1.
\end{equation}

For $J_1$, split $\tilde n_\xi=N_\xi+S_\xi$.  By
\eqref{GR-L35-control}--\eqref{Nxi-L35},
\[
\begin{aligned}
\left|
\int_{\mathbb R}\frac{N_\xi}{\tilde n}v\,u_{\xi\xi}\,d\xi
\right|
&\le
C\left(\int|N_\xi|v^2\right)^{1/2}
\left(\int|N_\xi||u_{\xi\xi}|^2\right)^{1/2}\\
&\le
C\sqrt{\delta_R}\,
\mathcal G_R^{1/2}\mathbf D_1^{1/2}.
\end{aligned}
\]
For the shock part, using $|v|\le |z|+C|u|$,
\[
\left|
\int_{\mathbb R}\frac{S_\xi}{\tilde n}v\,u_{\xi\xi}\,d\xi
\right|
\le
C\delta_S\mathbf D_1^{1/2}
\left(Z_S^{1/2}+U_S^{1/2}\right).
\]
Consequently,
\begin{equation}\label{J1-L35}
|J_1|
\le
\varepsilon\mathbf D_1
+C_\varepsilon\delta_R\mathcal G_R
+C_\varepsilon\delta_S^2(Z_S+U_S).
\end{equation}

Since $q$ is uniformly bounded under the a priori assumption,
\begin{equation}\label{J2-L35}
|J_2|
\le
\varepsilon\mathbf D_1
+C_\varepsilon\|u_\xi\|^2.
\end{equation}

For $J_3$, we use
\begin{equation}\label{4.qxi}
q_\xi=v_\xi+Q_\xi+T_\xi,\qquad
|Q_\xi|\le C|N_\xi|,\qquad
|T_\xi|=\frac1\sigma|S_\xi|.
\end{equation}
The $v_\xi$ part is bounded by
$C\chi_1(\|v_\xi\|^2+\mathbf D_1)$.
The $Q_\xi$ and $T_\xi$ parts are estimated as for $J_1$.
Thus
\begin{equation}\label{J3-L35}
|J_3|
\le
C\chi_1\|v_\xi\|^2
+(C\chi_1+\varepsilon)\mathbf D_1
+C_\varepsilon\delta_R\mathcal G_R
+C_\varepsilon\delta_S^2U_S.
\end{equation}

Since $J_4$ contains only $S_\xi$,
\begin{equation}\label{J4-L35}
|J_4|
\le C\delta_S^2\|u_\xi\|^2.
\end{equation}
Furthermore,
\begin{equation}\label{J5-L35}
|J_5|
\le
C(\delta_R+\delta_S)
\left(\|u_\xi\|^2+\|v_\xi\|^2\right).
\end{equation}

For $J_6$, using
$(N_\xi+S_\xi)^2\le2N_\xi^2+2S_\xi^2$,
\eqref{GR-L35-control}, and $|v|\le|z|+C|u|$, we find
\begin{equation}\label{J6-L35}
\begin{aligned}
|J_6|
\le{}&
C\delta_R\mathcal G_R
+C\delta_R^2\|u_\xi\|^2\\
&+C\delta_S^3
\left(
\|u_\xi\|^2+Z_S+U_S
\right).
\end{aligned}
\end{equation}
Since $q$ is bounded,
\begin{equation}\label{J7-L35}
|J_7|
\le
C(\delta_R+\delta_S)\|u_\xi\|^2.
\end{equation}

To estimate $J_8$, write
\[
q_\xi=v_\xi+\tilde q_\xi,
\qquad
|\tilde n_\xi|+|\tilde q_\xi|
\le C(|N_\xi|+|S_\xi|).
\]
The part containing $v_\xi$ is bounded by
\[
C\chi_1(\delta_R+\delta_S)
\left(\|u_\xi\|^2+\|v_\xi\|^2\right).
\]
For the background part, using
$(|N_\xi|+|S_\xi|)^2\le2|N_\xi|^2+2|S_\xi|^2$,
we obtain
\begin{equation}\label{J8-L35}
\begin{aligned}
|J_8|
\le{}&
C\chi_1(\delta_R+\delta_S)
\left(\|u_\xi\|^2+\|v_\xi\|^2\right)
+C\delta_R\mathcal G_R\\
&+C\delta_R^2\|u_\xi\|^2
+C\delta_S^3\left(U_S+\|u_\xi\|^2\right).
\end{aligned}
\end{equation}

For $J_9$, using
$\|\tilde n_\xi\|_{L^\infty}\le C(\delta_R+\delta_S)$,
\begin{equation}\label{J9-L35}
|J_9|
\le
C(\delta_R+\delta_S)
\left(\mathbf D_1+\|u_\xi\|^2\right).
\end{equation}
Since
\[
|(NQ)_\xi|\le C|N_\xi|,
\]
we also have
\begin{equation}\label{J10-L35}
|J_{10}|
\le C\delta_R\|u_\xi\|^2.
\end{equation}

Finally, $F_1=-N_{\xi\xi}$ and, by the decomposition of $F_2$ used in
Step 2 of Lemma \ref{4.L2 estimate},
\begin{equation}\label{F-L35}
\|F\|^2
\le
C\|N_{\xi\xi}\|^2
+C\mathcal I_{RS}(t)^2.
\end{equation}
Therefore,
\begin{align}
|J_{11}|
&\le
C(\delta_R+\delta_S)
\left(
\|u_\xi\|^2+\|N_{\xi\xi}\|^2+\mathcal I_{RS}^2
\right),
\label{J11-L35}\\
|J_{12}|
&\le
\varepsilon\mathbf D_1
+C_\varepsilon
\left(
\|N_{\xi\xi}\|^2+\mathcal I_{RS}^2
\right).
\label{J12-L35}
\end{align}

Combining
\eqref{Theta1-L35}--\eqref{J12-L35}, choosing $\varepsilon>0$ fixed
and then $\delta_0,\chi_1$ sufficiently small, we obtain
\begin{equation}\label{4.D1}
\begin{aligned}
&\frac{d}{dt}
\int_{\mathbb R}
\left(
\frac{|u_\xi|^2}{\tilde n}+|v_\xi|^2
\right)d\xi
+c\,\mathbf D_1(t)\\
&\le
C\delta_S|\dot{\mathbf X}(t)|^2
+C\frac{\delta_S}{\lambda}Z_S(t)
+C U_S(t)
+C\delta_R\mathcal G_R(t)\\
&\quad
+C\|u_\xi(t)\|^2
+C(\chi_1+\delta_R+\delta_S)\|v_\xi(t)\|^2\\
&\quad
+C\|N_{\xi\xi}(t)\|^2
+C\mathcal I_{RS}(t)^2.
\end{aligned}
\end{equation}
Here we used $\lambda=\sqrt{\delta_S}$ and $0<\delta_S\le1$ to
replace all higher powers of $\delta_S$ multiplying $Z_S$ and $U_S$
by the displayed terms.

We now integrate \eqref{4.D1} over $[0,t]$.  By Lemma
\ref{properties of nRqR},
\[
\|N_{\xi\xi}(t)\|
\le C\min\{\delta_R,t^{-1}\},
\]
and hence
\begin{equation}\label{Nxx-L2time-L35}
\int_0^\infty\|N_{\xi\xi}(\tau)\|^2\,d\tau
\le C\delta_R.
\end{equation}
Moreover, by \eqref{nSnR},
\begin{equation}\label{IRS-L2time-L35}
\int_0^\infty\mathcal I_{RS}(\tau)^2\,d\tau
\le C\delta_R^2\delta_S
\le C\delta_R.
\end{equation}

The remaining terms in the first two lines on the right-hand side of
\eqref{4.D1} are controlled by Lemma \ref{4.L2 estimate}: indeed,
\[
\delta_S\int_0^t|\dot{\mathbf X}|^2,\qquad
\int_0^tU_S,\qquad
\int_0^t\mathcal G_R,\qquad
\int_0^t\|u_\xi\|^2
\]
are bounded by the right-hand side of \eqref{l2 estimate}, while
\[
\frac{\delta_S}{\lambda}Z_S
=
\delta_S
\left(
\frac{\lambda}{\delta_S}Z_S
\right)
\]
is bounded by the characteristic shock dissipation in
\eqref{G2-char-L34-int}.  Therefore,
\[
\begin{aligned}
&\|u_\xi(t)\|^2+\|v_\xi(t)\|^2
+\int_0^t\mathbf D_1(\tau)\,d\tau\\
&\le
C\left(
\|n_0-\tilde n(\cdot,0)\|_{H^1}^2
+\|q_0-\tilde q(\cdot,0)\|_{H^1}^2
\right)\\
&\quad
+C(\chi_1+\delta_R+\delta_S)
\int_0^t\|v_\xi(\tau)\|^2\,d\tau
+C\delta_R^{1/3},
\end{aligned}
\]
where we also used $\delta_R\le\delta_R^{1/3}$ for
$0<\delta_R\le1$.  This is exactly \eqref{4.First order estimate}.
\end{proof}

We are now ready to establish the $H^1$ estimate.
\begin{lemma}\label{4.H1-estimate}
Under the hypotheses of Proposition \ref{4 A priori estimates}, there exists
a constant $C>0$, independent of $\delta_0$, $\chi_1$ and $T$, such that,
for every $t\in[0,T]$,
\begin{equation}\label{4.H1 estimate}
\begin{aligned}
&\|\partial_\xi(n-\tilde n)(\cdot,t)\|^2
+\|\partial_\xi(q-\tilde q)(\cdot,t)\|^2\\
&\quad
+\int_0^t
\|\partial_\xi^2(n-\tilde n)(\cdot,\tau)\|^2\,d\tau
+\int_0^t
\|\partial_\xi(q-\tilde q)(\cdot,\tau)\|^2\,d\tau\\
&\le
C\left(
\|n_0-\tilde n(\cdot,0)\|_{H^1}^2
+\|q_0-\tilde q(\cdot,0)\|_{H^1}^2
\right)
+C\delta_R^{1/3}.
\end{aligned}
\end{equation}
\end{lemma}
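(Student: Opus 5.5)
The proof will recover the missing dissipation $\int_0^t\|v_\xi\|^2\,d\tau$, $v:=q-\tilde q$, from the first equation of \eqref{4.perturbed system}, and then feed it back into Lemma \ref{4.First-order estimate}. As in the preceding lemmas, write $u:=n-\tilde n$ and $F=F_1+F_2$. By \eqref{nq'}, the first perturbation equation can be solved for the term $\tilde n\,v_\xi$:
\[
\tilde n\,v_\xi
=u_t-\sigma u_\xi-\dot{\mathbf X}S_\xi-u_{\xi\xi}+F-\tilde n_\xi v-q u_\xi-u q_\xi .
\]
Multiplying this identity by $v_\xi$ and integrating over $\mathbb R$ gives $\int\tilde n|v_\xi|^2\,d\xi$ on the left, which is coercive since $\tilde n\ge n_+/2$. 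Every term on the right except $\int u_t v_\xi\,d\xi$ is handled by Cauchy--Schwarz:
\begin{itemize}
\item $\sigma u_\xi$, $qu_\xi$ and $u_{\xi\xi}$ give $\tfrac14\|v_\xi\|^2+C(\|u_\xi\|^2+\|u_{\xi\xi}\|^2)$.
\item $\dot{\mathbf X}S_\xi$ gives $\tfrac14\|v_\xi\|^2+C\delta_S^3|\dot{\mathbf X}|^2$.
\item $F$ gives $C(\|N_{\xi\xi}\|^2+\mathcal I_{RS}^2)$, by \eqref{F-L35}.
\item $\tilde n_\xi v$ gives $C\delta_R\mathcal G_R+C\delta_S^3(Z_S+U_S)$, after splitting $\tilde n_\xi=N_\xi+S_\xi$ and using \eqref{GR-L35-control} together with $|v|\le|z|+C|u|$.
\item $uq_\xi=u(v_\xi+Q_\xi+T_\xi)$ gives $C\chi_1\|v_\xi\|^2+C\delta_R\mathcal G_R+C\delta_S^3U_S$, since $\|u\|_{L^\infty}\le C\chi_1$.
\end{itemize}

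The main obstacle is the time-derivative term $\int u_t v_\xi\,d\xi$, which I would handle by integrating by parts in time:
\[
\int u_tv_\xi\,d\xi=\frac{d}{dt}\int u v_\xi\,d\xi-\int u\,v_{\xi t}\,d\xi
=\frac{d}{dt}\int u v_\xi\,d\xi+\int u_\xi v_t\,d\xi .
\]
The second equation of \eqref{4.perturbed system} gives $v_t=\sigma v_\xi+\dot{\mathbf X}T_\xi+u_\xi$. Hence
\[
\int u_\xi v_t\,d\xi\le \tfrac14\|v_\xi\|^2+C\|u_\xi\|^2+C\delta_S^3|\dot{\mathbf X}|^2 ,
\]
because $\|u_\xi\|\,|\dot{\mathbf X}|\,\|T_\xi\|\le \|u_\xi\|^2+C\delta_S^3|\dot{\mathbf X}|^2$. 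The boundary term satisfies
\[
\Big|\int uv_\xi\,d\xi\Big|\le \eta\|v_\xi\|^2+C_\eta\|u\|^2
\]
at both time $t$ and time $0$.

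Integrating over $[0,t]$ then yields
\[
\int_0^t\|v_\xi\|^2\,d\tau\le \eta\|v_\xi(t)\|^2+C\big(\|u(t)\|^2+\|(u,v)(0)\|_{H^1}^2\big)+C\int_0^t\big(\|u_\xi\|^2+\|u_{\xi\xi}\|^2+\delta_S|\dot{\mathbf X}|^2+\mathcal G_R+U_S+\tfrac{\lambda}{\delta_S}Z_S\big)\,d\tau+C\delta_R^{1/3}.
\]
Here I use \eqref{Nxx-L2time-L35} and \eqref{IRS-L2time-L35} for the source terms, and I absorb $C\chi_1\|v_\xi\|^2$ into the left-hand side for small $\chi_1$.

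To close, observe that all the time integrals on the right except $\int_0^t\|u_{\xi\xi}\|^2\,d\tau$ are bounded by Lemma \ref{4.L2 estimate}, using \eqref{G2-char-L34-int} for $Z_S$. For $\int_0^t\|u_{\xi\xi}\|^2\,d\tau\le C\int_0^t\mathbf D_1\,d\tau$, together with $\|v_\xi(t)\|^2$, I invoke Lemma \ref{4.First-order estimate}. That lemma contributes $C(\chi_1+\delta_R+\delta_S)\int_0^t\|v_\xi\|^2\,d\tau$, which I absorb into the left-hand side once $\chi_1$ and $\delta_0$ are small. Choosing $\eta$ small and adding a suitable multiple of the $\|v_\xi\|^2$ estimate to \eqref{4.First order estimate} then gives \eqref{4.H1 estimate}.
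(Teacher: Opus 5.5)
Your proposal is correct and follows essentially the paper's argument. You isolate $\tilde n\,v_\xi$ from the $n$-equation, integrate $\int u_t v_\xi\,d\xi$ by parts in time using the $q$-equation, bound the boundary terms by $\eta\|v_\xi\|^2+C_\eta\|u\|^2$, and close via Lemma~\ref{4.First-order estimate} by absorbing $C(\chi_1+\delta_R+\delta_S)\int_0^t\|v_\xi\|^2\,d\tau$. The differences are cosmetic: the paper pairs $-\int u\,v_{\xi t}$ with $-\sigma\int u_\xi v_\xi$ for an exact cancellation instead of bounding them separately, your fixed $\tfrac14$'s should be $\varepsilon$'s small relative to $\inf\tilde n$, and the shock-weighted remainders carry $\delta_S^2$ (since $|S_\xi|^2\le C\delta_S^2|S_\xi|$) rather than $\delta_S^3$, which is still harmless.
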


\begin{proof}
We keep the notation
\[
u:=n-\tilde n,\qquad
v:=q-\tilde q,\qquad
N:=\tilde n^R(\xi+\sigma t,t),\qquad
Q:=\tilde q^R(\xi+\sigma t,t),
\]
\[
S:=(\tilde n^S)^{-\mathbf X},\qquad
T:=(\tilde q^S)^{-\mathbf X},
\]
introduced in the previous lemmas.  Set
\begin{equation}\label{Eq-L36}
\mathcal E_q(t):=
\int_{\mathbb R}\tilde n\,|v_\xi|^2\,d\xi,
\end{equation}
and, as in Lemma \ref{4.First-order estimate},
\[
Z_S(t):=\int_{\mathbb R}|S_\xi|
|v+\varphi(n)|^2\,d\xi,\qquad
U_S(t):=\int_{\mathbb R}|S_\xi||u|^2\,d\xi.
\]
Since $\tilde n$ is uniformly bounded above and below,
\begin{equation}\label{Eq-equivalence-L36}
c\|v_\xi(t)\|^2
\le\mathcal E_q(t)
\le C\|v_\xi(t)\|^2.
\end{equation}

By Lemma \ref{4.First-order estimate}, it remains only to estimate
$\int_0^t\|v_\xi(\tau)\|^2d\tau$.  From the first equation of
\eqref{4.perturbed system} and \eqref{nq'}, we have
\begin{equation}\label{q-qxi}
\begin{aligned}
\mathcal E_q(t)
-\frac{d}{dt}\int_{\mathbb R}u\,v_\xi\,d\xi
={}&
-\int_{\mathbb R}u\,v_{\xi t}\,d\xi
-\sigma\int_{\mathbb R}u_\xi v_\xi\,d\xi\\
&-\int_{\mathbb R}q\,u_\xi v_\xi\,d\xi
-\int_{\mathbb R}u\,q_\xi v_\xi\,d\xi\\
&-\dot{\mathbf X}(t)\int_{\mathbb R}S_\xi v_\xi\,d\xi
-\int_{\mathbb R}u_{\xi\xi}v_\xi\,d\xi\\
&-\int_{\mathbb R}\tilde n_\xi v\,v_\xi\,d\xi
+\int_{\mathbb R}F_1v_\xi\,d\xi
+\int_{\mathbb R}F_2v_\xi\,d\xi.
\end{aligned}
\end{equation}
Differentiating the second equation of \eqref{4.perturbed system} with
respect to $\xi$ yields
\begin{equation}\label{4.4.61}
v_{\xi t}-\sigma v_{\xi\xi}
-\dot{\mathbf X}(t)T_{\xi\xi}
-u_{\xi\xi}=0.
\end{equation}
Using \eqref{4.4.61} in the first two terms on the right-hand side of
\eqref{q-qxi} and integrating by parts, we get the exact cancellation
\begin{equation}\label{K12-L36}
\begin{aligned}
&-\int_{\mathbb R}u\,v_{\xi t}\,d\xi
-\sigma\int_{\mathbb R}u_\xi v_\xi\,d\xi\\
&\qquad=
\dot{\mathbf X}(t)
\int_{\mathbb R}u_\xi T_\xi\,d\xi
+\int_{\mathbb R}|u_\xi|^2\,d\xi.
\end{aligned}
\end{equation}
Since $T_\xi=-\sigma^{-1}S_\xi$ and
$\|S_\xi\|_{L^2}^2\le C\delta_S^3$,
\begin{equation}\label{K12-est-L36}
\left|
\dot{\mathbf X}\int_{\mathbb R}u_\xi T_\xi\,d\xi
\right|
\le
C\delta_S^2|\dot{\mathbf X}|^2
+C\delta_S\|u_\xi\|^2.
\end{equation}

We now estimate the remaining terms in \eqref{q-qxi}.  First, since $q$
is uniformly bounded under the a priori assumption, for every
$\varepsilon>0$,
\begin{equation}\label{K3-L36}
\left|
\int_{\mathbb R}q\,u_\xi v_\xi\,d\xi
\right|
\le
\varepsilon\mathcal E_q(t)
+C_\varepsilon\|u_\xi(t)\|^2.
\end{equation}

For the term containing $q_\xi$, write
\[
q_\xi=v_\xi+Q_\xi+T_\xi.
\]
The first part is bounded by
$C\chi_1\mathcal E_q(t)$.  For the rarefaction part, using
$|Q_\xi|\le C|N_\xi|$, \eqref{GR-L35-control}, and
$\|Q_\xi\|_{L^\infty}\le C\delta_R$, we obtain
\[
\begin{aligned}
\int_{\mathbb R}|Q_\xi||u||v_\xi|\,d\xi
&\le
C\left(\int_{\mathbb R}|N_\xi|u^2d\xi\right)^{1/2}
\left(\int_{\mathbb R}|N_\xi||v_\xi|^2d\xi\right)^{1/2}\\
&\le
C\sqrt{\delta_R}\,
\mathcal G_R(t)^{1/2}\mathcal E_q(t)^{1/2}.
\end{aligned}
\]
For the shock part,
\[
\begin{aligned}
\int_{\mathbb R}|T_\xi||u||v_\xi|\,d\xi
&\le
C U_S(t)^{1/2}
\left(\int_{\mathbb R}|S_\xi||v_\xi|^2d\xi\right)^{1/2}\\
&\le
C\delta_S\,U_S(t)^{1/2}\mathcal E_q(t)^{1/2}.
\end{aligned}
\]
Consequently,
\begin{equation}\label{K4-L36}
\left|
\int_{\mathbb R}u\,q_\xi v_\xi\,d\xi
\right|
\le
(C\chi_1+\varepsilon)\mathcal E_q(t)
+C_\varepsilon\delta_R\mathcal G_R(t)
+C_\varepsilon\delta_S^2U_S(t).
\end{equation}

For the shift term, by
$\|S_\xi\|_{L^2}^2\le C\delta_S^3$,
\begin{equation}\label{K5-L36}
\left|
\dot{\mathbf X}
\int_{\mathbb R}S_\xi v_\xi\,d\xi
\right|
\le
\varepsilon\mathcal E_q(t)
+C_\varepsilon\delta_S^3|\dot{\mathbf X}(t)|^2.
\end{equation}
Also,
\begin{equation}\label{K6-L36}
\left|
\int_{\mathbb R}u_{\xi\xi}v_\xi\,d\xi
\right|
\le
\varepsilon\mathcal E_q(t)
+C_\varepsilon\mathbf D_1(t).
\end{equation}

We next estimate the coefficient-gradient term.  Since
$\tilde n_\xi=N_\xi+S_\xi$ and
$|v|\le |v+\varphi(n)|+C|u|$, the rarefaction part satisfies
\[
\begin{aligned}
\int_{\mathbb R}|N_\xi||v||v_\xi|\,d\xi
&\le
\left(\int_{\mathbb R}|N_\xi|v^2d\xi\right)^{1/2}
\left(\int_{\mathbb R}|N_\xi||v_\xi|^2d\xi\right)^{1/2}\\
&\le
C\sqrt{\delta_R}\,
\mathcal G_R(t)^{1/2}\mathcal E_q(t)^{1/2},
\end{aligned}
\]
whereas the shock part satisfies
\[
\begin{aligned}
\int_{\mathbb R}|S_\xi||v||v_\xi|\,d\xi
&\le
C\left(Z_S(t)+U_S(t)\right)^{1/2}
\left(\int_{\mathbb R}|S_\xi||v_\xi|^2d\xi\right)^{1/2}\\
&\le
C\delta_S
\left(Z_S(t)+U_S(t)\right)^{1/2}
\mathcal E_q(t)^{1/2}.
\end{aligned}
\]
Therefore,
\begin{equation}\label{K7-L36}
\left|
\int_{\mathbb R}\tilde n_\xi v\,v_\xi\,d\xi
\right|
\le
\varepsilon\mathcal E_q(t)
+C_\varepsilon\delta_R\mathcal G_R(t)
+C_\varepsilon\delta_S^2\left(Z_S(t)+U_S(t)\right).
\end{equation}

Finally, $F_1=-N_{\xi\xi}$ and the interaction source $F_2$ satisfies
the estimate used in Lemma \ref{4.First-order estimate}.  Hence
\begin{align}
\left|\int_{\mathbb R}F_1v_\xi\,d\xi\right|
&\le
\varepsilon\mathcal E_q(t)
+C_\varepsilon\|N_{\xi\xi}(t)\|^2,
\label{K8-L36}\\
\left|\int_{\mathbb R}F_2v_\xi\,d\xi\right|
&\le
\varepsilon\mathcal E_q(t)
+C_\varepsilon\mathcal I_{RS}(t)^2.
\label{K9-L36}
\end{align}

Substituting
\eqref{K12-L36}--\eqref{K9-L36} into \eqref{q-qxi}, first choosing
$\varepsilon>0$ sufficiently small and then taking $\chi_1$ sufficiently
small, gives
\begin{equation}\label{vq-diff-L36}
\begin{aligned}
c\mathcal E_q(t)
\le{}&
\frac{d}{dt}\int_{\mathbb R}u\,v_\xi\,d\xi
+C\|u_\xi(t)\|^2
+C\mathbf D_1(t)\\
&+C\delta_S^2|\dot{\mathbf X}(t)|^2
+C\delta_R\mathcal G_R(t)\\
&+C\delta_S^2\left(Z_S(t)+U_S(t)\right)
+C\|N_{\xi\xi}(t)\|^2
+C\mathcal I_{RS}(t)^2.
\end{aligned}
\end{equation}

We now integrate \eqref{vq-diff-L36} over $[0,t]$.  For the boundary
term, Young's inequality gives, for any fixed $\eta>0$,
\begin{equation}\label{boundary-L36}
\begin{aligned}
\left|\int_{\mathbb R}u(t)v_\xi(t)d\xi\right|
+\left|\int_{\mathbb R}u(0)v_\xi(0)d\xi\right|
\le{}&
\eta\|v_\xi(t)\|^2
+C_\eta\|u(t)\|^2\\
&+C\left(
\|u(0)\|_{H^1}^2+\|v(0)\|_{H^1}^2
\right).
\end{aligned}
\end{equation}
By Lemma \ref{4.L2 estimate},
\begin{equation}\label{L2-use-L36}
\begin{aligned}
&\|u(t)\|^2
+\int_0^t\|u_\xi(\tau)\|^2d\tau
+\delta_S\int_0^t|\dot{\mathbf X}(\tau)|^2d\tau\\
&\quad
+\int_0^t U_S(\tau)d\tau
+\int_0^t\mathcal G_R(\tau)d\tau
+\frac{\lambda}{\delta_S}
\int_0^t Z_S(\tau)d\tau\\
&\le
C\left(
\|u(0)\|^2+\|v(0)\|^2
\right)+C\delta_R^{1/3}.
\end{aligned}
\end{equation}
Since $0<\delta_S\le1$ and $\lambda=\sqrt{\delta_S}$,
the terms
\[
\delta_S^2\int_0^t|\dot{\mathbf X}|^2,\qquad
\delta_S^2\int_0^t(U_S+Z_S),\qquad
\delta_R\int_0^t\mathcal G_R
\]
are all controlled by the right-hand side of \eqref{L2-use-L36}.
Moreover, \eqref{Nxx-L2time-L35} and \eqref{IRS-L2time-L35} give
\begin{equation}\label{sources-L36}
\int_0^t
\left(
\|N_{\xi\xi}(\tau)\|^2+\mathcal I_{RS}(\tau)^2
\right)d\tau
\le C\delta_R.
\end{equation}

Combining
\eqref{vq-diff-L36}--\eqref{sources-L36} yields
\begin{equation}\label{vq-before-L35-L36}
\begin{aligned}
\int_0^t\|v_\xi(\tau)\|^2d\tau
\le{}&
\eta\|v_\xi(t)\|^2
+C_\eta\|u(t)\|^2
+C\int_0^t\mathbf D_1(\tau)d\tau\\
&+C\left(
\|u(0)\|_{H^1}^2+\|v(0)\|_{H^1}^2
\right)
+C\delta_R^{1/3}.
\end{aligned}
\end{equation}
Notice that the third term on the right-hand side is
$\int_0^t\mathbf D_1(\tau)d\tau$, not the pointwise quantity
$\mathbf D_1(t)$.

We now use Lemma \ref{4.First-order estimate} in
\eqref{vq-before-L35-L36}.  Together with Lemma
\ref{4.L2 estimate}, it gives
\[
\begin{aligned}
\int_0^t\|v_\xi(\tau)\|^2d\tau
\le{}&
C\left(
\|u(0)\|_{H^1}^2+\|v(0)\|_{H^1}^2
\right)
+C\delta_R^{1/3}\\
&+
C(\chi_1+\delta_R+\delta_S)
\int_0^t\|v_\xi(\tau)\|^2d\tau.
\end{aligned}
\]
Taking $\delta_0$ and $\chi_1$ sufficiently small, we absorb the last
term and conclude that
\begin{equation}\label{4.divq-q}
\int_0^t\|\partial_\xi(q-\tilde q)(\cdot,\tau)\|^2d\tau
\le
C\left(
\|n_0-\tilde n(\cdot,0)\|_{H^1}^2
+\|q_0-\tilde q(\cdot,0)\|_{H^1}^2
\right)
+C\delta_R^{1/3}.
\end{equation}

Substituting \eqref{4.divq-q} back into
\eqref{4.First order estimate}, and using the uniform bounds for
$\tilde n$, we obtain
\begin{equation}\label{4.4.62}
\begin{aligned}
&\|u_\xi(t)\|^2+\|v_\xi(t)\|^2
+\int_0^t\|u_{\xi\xi}(\tau)\|^2d\tau\\
&\le
C\left(
\|u(0)\|_{H^1}^2+\|v(0)\|_{H^1}^2
\right)
+C\delta_R^{1/3}.
\end{aligned}
\end{equation}
Combining \eqref{4.divq-q} and \eqref{4.4.62} gives
\eqref{4.H1 estimate}.
\end{proof}

\begin{proof}[Proof of Proposition \ref{4 A priori estimates}]
The a priori estimate  \eqref{4.priori estimate} follows from Lemmas \ref{4.L2 estimate} and \ref{4.H1-estimate}. Moreover, in view of  \eqref{X}, one can see that
\begin{equation*}
|\dot{\mathbf{X}}(t)|
\leq \frac{C}{\delta_S}
\|n-\tilde{n}\|_{L^{\infty}}
\int_{\mathbb{R}}
\left|(\tilde{n}^S)^{-\mathbf{X}}_{\xi}\right|
d\xi
\leq C\|n-\tilde{n}^{-\mathbf{X}}\|_{L^{\infty}},
\end{equation*}
which implies \eqref{4.dotX}.
\end{proof}

\subsubsection{Proof of Theorem \ref{composite wave theorem}}
We now show how Proposition \ref{4 A priori estimates} implies
Theorem \ref{composite wave theorem}.
\begin{proof}
Let $\delta_0,\chi_1$ and $C_0$ be the constants in Proposition
\ref{4 A priori estimates}.  After increasing the constant if necessary,
the a priori estimate \eqref{4.priori estimate} implies
\begin{equation}\label{CE-Theorem}
\sup_{0\le \tau\le T}
\left\|(n-\tilde n^{-\mathbf X},q-\tilde q^{-\mathbf X})(\cdot,\tau)
\right\|_{H^1}
\le
C_E\left(
\|(n_0-\tilde n,q_0-\tilde q)\|_{H^1}
+\delta_R^{1/6}
\right)
\end{equation}
on every interval on which the bootstrap assumption
\eqref{4.assumption} holds, where $C_E\ge1$ is independent of
$\delta_R,\delta_S$ and $T$.

\emph{Step 1. Global existence and the solution space.}
Choose smooth monotone functions
$(\underline n,\underline q)$ as in Proposition \ref{Local existence}.
They may be chosen so that, for some constant $C_*>0$,
\begin{equation}\label{n-n+-}
\begin{aligned}
&\sum_{\pm}
\left(
\|\underline n-n_\pm\|_{L^2(\mathbb R_\pm)}
+\|\underline q-q_\pm\|_{L^2(\mathbb R_\pm)}
\right)
+\|\partial_x\underline n\|
+\|\partial_x\underline q\|\\
&\qquad\le
C_*(\delta_R+\delta_S).
\end{aligned}
\end{equation}
At $t=0$, Lemma \ref{properties of nRqR} and the shock estimates
\eqref{N''} give
\begin{equation}\label{initial-profile-halves}
\begin{aligned}
&\|\tilde n^R(0)-n_-\|_{L^2(\mathbb R_-)}^2
+\|\tilde n^R(0)-n_m\|_{L^2(\mathbb R_+)}^2
\le C\delta_R^2,\\
&\|\tilde n^S-n_m\|_{L^2(\mathbb R_-)}^2
+\|\tilde n^S-n_+\|_{L^2(\mathbb R_+)}^2
\le C\delta_S,
\end{aligned}
\end{equation}
and the analogous estimates hold for the $q$-components.  Therefore,
using the triangle inequality on each half-line rather than an equality
between squared norms,
\begin{equation}\label{initial-profile-L2}
\begin{aligned}
\|\underline n-\tilde n(\cdot,0)\|^2
&\le
C\Big(
\|\underline n-n_-\|_{L^2(\mathbb R_-)}^2
+\|\underline n-n_+\|_{L^2(\mathbb R_+)}^2\\
&\qquad
+\|\tilde n^R(0)-n_-\|_{L^2(\mathbb R_-)}^2
+\|\tilde n^R(0)-n_m\|_{L^2(\mathbb R_+)}^2\\
&\qquad
+\|\tilde n^S-n_m\|_{L^2(\mathbb R_-)}^2
+\|\tilde n^S-n_+\|_{L^2(\mathbb R_+)}^2
\Big)\\
&\le C(\delta_R^2+\delta_S).
\end{aligned}
\end{equation}
Moreover,
\[
\|\partial_x\tilde n^R(0)\|
+\|\partial_x\tilde q^R(0)\|
\le C\delta_R,
\qquad
\|\tilde n^S_\xi\|
+\|\tilde q^S_\xi\|
\le C\delta_S,
\]
where the last, slightly weaker, bound is sufficient for
$0<\delta_S\le1$.  Consequently, for some $C_1>0$,
\begin{equation}\label{n-n0H1}
\|\underline n-\tilde n(\cdot,0)\|_{H^1}
+\|\underline q-\tilde q(\cdot,0)\|_{H^1}
\le
C_1(\delta_R+\sqrt{\delta_S}).
\end{equation}

We now choose the small constants uniformly.  First decrease $\delta_0$
so that
\begin{equation}\label{small-delta-theorem}
C_E\delta_0^{1/6}\le\frac{\chi_1}{8},
\qquad
2C_*\delta_0+C_1(\delta_0+\sqrt{\delta_0})
\le\frac{\chi_1}{16C_E}.
\end{equation}
Then fix
\begin{equation}\label{chi0-theorem}
\chi_0:=\frac{\chi_1}{16C_E}.
\end{equation}
In particular, $\chi_0$ depends only on the constants in the theorem and
not on the particular values of $\delta_R,\delta_S\in(0,\delta_0)$.

Let $(n_0,q_0)$ satisfy \eqref{initial data}.  From
\eqref{n-n+-},
\begin{equation}\label{n0-n}
\|n_0-\underline n\|_{H^1}
+\|q_0-\underline q\|_{H^1}
\le
\chi_0+C_*(\delta_R+\delta_S).
\end{equation}
After decreasing $\delta_0$ once more if necessary, Sobolev embedding
and \eqref{n0-n} yield the uniform positive bounds
\begin{equation}\label{n0-positive}
\frac12\min\{n_-,n_+\}
\le n_0(x)\le
2\max\{n_-,n_+\},
\qquad x\in\mathbb R.
\end{equation}
Thus Proposition \ref{Local existence} gives a unique local solution.

Furthermore, by \eqref{n-n0H1}, \eqref{n0-n}, and
\eqref{small-delta-theorem}--\eqref{chi0-theorem},
\begin{equation}\label{initial-composite-small}
\begin{aligned}
&\|n_0-\tilde n(\cdot,0)\|_{H^1}
+\|q_0-\tilde q(\cdot,0)\|_{H^1}\\
&\qquad\le
\chi_0+C_*(\delta_R+\delta_S)
+C_1(\delta_R+\sqrt{\delta_S})
\le\frac{\chi_1}{8C_E}.
\end{aligned}
\end{equation}
Since the local solution and the smooth composite profile
$(\tilde n^{-\mathbf X},\tilde q^{-\mathbf X})$ are continuous in
$H^1$ at $t=0$, there exists $T_1>0$ such that
\begin{equation}\label{initial-bootstrap-theorem}
\sup_{0\le t\le T_1}
\|(n-\tilde n^{-\mathbf X},q-\tilde q^{-\mathbf X})(\cdot,t)\|_{H^1}
<\chi_1.
\end{equation}

We now apply the standard continuation argument.  Let $T_M$ be the
supremum of times up to which the solution can be continued while
satisfying the bootstrap bound in \eqref{4.assumption}.  If $T_M<\infty$,
then for every $T<T_M$, Proposition \ref{4 A priori estimates},
\eqref{CE-Theorem}, \eqref{initial-composite-small}, and
\eqref{small-delta-theorem} give
\[
\sup_{0\le t\le T}
\|(n-\tilde n^{-\mathbf X},q-\tilde q^{-\mathbf X})(\cdot,t)\|_{H^1}
\le
C_E\left(\frac{\chi_1}{8C_E}+\delta_R^{1/6}\right)
\le\frac{\chi_1}{4}.
\]
By continuity, the same strict bound holds up to $T_M$.  Since the
composite profile is uniformly bounded away from zero, Sobolev embedding
then keeps $n$ uniformly positive.  Proposition \ref{Local existence}
therefore continues the solution beyond $T_M$, still with the norm
strictly below $\chi_1$, which contradicts the definition of $T_M$.
Hence the solution exists globally and the bootstrap assumption holds
for all time.

Letting $T\to\infty$ in Proposition \ref{4 A priori estimates}, we obtain
\begin{equation}\label{global energy}
\begin{aligned}
&\sup_{t\ge0}
\left(
\|n-\tilde n^{-\mathbf X}\|_{H^1}^2
+\|q-\tilde q^{-\mathbf X}\|_{H^1}^2
\right)
+\delta_S\int_0^\infty|\dot{\mathbf X}(t)|^2dt\\
&\quad+
\int_0^\infty
\left(
\|\partial_\xi(n-\tilde n^{-\mathbf X})(t)\|_{H^1}^2
+\|\partial_\xi(q-\tilde q^{-\mathbf X})(t)\|^2
\right)dt\\
&\le
C\|(n_0-\tilde n,q_0-\tilde q)\|_{H^1}^2
+C\delta_R^{1/3}.
\end{aligned}
\end{equation}
Moreover,
\begin{equation}\label{4.dotXt}
|\dot{\mathbf X}(t)|
\le
C\|(n-\tilde n^{-\mathbf X})(\cdot,t)\|_{L^\infty},
\qquad t>0.
\end{equation}

The estimates above are formulated with the smooth rarefaction profile.
To obtain the solution space stated in Theorem
\ref{composite wave theorem}, fix $0<t_0<T<\infty$.  By Lemma
\ref{lem2.1}-(3),
\begin{equation}\label{exact-smooth-H1}
(\tilde n^R,\tilde q^R)(\cdot,t)
-(n^r,q^r)(\cdot,t)
\in C([t_0,T];H^1(\mathbb R)).
\end{equation}
Here the restriction $t_0>0$ is essential: the derivative of the exact
self-similar rarefaction is of size $O(t^{-1})$ inside a fan of length
$O(t)$ and is not uniformly $H^1$-regular as $t\downarrow0$.
Combining \eqref{exact-smooth-H1} with the global smooth-profile
perturbation gives
\[
\begin{aligned}
&n(t,x)-\left(
n^r(x/t)+\tilde n^S(x-\sigma t-\mathbf X(t))-n_m
\right)
\in C((0,\infty);H^1(\mathbb R)),\\
&q(t,x)-\left(
q^r(x/t)+\tilde q^S(x-\sigma t-\mathbf X(t))-q_m
\right)
\in C((0,\infty);H^1(\mathbb R)).
\end{aligned}
\]
Finally, \eqref{global energy} gives
$(n-\tilde n^{-\mathbf X})_{\xi\xi}\in
L^2(0,\infty;L^2)$, while Lemma
\ref{properties of nRqR} yields
$(\tilde n^R)_{\xi\xi}\in L^2(0,\infty;L^2)$.  Hence
\[
n_{xx}(t,x)
-\tilde n^S_{xx}(x-\sigma t-\mathbf X(t))
\in L^2(0,\infty;L^2(\mathbb R)),
\]
which completes the proof of \eqref{1D solution space}.

\emph{Step 2. Large-time behavior.}
Set
\[
\phi:=n-\tilde n^{-\mathbf X},
\qquad
\psi:=q-\tilde q^{-\mathbf X},
\qquad
g(t):=\|\phi_\xi(t)\|^2+\|\psi_\xi(t)\|^2.
\]
By \eqref{global energy},
\begin{equation}\label{g-L1-theorem}
\int_0^\infty g(t)\,dt<\infty.
\end{equation}
We next prove
\begin{equation}\label{gprime-L1-theorem}
\int_0^\infty|g'(t)|\,dt<\infty.
\end{equation}

For a.e. $t>0$,
\[
\frac12g'(t)
=
\int_{\mathbb R}\phi_\xi\phi_{\xi t}\,d\xi
+\int_{\mathbb R}\psi_\xi\psi_{\xi t}\,d\xi.
\]
We first consider the $\phi$-term.  From the first equation of
\eqref{4.perturbed system},
\[
\phi_t
=
\sigma\phi_\xi
+\dot{\mathbf X}S_\xi
+(nq-\tilde n\tilde q)_\xi
+\phi_{\xi\xi}
-(F_1+F_2).
\]
Therefore, integrating by parts in $\xi$ and observing that
$\int\phi_{\xi\xi}\phi_\xi=0$,
\begin{equation}\label{phi-gprime-identity}
\begin{aligned}
\left|\int_{\mathbb R}\phi_\xi\phi_{\xi t}\,d\xi\right|
\le{}&
|\dot{\mathbf X}|\|S_\xi\|\,\|\phi_{\xi\xi}\|
+\|\phi_{\xi\xi}\|^2\\
&+\|\phi_{\xi\xi}\|
\|(nq-\tilde n\tilde q)_\xi\|
+\|\phi_{\xi\xi}\|\,\|F_1+F_2\|.
\end{aligned}
\end{equation}
Using
\[
(nq-\tilde n\tilde q)_\xi
=
\tilde n\,\psi_\xi+\tilde n_\xi\psi
+q\,\phi_\xi+\phi\,q_\xi,
\qquad
q_\xi=\psi_\xi+\tilde q_\xi,
\]
the a priori $H^1$ bound and
$\|\tilde n_\xi\|_\infty+\|\tilde q_\xi\|_\infty
\le C(\delta_R+\delta_S)$ give
\begin{equation}\label{flux-gradient-theorem}
\begin{aligned}
\|(nq-\tilde n\tilde q)_\xi\|^2
\le{}&
C\big(\|\phi_\xi\|^2+\|\psi_\xi\|^2\big)\\
&+C(\delta_R+\delta_S)
\int_{\mathbb R}
\big(|N_\xi|+|S_\xi|\big)
\big(|\phi|^2+|\psi|^2\big)\,d\xi.
\end{aligned}
\end{equation}
The weighted term is integrable in time.  Indeed, by the rarefaction
dissipation and the shock dissipation in Lemma \ref{4.L2 estimate},
\begin{equation}\label{weighted-profile-theorem}
\begin{aligned}
\int_{\mathbb R}|N_\xi|(|\phi|^2+|\psi|^2)\,d\xi
&\le C\mathcal G_R(t),\\
\int_{\mathbb R}|S_\xi|(|\phi|^2+|\psi|^2)\,d\xi
&\le C\big(U_S(t)+Z_S(t)\big),
\end{aligned}
\end{equation}
where
\[
U_S(t):=\int_{\mathbb R}|S_\xi||\phi|^2d\xi,
\qquad
Z_S(t):=\int_{\mathbb R}|S_\xi|
|\psi+\varphi(n)|^2d\xi.
\]
Furthermore,
\[
\|F_1(t)\|=\|N_{\xi\xi}(t)\|,
\qquad
\|F_2(t)\|\le C\mathcal I_{RS}(t).
\]
By \eqref{Nxx-L2time-L35} and the pointwise interaction estimate
\eqref{nSnR},
\begin{equation}\label{F1F2-theorem}
\int_0^\infty\|F_1(t)\|^2dt\le C\delta_R,
\qquad
\int_0^\infty\|F_2(t)\|^2dt
\le C\delta_R^2\delta_S.
\end{equation}
Also $\|S_\xi\|^2\le C\delta_S^3$.  Hence
\eqref{global energy}, Lemma \ref{4.L2 estimate},
\eqref{phi-gprime-identity}--\eqref{F1F2-theorem}, and Young's
inequality imply
\begin{equation}\label{4.Phix2t}
\int_0^\infty
\left|
\int_{\mathbb R}\phi_\xi\phi_{\xi t}\,d\xi
\right|dt
<\infty.
\end{equation}

We next treat the $\psi$-term.  Differentiating the second equation of
\eqref{4.perturbed system} in $\xi$ gives
\begin{equation}\label{psi-xit-correct-theorem}
\psi_{\xi t}
=
\sigma\psi_{\xi\xi}
+\dot{\mathbf X}T_{\xi\xi}
+\phi_{\xi\xi}.
\end{equation}
Therefore
\begin{equation}\label{psi-gprime-identity}
\begin{aligned}
\int_{\mathbb R}\psi_\xi\psi_{\xi t}\,d\xi
&=
\dot{\mathbf X}
\int_{\mathbb R}\psi_\xi T_{\xi\xi}\,d\xi
+\int_{\mathbb R}\psi_\xi\phi_{\xi\xi}\,d\xi,
\end{aligned}
\end{equation}
because
$\int\psi_\xi\psi_{\xi\xi}\,d\xi=0$.  By the shock profile estimates,
\[
\|T_{\xi\xi}\|^2\le C\delta_S^5.
\]
Thus Cauchy--Schwarz in time and \eqref{global energy} yield
\begin{equation}\label{psixi}
\begin{aligned}
&\int_0^\infty
\left|
\int_{\mathbb R}\psi_\xi\psi_{\xi t}\,d\xi
\right|dt\\
&\quad\le
C\delta_S^{5/2}
\left(\int_0^\infty|\dot{\mathbf X}|^2dt\right)^{1/2}
\left(\int_0^\infty\|\psi_\xi\|^2dt\right)^{1/2}\\
&\qquad
+\left(\int_0^\infty\|\psi_\xi\|^2dt\right)^{1/2}
\left(\int_0^\infty\|\phi_{\xi\xi}\|^2dt\right)^{1/2}
<\infty.
\end{aligned}
\end{equation}
Indeed, the first term is finite because
$\delta_S\int_0^\infty|\dot{\mathbf X}|^2dt<\infty$.

Combining \eqref{4.Phix2t} and \eqref{psixi} gives
\eqref{gprime-L1-theorem}.  Hence
\[
g\in W^{1,1}(0,\infty)\cap L^1(0,\infty).
\]
Thus $g(t)$ has a finite limit as $t\to\infty$, and
\eqref{g-L1-theorem} forces this limit to be zero:
\begin{equation}\label{4.limphix}
\lim_{t\to\infty}
\|(\phi_\xi,\psi_\xi)(\cdot,t)\|^2=0.
\end{equation}
The one-dimensional Sobolev inequality and the uniform $L^2$ bound from
\eqref{global energy} now imply
\begin{equation}\label{4.Sobolev's inequality}
\begin{aligned}
\sup_{\xi\in\mathbb R}|(\phi,\psi)(\xi,t)|^2
&\le
2\left(
\|\phi(t)\|\,\|\phi_\xi(t)\|
+\|\psi(t)\|\,\|\psi_\xi(t)\|
\right)
\longrightarrow0.
\end{aligned}
\end{equation}
By Lemma \ref{lem2.1}-(2), the smooth rarefaction converges uniformly to
the exact self-similar rarefaction.  Therefore
\eqref{4.Sobolev's inequality} implies the asymptotic statement
\eqref{Limsup}.

Finally, \eqref{4.dotXt} and \eqref{4.Sobolev's inequality} yield
\[
\dot{\mathbf X}(t)\longrightarrow0.
\]
Since $\mathbf X$ is absolutely continuous,
\[
\frac{\mathbf X(t)}{t}
=
\frac{\mathbf X(0)}{t}
+\frac1t\int_0^t\dot{\mathbf X}(s)\,ds.
\]
The Ces\`aro mean of a function converging to zero also converges to zero;
hence
\[
\lim_{t\to\infty}\frac{\mathbf X(t)}{t}=0.
\]
This proves \eqref{LimX'} and completes the proof of Theorem
\ref{composite wave theorem}.
\end{proof}

\section*{Acknowledgements}
This work is supported by the National Natural Science Foundation of China (No. 12371216).

\end{document}